\documentclass[11pt]{amsart}
\usepackage{amsmath,amsthm,amsfonts,amssymb,bbm, setspace,graphicx,float,subfigure,xcolor,mathrsfs,mathtools, Style}
\usepackage{todonotes}
\usepackage[T1]{fontenc}
\usepackage[utf8]{inputenc}
\usepackage{enumitem}
\setlist[enumerate,1]{label={(\roman*)}}

\begin{document}

\author{Lasse L. Wolf}
\address{Institut des Hautes Études Scientifiques, 35 Route de Chartres,
91440 Bures-sur-Yvette} \email{\href{mailto:wolf@ihes.fr}{wolf@ihes.fr}}

\newcommand{\revision}[1]{{\color{blue} #1}}
\newcommand{\Lasse}[1]{{\color{red} \sf $\clubsuit\clubsuit\clubsuit$ Lasse: [#1]}}

\title[Spectra of elliptic invariant DO]{Spectra of elliptic invariant differential operators
on locally symmetric spaces}

\begin{abstract}
	For a locally symmetric space $\Gamma\bk G /K$ we study the spectrum of the invariant differential operators of $G/K$ on $L^2(\Gamma\bk G/K)$.
	This gives a purely analytical proof of the recent theorem
	\cite[Thm.~1.4]{LWW}
	that the quasiregular representation $L^2(\Gamma \bk G)$
	is tempered
	if the limit cone $\mathcal{L}_\Gamma$ of $\Gamma$ is contained in
	the interior of the Weyl chamber
	except if $G$ has real rank one
	or is locally isomorphic to $\mathfrak{sl}_3(\mathbb K)$, $\K=\R,\C,\mathbb H$, or $\mathfrak{e}_6^{-26}$.
\end{abstract}

\subjclass[2020]{22E40, 22E46, 58C40}

\setcounter{tocdepth}{1} 

\maketitle

\section{Introduction}
Let $G$ be semisimple Lie group with finite center,
$K$ a maximal compact subgroup of $G$,
and $G=K\exp(\mathfrak{a}_+)K$ a Cartan decomposition,
where $\mathfrak{a}_+ \subseteq \mathfrak{a}\simeq \R^r$
is a (closed) positive Weyl chamber.
Let $\mu_+ \colon G \to \mathfrak{a}_+$ be the Cartan projection,
$W$ the Weyl group,
and $\mathfrak{a}_+^\ast$ the Weyl chamber in $\mathfrak{a}^\ast$
corresponding to $\mathfrak{a}_+$.
For $\Gamma$ a discrete subgroup of $G$
we denote by $\wt \sigma$ the joint spectrum of the algebra $\mathbb{D}(G/K)$
acting on $L^2(\Gamma \backslash G/K)$.
Here, $\wt \sigma$ is contained in $\mathfrak{a}^\ast_\C$,
the complexified dual of $\mathfrak{a}$.
Furthermore, let $\rho\in \mathfrak{a}^\ast_+$ be the half sum of positive restricted roots
counted with multiplicities.
For $\Gamma$ Zariski dense we consider the modified critical exponents%
\footnote{
	In \cite{LWW} the definition of $\delta'_\lambda$ is given in terms
	of the growth indicator function.
	The two definitions agree if $\Gamma$ is Zariski dense or if $\lambda$ is positive on the limit cone $\mathcal{L}_\Gamma$.}
\[
	\delta'_\lambda \coloneqq
	\inf\left\{s\in \R\colon
	\sum_{\gamma\in \Gamma} e^{-(s\lambda+\rho)\mu_+(\gamma)}<\infty\right\},
	\quad \lambda\in \mathfrak{a}^\ast_+.
\]

Recently, C. Lutsko and T. Weich together with the author
proved the following relation between the joint spectrum $\wt \sigma$
and the critical exponents of $\Gamma$.
We will here present a purely analytic proof of their result.
{\renewcommand{\thetheorem}{A}
	\begin{theorem}	[{\cite[Thm.~1.1]{LWW}}]
		\label{thmA}
		Let
		\[
			\theta_\lambda \coloneqq
			\inf \{\theta \geq 0\colon \Re \wt \sigma \subseteq \theta \operatorname{conv} (W\lambda)\},
			\quad \lambda\in \mathfrak{a}^\ast.
		\]
		Then,
		if $\lambda\in \mathfrak{a}_+^\ast$ with $-\lambda \in W\lambda$,
		it holds
		\[
			\theta_\lambda = \max(0,\delta_\lambda').
		\]
	\end{theorem}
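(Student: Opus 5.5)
We need to prove the two inequalities $\theta_\lambda\le\max(0,\delta'_\lambda)$ and $\theta_\lambda\ge\max(0,\delta'_\lambda)$ by purely analytic means. Throughout we fix $\lambda\in\mathfrak{a}^\ast_+$ with $-\lambda\in W\lambda$, so that $\operatorname{conv}(W\lambda)$ is symmetric under $\nu\mapsto-\nu$.

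\emph{Upper bound.} The plan is to use resolvent-type operators, i.e.\ convolution by bi-$K$-invariant functions. For a bi-$K$-invariant kernel $k$ on $G$, the operator $f\mapsto f\ast k$ acts on a joint eigenfunction with spectral parameter $\nu\in\wt\sigma$ by the spherical transform $\hat k(\nu)$. I would choose $k_s(g)=e^{-(s\lambda+\rho)\mu_+(g)}$, possibly smoothed and cut off near the identity.

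The first step is to show that for $s>\delta'_\lambda$ the operator of convolution with $k_s$ is bounded on $L^2(\Gamma\bk G/K)$. I would do this by a Schur test. After unfolding, the row sums are $\sum_{\gamma}k_s(x^{-1}\gamma y)$. Using $\mu_+(x^{-1}\gamma y)\approx\mu_+(\gamma)$ up to a bounded error on compact sets, these are controlled by the Poincaré series, which converges precisely when $s>\delta'_\lambda$.

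The second step uses the Harish-Chandra expansion of $\varphi_\nu$ to show that $\hat k_s(\nu)$ is given by an integral $\int_{\mathfrak{a}_+}e^{(\nu-\rho)(H)}e^{-(s\lambda+\rho)(H)}\delta(H)\,dH$. This integral converges exactly when $\max_w\Re(w\nu)(H)<s\lambda(H)$ for all $H\in\mathfrak{a}_+$, which is the condition $\Re\nu\in s\operatorname{conv}(W\lambda)$ (interior). At the boundary it blows up.

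The third step combines these. If some $\nu\in\wt\sigma$ had $\Re\nu\notin s\operatorname{conv}(W\lambda)$ for some $s>\max(0,\delta'_\lambda)$, I would approximate $\nu$ by Weyl-sequence eigen-packets. Taking $s'\downarrow s$, the transform $\hat k_{s'}(\nu)$ would be unbounded while the operator norms stay bounded, which is a contradiction. Some care is needed here: approximate eigenfunctions only give $|\hat k(\nu)|\le\|T\|$ for $\nu$ in the spectrum, and this requires $\hat k$ continuous on the relevant region. To handle this I would use holomorphy in $s$ and a contour or analytic-continuation argument. The symmetry $-\lambda\in W\lambda$ is used to ensure that both $\nu$ and $-\nu$ (the adjoint side) are governed by the same convex set.

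\emph{Lower bound.} $\theta_\lambda\ge0$ is trivial. For $\theta_\lambda\ge\delta'_\lambda$, I would argue that if $\Re\wt\sigma\subseteq\theta\operatorname{conv}(W\lambda)$, then the spectral theorem gives decay of matrix coefficients. Concretely, for $f$ compactly supported on $\Gamma\bk G/K$, the pairing $\langle f, f\ast k\rangle$ is finite for kernels $k$ decaying like $e^{-(s\lambda+\rho)\mu_+}$ with $s>\theta$. This is because $\hat k$ is then bounded on the spectrum: using Paley–Wiener-type bounds, $|\varphi_\nu(\exp H)|\le e^{(\max_w\Re w\nu-\rho)(H)}\varphi_0$-like, and $\max_w\Re w\nu(H)\le\theta\lambda(H)$. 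For nonnegative $f$ supported near the identity coset, unfolding the pairing gives $\sum_\gamma k(\gamma)$ up to constants. Finiteness of this sum forces $s\ge\delta'_\lambda$, so $\theta\ge\delta'_\lambda$.

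The main obstacle is the upper-bound passage from bounded operator norm to the absence of spectrum outside $s\operatorname{conv}(W\lambda)$. This requires sharp two-sided asymptotics of $\hat k_s(\nu)$ near the boundary of the convex set, uniformly in $\operatorname{Im}\nu$, which is where the Harish-Chandra $c$-function and the hypothesis $-\lambda\in W\lambda$ enter. The direction I would try first is a one-parameter estimate along rays, where the integral reduces to Laplace transforms $\int_0^\infty e^{-(s-t)\lambda(H)}\,dt$.
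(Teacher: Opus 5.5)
Your upper bound breaks down at its first step. A Schur test on $\Gamma\backslash G/K$ needs $\sup_x\int_{\Gamma\backslash G/K}\sum_{\gamma\in\Gamma}k_s(x^{-1}\gamma y)\,dy$. Unfolding this integral gives $\int_{G/K}k_s=\int_{\mathfrak{a}_+}e^{-(s\lambda+\rho)(H)}\delta(H)\,dH$ with $\delta(H)\asymp e^{2\rho(H)}$. This is finite only if $s\lambda>\rho$ on $\mathfrak{a}_+\setminus\{0\}$, a condition that does not involve $\Gamma$ at all. The Poincar\'e series controls $\sum_\gamma k_s(x^{-1}\gamma y)$ only pointwise, for $x,y$ in compact sets, and on an infinite-volume quotient that gives no $L^2$ bound. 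In fact, boundedness of $f\mapsto f\ast k_s$ for all $s>\delta'_\lambda$ is, via the spherical transform, essentially equivalent to the inclusion you want to prove. A weighted Schur test with a positive $\Gamma$-invariant joint eigenfunction might work, but producing one requires higher-rank Patterson--Sullivan theory.

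The paper meets exactly this obstacle (see the start of the proof of Lemma~\ref{la:specfromestimate}) and sidesteps it. It uses only resolvents of self-adjoint elliptic $D_p$, which are bounded off $\mathbb{R}$ for free, together with Stone's formula. Then convergence of $\sum_\gamma k(D_p,z,g^{-1}\gamma h)$ tested against compactly supported functions already forces the spectral projector near $z$ to vanish. The kernel decay comes from a contour shift in the Fourier--Helgason inversion formula (Proposition~\ref{prop:decaykernel}). The individual spectra are then assembled into $\widetilde{\sigma}$ via Lemma~\ref{la:defjointspectrumelliptic} and the separation of tubes $\mathfrak{a}^\ast+iC$ by real elliptic $W$-invariant polynomials built from ellipsoids (Proposition~\ref{prop:sepinPR}, Theorem~\ref{thm:realpolynomialhull}). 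That is where $-\lambda\in W\lambda$ really enters, since real polynomials only see $\operatorname{conv}(W\lambda\cup W(-\lambda))$.

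Your third step is also left open. It needs $|\hat k(\nu)|\le\|T_k\|$ at non-unitary $\nu\in\widetilde{\sigma}$, which presupposes a spectral theory identifying $\widetilde{\sigma}$ with a Gelfand spectrum. It also needs a lower bound on $|\hat k_{s'}(\nu)|$ that excludes cancellation in the oscillating $\phi_\nu$.

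Your lower bound follows the route of \cite{LWW}: spectral bound, then a bound on $\langle f,f\ast k_s\rangle$, then convergence of the Poincar\'e series by unfolding. This differs from the paper, which obtains optimality for Zariski dense $\Gamma$ from the sharp bottom of the Laplace spectrum in \cite{AZ22} and the convexity of $R'$ (Proposition~\ref{prop:R'inposchamber}), reducing everything to $\lambda=\mu_\Gamma$. Your route would avoid Zariski density. However, it requires a joint spectral measure on $\widetilde{\sigma}$ under which convolution by a non-integrable positive bi-$K$-invariant kernel acts by $\hat k$, e.g.\ Gelfand theory of $L^1(K\backslash G/K)$ plus monotone approximation. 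That is precisely the representation-theoretic input the paper is designed to avoid, and you only gesture at it.

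There is a further problem in both halves when $\lambda$ is singular, which happens, e.g., whenever $w_0=-1$ and the rank is at least $2$. Then $\lambda$ vanishes on an edge of $\mathfrak{a}_+$, so the integral defining $\hat k_s(\nu)$ is not absolutely convergent for any $\nu$; it is $+\infty$ already at $\nu=0$. Your characterization of convergence by $\Re\nu\in s\operatorname{conv}(W\lambda)$ is therefore false in this case, and $\hat k_s$ is not bounded on the spectrum. You would have to truncate $k_s$ to a closed cone around $\mathcal{L}_\Gamma$ on which $\lambda>0$. That requires $\lambda$ to be positive on $\mathcal{L}_\Gamma$, which the paper's standing assumption $\mathcal{L}_\Gamma\subseteq\mathfrak{a}_{++}$ guarantees.
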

}

This theorem is then used to (almost completely) prove a conjecture of Kim, Minsky, and Oh \cite{KMO24}.
{\renewcommand{\thetheorem}{B}
	\begin{theorem}
		[{\cite[Thm.~1.4]{LWW}}]
		\label{thmB}
		Assume that $G$ has real rank at least $2$
		and is not locally isomorphic
		to $\mathfrak{sl}_3(\mathbb{K})$, $\mathbb{K} = \R,\C, \mathbb{H}$,
		or $\mathfrak{e}_6^{-26}$.
		If the limit cone
		\begin{align*}
			\mathcal{L}_\Gamma = 
			\{v\in \mathfrak{a} \colon
				|\mu_+(\Gamma) \cap \mathcal{C}| = \infty
				\text{ for all open cones } \mathcal{C} \subseteq \mathfrak{a}_+
			\text{ containing } v\}
		\end{align*}
		is contained in the interior $\mathfrak{a}_{++}$ of $\mathfrak{a}_+$,
		then
		\[
			\wt \sigma \subseteq i \mathfrak{a}^\ast.
		\]
	\end{theorem}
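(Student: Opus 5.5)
Theorem~\ref{thmB} will be deduced from Theorem~\ref{thmA}. Since $\Re\wt\sigma$ is a $W$-invariant set (the joint spectrum is parametrized by $W$-orbits in $\mathfrak{a}^\ast_\C$), it suffices to show $\theta_\lambda = 0$ for enough $\lambda\in\mathfrak{a}^\ast_+$ with $-\lambda\in W\lambda$. Then $\Re\wt\sigma\subseteq\bigcap_\lambda \theta\operatorname{conv}(W\lambda)$ for every $\theta>0$, hence $\Re\wt\sigma=\{0\}$. More precisely, if we find a family of such $\lambda$ whose dual cones intersect only in $0$, for instance if the $\lambda$ span $\mathfrak{a}^\ast$, then $\bigcap_\lambda \operatorname{conv}(W\lambda)$ is a bounded neighbourhood of $0$. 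Scaling $\theta\to0$ then forces $\Re\wt\sigma=0$.

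The plan is as follows. First, we use the hypothesis $\mathcal{L}_\Gamma\subseteq\mathfrak{a}_{++}$ to bound $\delta'_\lambda$. Since $\mathcal{L}_\Gamma$ is a closed cone in $\mathfrak{a}_{++}$, compactness of its intersection with the unit sphere gives $c>0$ such that all but finitely many $\gamma$ satisfy $\alpha(\mu_+(\gamma))\ge c\|\mu_+(\gamma)\|$ for every simple root $\alpha$. Moreover, the sum $\sum_\gamma e^{-\rho\mu_+(\gamma)}$ converges or has exponent at most $\rho$ via the standard bound $\delta_\Gamma\le$ critical exponent for $\rho$ (the growth indicator satisfies $\psi_\Gamma\le 2\rho$; this is Quint's theorem). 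For $\lambda$ strictly positive on $\mathcal{L}_\Gamma$, the footnote ensures $\delta'_\lambda$ is the growth-indicator version. We want $\delta'_\lambda\le 0$, i.e.
\[
\sum_{\gamma} e^{-(s\lambda+\rho)\mu_+(\gamma)}<\infty \quad\text{for all } s>0.
\]
This reduces to the inequality $\psi_\Gamma\le\rho$ on $\mathcal{L}_\Gamma$. That inequality is the key input: for Zariski dense $\Gamma$ with limit cone in the interior, the stronger bound $\psi_\Gamma(v)\le \rho(v)$ (in fact $\le 2\rho-\alpha$-type estimates) should hold. This is where the rank and exclusion hypotheses enter, via a comparison of $2\rho$ minus the largest "Oh-type" gap with $\rho$. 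Concretely, I would use the bound $\psi_\Gamma\le 2\rho - \Theta$, where $\Theta$ is half the sum of roots in a root subsystem, valid on $\mathfrak{a}_{++}$. The excluded groups are exactly those where this fails to give $\le\rho$.

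Next, for each $\lambda\in\mathfrak{a}^\ast_+$ with $-\lambda\in W\lambda$, in particular $\lambda=\rho$ and the fundamental weights fixed by $-w_0$, we conclude $\theta_\lambda=\max(0,\delta'_\lambda)=0$ by Theorem~\ref{thmA}. If $-w_0=\mathrm{id}$, all of $\mathfrak{a}^\ast_+$ qualifies. Otherwise we use the $-w_0$-symmetrized weights $\omega+(-w_0\omega)$, which still span the $(-w_0)$-fixed subspace. The complementary part must be handled separately, using that $\Re\wt\sigma$ is invariant under $\nu\mapsto -w_0\nu$ by self-adjointness (complex conjugation). Finally, we combine these to get $\Re\wt\sigma=0$.

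The main obstacle is the growth-indicator estimate $\psi_\Gamma\le\rho$ on the interior cone, together with verifying exactly which Lie types allow it. I would attempt it analytically: pick a root subgroup and use the decay of matrix coefficients (Kazhdan property (T)/Oh's uniform bounds) on a $K$-finite function. This yields a bound on the orbit counting in cones, via $e^{-\rho}$ times the decay of the slowest-decaying matrix coefficient.
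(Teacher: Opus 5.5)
\textbf{There is a genuine gap, and it is exactly at the step you call the key input.}

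By Theorem~A, $\theta_\rho=\max(0,\delta'_\rho)$, and $\theta_\rho=0$ already forces $\Re\wt\sigma=\{0\}$. In fact any single nonzero Hermitian $\lambda$ suffices, since $\bigcap_{\theta>0}\theta\operatorname{conv}(W\lambda)=\{0\}$, so your discussion of spanning families and of the complement of $\mathfrak{a}^\ast_\iota$ is unnecessary. Conversely, $\wt\sigma\subseteq i\mathfrak{a}^\ast$ is equivalent to $\rho\in R$, i.e.\ to $\psi_\Gamma\le\rho$. So the inequality $\psi_\Gamma\le\rho$ on $\mathcal{L}_\Gamma$ is not an auxiliary estimate: it is a reformulation of Theorem~B itself, and your proposal gives no argument for it.

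The tool you suggest cannot supply one. The property (T) bounds $\psi_\Gamma\le 2\rho-\Theta$ (Oh, Quint) hold uniformly for (Zariski dense) discrete subgroups of infinite covolume and do not see $\mathcal{L}_\Gamma$ at all. Moreover, $\Theta$ is half a sum over a strongly orthogonal family of at most real-rank many roots. Hence for simple $G$ of real rank at least $2$, $\rho-\Theta$ is a nonzero nonnegative combination of positive roots, so $2\rho-\Theta>\rho$ on $\mathfrak{a}_{++}$; for $\mathfrak{sl}_4(\R)$ one gets $2\rho-\Theta=\rho+\alpha_1+\alpha_2+\alpha_3$. Such bounds give a spectral gap but cannot yield temperedness. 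Your explanation of the excluded Lie algebras via $\Theta$ is also not where the exceptions come from.

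\textbf{The missing idea} is to use Theorem~A for all $\lambda$ simultaneously, together with convexity of the convergence domain, rather than to bound $\delta'_\rho$ directly. The paper argues as follows.
\begin{enumerate}
\item Using Theorem~A for every $\lambda\in\mathfrak{a}^\ast_{+,\iota}$ and the optimal element $\mu_\Gamma$ (with $\Re\wt\sigma\subseteq\operatorname{conv}(W\mu_\Gamma)$), it shows $R'\cap\mathfrak{a}^\ast_{+,\iota}=\mathfrak{a}^\ast_{+,\iota}\cap(\mu_\Gamma+{}_+\mathfrak{a}^\ast)$, where $R'=R-\rho$.
\item Since $R'$ is convex (H\"older), following the segment from an $\iota$-invariant $\lambda\in R'$ to the regular point $\mu_\Gamma+\varepsilon\mu_I\in R'\cap\mathfrak{a}^\ast_{+,\iota}$ gives
\[
R'\cap\mathfrak{a}^\ast_\iota\subseteq\mu_\Gamma+\sum_{\alpha\in I}\R_{\ge0}\alpha+\sum_{\alpha\notin I}\R\alpha,
\qquad I=\{\alpha\in\Pi\colon\langle\alpha,\mu_\Gamma\rangle>0\}.
\]
\item As $\mu_\Gamma+\mathcal{L}_\Gamma^\star\subseteq R'$, dualizing yields $v_\alpha+v_{\iota\alpha}\in\operatorname{conv}(\mathcal{L}_\Gamma)$ for $\alpha\in I$. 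Hence $\mathcal{L}_\Gamma$ meets $C_\alpha=\R_{\ge0}v_\alpha+\R_{\ge0}v_{\iota\alpha}$ nontrivially for every $\alpha\in I$.
\item Only now does $\mathcal{L}_\Gamma\subseteq\mathfrak{a}_{++}$ enter. Exclude rank one and the restricted root system $A_2$ with $\iota$ exchanging the two simple roots (exactly $\mathfrak{sl}_3(\mathbb{K})$ and $\mathfrak{e}_6^{-26}$, where $C_\alpha=\mathfrak{a}_+$). In every other case each $C_\alpha$ lies in a wall of $\mathfrak{a}_+$, so $I=\emptyset$. Therefore $\mu_\Gamma=0$ and $\wt\sigma\subseteq i\mathfrak{a}^\ast$.
\end{enumerate}
In this argument the exceptions are purely a matter of the face geometry of $\mathfrak{a}_+$. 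No a priori growth estimate at $\rho$ is ever proved; the information used is the shape of $\lambda\mapsto\delta'_\lambda$ combined with convexity.
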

}

We refer to the introduction and references in \cite{LWW} for an in-depth discussion of 
their results.

Let us shortly outline the proof of Theorem~\ref{thmA}
given in \cite{LWW}
which relies heavily on representation theoretic techniques for the unitary quasi-regular representation $R$ of $G$ on $L^2(\Gamma \backslash G)$.

Firstly, $\theta_\lambda$ is connected to the decay of matrix coefficients in $L^2(\Gamma \backslash G)$.
More precisely,
for $\lambda\in \mathfrak{a}^\ast_+$,
it is shown that
\[
	\Re \wt \sigma \subseteq  \operatorname{conv} (W\lambda)
\]
if and only if
for all $\varepsilon >0$ there is a constant $C$ such that,
for all $f_1,f_2\in L^2(\Gamma \backslash G/K)$,
\begin{equation}
	\label{eq:estL2}
	|\langle R(g) f_1, f_2\rangle_{L^2(\Gamma \backslash G)}| \leq 
	C e^{\varepsilon \|\mu_+(g)\|} e^{(\lambda - \rho)\mu_+(g)}
	\|f_1\| \| f_2\|
	\quad \forall g\in G.
\end{equation}
This is done by abstract direct integral decomposition of $L^2(\Gamma \backslash G)$
into irreducible unitary representations of $G$.
The irreducible representations needed to analyze the matrix coefficients for $K$-invariant functions $f_1,f_2$
are $K$-spherical
and
have precisely controlled decay
which leads to the above equivalence.

Secondly,
for compactly supported continuous functions $f_1,f_2\in C_c(\Gamma \backslash G)$
they obtain by elementary methods
the existence of $C=C(f_1,f_2,\varepsilon)$ such that
\begin{equation}
	\label{eq:estCcfunctions}
	|\langle R(g) f_1, f_2\rangle_{L^2(\Gamma \backslash G)}| \leq 
	C e^{\varepsilon \|\mu_+(g)\|} e^{(\delta'_\lambda \lambda - \rho)\mu_+(g)}
	\quad \forall g\in G.
\end{equation}
The constant $C$ in the estimate depends on the functions $f_1,f_2$
and it is not at all clear that the dependence is just by their $L^2$-norm
so that the first part of the proof is not yet ready to be applied.
However, \eqref{eq:estCcfunctions}
can be upgraded to \eqref{eq:estL2} with $\lambda$ replaced by%
\footnote{We note that there might be some slight ambiguity in the definition of
	$\delta'_\lambda$ if $\ker \lambda$ intersects $\mathcal{L}_\Gamma$.
	For $\lambda\in \mathfrak{a}_+^\ast$ this can only happen if $\mathfrak{g}$
is not simple.}
 $\delta'_\lambda \lambda$ using 
 the results of \cite{Cow23}
 that were used as a black box in \cite{LWW}
 that involve heavy arguments from the theory of $C^\ast$-algebras.
The main motivation of starting the project leading to the present article
was to find a self-contained proof relying on the analysis of the invariant differential operators $\mathbb{D}(G/K)$.

The previously known proof of Theorem~\ref{thmA} in the case
where $G$ has real rank one \cite{pattersonlimitset,MR360472,MR360473,MR360474,Cor90}
followed a completely different approach.
There, $\mathbb{D}(G/K)$ is generated by the Laplace-Beltrami operator $\Delta_{G/K}$
and
\[
	\wt \sigma = \{\lambda\in \C \colon -\lambda^2 + \rho^2 \in \sigma(\Delta_{\Gamma \backslash G/K})\}
	.
\]
In order to control $\wt \sigma$ one considers the resolvent kernel $k(z,gK,hK)$
of $(\Delta_{G/K} -z) ^{-1}$ for $\Re z<\rho^2$.
For every small $\varepsilon>0$ one has the estimate
\[
	|k(z,gK,hK)|\leq C_\varepsilon e^{\left(-\sqrt{\rho^2 - \Re z} -\rho +\varepsilon\right)
	d(gK,hK)}.
\]
The Schwartz kernel of resolvent of $\Delta_{\Gamma \backslash G/K}$ is given
by the $\Gamma$-average
\[
	\sum_{\gamma\in \Gamma} k(z,\gamma gK,hK).
\]
Thus, as soon as
\[
	\sqrt{\rho^2-z} +\rho>  \delta = 
	\inf\left\{s\in \R\colon
	\sum_{\gamma\in \Gamma} e^{-s d(\gamma g,h)}<\infty\right\}
\]
one infers $z\notin \sigma(\Delta_{\Gamma \backslash G/K})$
and $\max \Re \wt \sigma = \max(\delta-\rho,0)$ follows.
This is Theorem~\ref{thmA} for $\lambda=\rho$ in the rank one case.

The same arguments can be applied in higher rank 
to determine the spectrum of the Laplace-Beltrami operator \cite{AZ22,Web08,Leu04}.
Here one uses the celebrated heat kernel estimates by Anker and Ji \cite{AJ99}. 
However, the Laplace-Beltrami operator does not determine the joint spectrum $\wt \sigma$
and thus their results do not give Theorem~\ref{thmA}.

In this article we give a proof of the inequality $\theta_\lambda \leq \delta'_\lambda$
of Theorem~\ref{thmA},
i.e.~we prove
\begin{equation}
	\label{eq:resigmainconv}
	\Re \wt	\sigma \subseteq \delta'_\lambda \operatorname{conv} (W\lambda)
\end{equation}
for $\lambda\in \mathfrak{a}_+^\ast$ with $-\lambda \in W\lambda$,
using the same line of arguments as in the rank one case.
To deal with the whole algebra $\mathbb{D}(G/K)$ we first observe that
(Lemma~\ref{la:defjointspectrumelliptic})
\[
	\wt \sigma = \{\lambda\in \mathfrak{a}_\C^\ast 
		\colon \chi_\lambda(D) \in \sigma(_\Gamma D)
	\text{ for all elliptic and self-adjoint } D\in \mathbb{D}(G/K)\}
\]
where ${}_\Gamma D$ is the corresponding operator on $\Gamma \backslash G/K$
and $\chi_\lambda(D)$ is the symbol associated with $D$ (see Section~\ref{sub:Invariant differential operators and the Harish-Chandra isomorphism}).
Thus, the joint spectrum $\wt \sigma$ is determined by the individual spectra of elliptic
self-adjoint invariant differential operators on $\Gamma \backslash G/K$.
We then analyze these spectra independently as above:
We first determine the resolvent kernel (Proposition~\ref{la:kernelformulaGK})
and prove resolvent estimates (Proposition~\ref{prop:decaykernel}).
It follows that (Theorem~\ref{thm:spectrumsinglesa})
\begin{equation}
	\label{eq:thmspeccontained}
	\sigma(_\Gamma D) \subseteq \chi_{[0,\max(0,\delta'_\lambda)] \lambda +i \mathfrak{a}^\ast}(D)
\end{equation}
for every elliptic self-adjoint $D\in \mathbb{D}(G/K)$ with high enough order.
Thus,
\[
	\wt \sigma \subseteq\bigcap_{p\in \mathcal{P}_\R} \{ \lambda\in \mathfrak{a}_\C^\ast \colon
	p(i\lambda) \in p(\mathfrak{a}^\ast + i [0,\max(0,\delta'_\lambda)] \lambda)\}
\]
where the intersection is over the set $\mathcal{P}_\R$ of
elliptic $W$-invariant polynomials $p$ on $\mathfrak{a}_\C^\ast$
with real coefficients (Proposition~\ref{prop:jointspecinintersectionofpolys}).
It remains to determine the $\mathcal{P}_\R$-hull
of $\mathfrak{a}^\ast + i [0,1]\lambda$ for $\lambda\in \mathfrak{a}^\ast$.
In Section~\ref{sec:Separating tubes by invariant polynomials}
we prove that (Theorem~\ref{thm:realpolynomialhull})
\[
	\bigcap_{p\in \mathcal P_\R} \{ \lambda\in \mathfrak{a}_\C^\ast \colon
	p(i\lambda) \in p(\mathfrak{a}^\ast + i [0,1] \lambda)\}
	\subseteq  \operatorname{conv}(W\lambda \cup W(-\lambda)) + i\mathfrak{a}^\ast
\]
which yields by Lemma~\ref{la:nonhermitianconvexhull}
\begin{equation}
	\label{eq:bdspecnonHerm}
	\Re \wt \sigma \subseteq \max(0,\delta'_\lambda)\operatorname{conv}\left(W
	\frac{\lambda-w_0 \lambda}{2}\right),
\end{equation}
where $w_0$ is the unique element in $W$
mapping $\mathfrak{a}_+$ to $-\mathfrak{a}_+$.
In particular, we get \eqref{eq:resigmainconv}
if $-\lambda\in W\lambda$.
\begin{remark}
	\begin{enumerate}
		\item In our proof we have to assume that $\mathcal{L}_\Gamma$ is contained in $\mathfrak{a}_{++}$.
			This is due to the reason that we prove the resolvent estimates only in $\mathfrak{a}_{++}$.
			However,
			it is likely that they hold in a similar fashion near the boundary.
			The main problem is that the function $\Phi_\lambda$ determining the resolvent kernel
			is only defined in $\mathfrak{a}_{++}$.
			Grouping different $\Phi_{w\lambda}$, $w\in W$,
			corresponding to different walls of the Weyl chamber
			should resolve this problem as in \cite{CM82}.
			As one assumes $\mathcal{L}_\Gamma \subseteq \mathfrak{a}_{++}$ in
			Theorem~\ref{thmB},
			we get a new proof of it and
			we decided not to push the method of grouping to the limit.
		\item For non-Hermitian $\lambda$ (i.e.~$-\lambda\notin W\lambda$)
			there is no upper bound on $\theta_\lambda$
			written in \cite{LWW}.
			However, analyzing carefully the methods of \cite{Cow23}
			(which need the assumption $-\lambda \in W\lambda$)
			one also finds \eqref{eq:bdspecnonHerm}.
		\item Since \eqref{eq:thmspeccontained}
			is not sharp as remarked in Section~\ref{sub:specindividual}
			we do not get directly that $\max(0,\delta'_\lambda)$
			is the optimal constant
			bounding $\Re \wt \sigma$.
			The optimality (i.e.~$\theta_\lambda=\max(0,\delta_\lambda')$)
			will be proved in Section~\ref{sub:Completion of the proof for thmA}
			using the exact value of the bottom of the Laplace spectrum obtained in \cite{AZ22}
			under the additional assumption that $\Gamma$
			is Zariski dense.
			To summarize, this paper gives a new proof
			of Theorem~\ref{thmA}
			for Zariski dense $\Gamma$ with $\mathcal{L}_\Gamma\subseteq \mathfrak{a}_{++}$.
	\end{enumerate}
\end{remark}
In the last section (Section~\ref{sec:prooftempered})
we consider the domain of convergence
\[
	R=\operatorname{closure} (\{\lambda \in\mathfrak{a}^\ast \colon P_\lambda <\infty\})
\]
for the Poincar\'e series
\[
	P_\lambda = \sum_{\gamma\in \Gamma} e^{-\lambda\mu_+(\gamma)}
	\qquad \text{for }\lambda\in \mathfrak{a}^\ast
	.
\]
Recall that there is an
optimal element $\mu_\Gamma$ in $\mathfrak{a}^\ast$
bounding $\Re \wt \sigma$ (see \cite[Prop.~5.1]{LWW}).
This element is used to get a precise description of $R$.
By definition, $\mu_\Gamma + \rho \in R$
and thus $\mu_\Gamma + \rho + \mathcal{L}_\Gamma^\star \subseteq R$
where $\mathcal{L}_\Gamma^\star$ is the dual cone of $\mathcal{L}_\Gamma$.
Using Theorem~\ref{thmA} we also get an upper bound on $R$, see Proposition~\ref{prop:upperboundR'}.
This leads to a new proof of Theorem~\ref{thmB}, see \eqref{eq:onewallavoided}.
Let us here give the description for $R$ in the case where $\mu_\Gamma$ is regular
and refer to Proposition~\ref{prop:upperboundR'} for the general statement.
\begin{theorem}
	Assume that $\mu_\Gamma \in \mathfrak{a}_{++}^\ast$ (the set corresponding to $\mathfrak{a}_{++}$ under the identification of $\mathfrak{a}$ and $\mathfrak{a}^\ast$ by the Killing form),
	that $w_0 =-1$,
	and that $\Gamma$ is Zariski dense so that $\mathcal{L}_\Gamma$ is convex.
	Then,
	\[
		\mathcal{L}_\Gamma = \mathfrak{a}_+
		\quad
		\text{and}
		\quad
		R=\mu_\Gamma +\rho + \mathcal{L}_\Gamma^\star.
	\]

\end{theorem}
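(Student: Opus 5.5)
We argue by squeezing $R$ between a lower bound coming from the definition of $\mu_\Gamma$ and an upper bound coming from Theorem~\ref{thmA}, applied for every $\lambda\in\mathfrak{a}_+^\ast$. Since $w_0=-1$, every $\lambda\in\mathfrak{a}^\ast_+$ satisfies $-\lambda\in W\lambda$, so Theorem~\ref{thmA} is available for all such $\lambda$.

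\textbf{Lower bound.} By definition of $\mu_\Gamma$, the point $\mu_\Gamma+\rho$ lies in $R$. If $\nu\in\mathcal{L}_\Gamma^\star$, then $\nu$ is nonnegative on $\mathcal{L}_\Gamma$, hence nonnegative up to $\varepsilon\|\mu_+(\gamma)\|$ on $\mu_+(\gamma)$ for all but finitely many $\gamma$. Consequently $P_{\mu_\Gamma+\rho+\nu+\varepsilon'}$ is dominated by a convergent series, and closing up gives $\mu_\Gamma+\rho+\mathcal{L}_\Gamma^\star\subseteq R$.

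\textbf{Upper bound.} Fix $\lambda\in\mathfrak{a}_+^\ast$ with $\lambda\neq0$. By the optimality of $\mu_\Gamma$ (\cite[Prop.~5.1]{LWW}), $\mu_\Gamma\in\Re\wt\sigma$ up to closure, and $\Re\wt\sigma\subseteq\theta\operatorname{conv}(W\lambda)$ holds exactly when $\mu_\Gamma\in\theta\operatorname{conv}(W\lambda)$. For $\mu_\Gamma$ dominant this means $\theta\lambda-\mu_\Gamma$ lies in the cone spanned by the positive roots. Hence
\[
\theta_\lambda=\min\{\theta\ge0:\theta\lambda-\mu_\Gamma\in\R_{\ge0}\text{-span of }\Sigma^+\}.
\]
Now take $\nu\in R$ and write $\nu=\rho+\eta$. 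Convergence of $P_{\nu}$ forces $P_{s\lambda+\rho}<\infty$ whenever $s\lambda\ge\eta$ on $\mathcal{L}_\Gamma$ (with some slack). Thus $\delta'_\lambda\le\inf\{s: s\lambda-\eta\in\mathcal{L}_\Gamma^\star\}$, and $\delta'_\lambda=\theta_\lambda$ by Theorem~\ref{thmA}. We want to convert this into a statement about $\eta$: each point of $R$ constrains $\delta'_\lambda$ from above, so together with Theorem~\ref{thmA} we obtain the inequality $\sup_{v\in\mathcal{L}_\Gamma,\ \lambda(v)=1}(\eta-\mu_\Gamma)(v)\ge\ldots$. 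This is where I would instead invoke Proposition~\ref{prop:upperboundR'}, which states exactly the upper bound on $R$ derived from Theorem~\ref{thmA}, and then specialize it.

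\textbf{First claim, $\mathcal{L}_\Gamma=\mathfrak{a}_+$.} Suppose some boundary direction of $\mathfrak{a}_+$ were missing from the convex cone $\mathcal{L}_\Gamma$. Then there would be a $\lambda$ on a wall of $\mathfrak{a}_+^\ast$ (or near one) that is strictly positive on $\mathcal{L}_\Gamma$. For such $\lambda$ one has $\delta'_\lambda<\infty$, which gives the bound $\Re\wt\sigma\subseteq\delta'_\lambda\operatorname{conv}(W\lambda)$. But $\operatorname{conv}(W\lambda)$ for singular $\lambda$ contains no regular point in its relative position compatible with $\mu_\Gamma\in\mathfrak{a}^\ast_{++}$ once one takes $\lambda$ degenerate enough; more precisely, the condition $\theta\lambda-\mu_\Gamma\in\operatorname{cone}(\Sigma^+)$ combined with the critical exponent bound $\delta'_\lambda\lambda\le\mu_\Gamma+\rho$-type estimates yields a contradiction as $\lambda$ approaches the wall. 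I expect this step to be the \emph{main obstacle}. It requires carefully using regularity of $\mu_\Gamma$ to show that $\theta_\lambda$ must blow up like the inverse of the distance of $\lambda$ to the wall, whereas $\delta'_\lambda$ stays bounded whenever $\lambda$ is positive on $\mathcal{L}_\Gamma$.

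\textbf{Second claim, the formula for $R$.} Once $\mathcal{L}_\Gamma=\mathfrak{a}_+$ is known, $\mathcal{L}_\Gamma^\star=\operatorname{cone}(\Sigma^+)$. The upper bound from Proposition~\ref{prop:upperboundR'} reads $\eta-\mu_\Gamma\in\mathcal{L}_\Gamma^\star$, tested against all $\lambda\in\mathfrak{a}_+^\ast$ by duality. Combined with the lower bound, this gives $R=\mu_\Gamma+\rho+\mathcal{L}_\Gamma^\star$.
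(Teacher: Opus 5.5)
Your lower bound (this is \eqref{eq:addlimitconeinR}; note that $\mu_\Gamma\in R'$ is not literally ``by definition'' but follows from Theorem~\ref{thmA} at $\lambda=\mu_\Gamma$) and your appeal to Proposition~\ref{prop:upperboundR'} are the right ingredients. Since $w_0=-1$ one has $\mathfrak{a}^\ast_\iota=\mathfrak{a}^\ast$, and since $\mu_\Gamma$ is regular, $I=\Pi$, so $R'\subseteq\mu_\Gamma+{}_+\mathfrak{a}^\ast$.

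The gap is the first claim, $\mathcal{L}_\Gamma=\mathfrak{a}_+$. Your second claim rests on it, and the mechanism you propose for it fails. Write $v_\alpha$ for the extremal rays of $\mathfrak{a}_+$. Your own description of $\theta_\lambda$ then gives $\theta_\lambda=\max_{\alpha\in\Pi}\mu_\Gamma(v_\alpha)/\lambda(v_\alpha)$. This blows up only as $\lambda$ approaches the boundary of ${}_+\mathfrak{a}^\ast$, not the walls of $\mathfrak{a}^\ast_+$; the two meet only when $\Sigma$ is reducible. For an irreducible root system, every nonzero $\lambda\in\mathfrak{a}^\ast_+$ is a strictly positive combination of all simple roots. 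Hence $\theta_\lambda$ stays bounded on the unit sphere of $\mathfrak{a}^\ast_+$. Moreover, every such $\lambda$ is already positive on $\mathfrak{a}_+\setminus\{0\}$, so ``$\lambda$ positive on $\mathcal{L}_\Gamma$'' carries no information about $\mathcal{L}_\Gamma$. More fundamentally, Theorem~\ref{thmA} gives $\theta_\lambda=\delta'_\lambda$ for \emph{all} $\lambda\in\mathfrak{a}^\ast_+$. So playing these two quantities against each other can never produce a contradiction unless some independent input that sees $\mathcal{L}_\Gamma$ enters.

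That input is your own lower bound, and the missing step is a one-line duality. Your two bounds give $\mu_\Gamma+\mathcal{L}_\Gamma^\star\subseteq R'\subseteq\mu_\Gamma+{}_+\mathfrak{a}^\ast$, hence $\mathcal{L}_\Gamma^\star\subseteq{}_+\mathfrak{a}^\ast$, and ${}_+\mathfrak{a}^\ast$ is exactly the dual cone of $\mathfrak{a}_+$. Dualizing once more (bipolar theorem; $\mathcal{L}_\Gamma$ is closed, and convex by Zariski density) yields $\mathfrak{a}_+\subseteq\mathcal{L}_\Gamma\subseteq\mathfrak{a}_+$. This is Corollary~\ref{cor:inclusion} with $I=\Pi$ and $\iota=\mathrm{id}$, which is how the paper argues. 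Then $\mathcal{L}_\Gamma^\star={}_+\mathfrak{a}^\ast$, the two bounds coincide, and $R=\mu_\Gamma+\rho+\mathcal{L}_\Gamma^\star$ follows.

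If you prefer to work with Theorem~\ref{thmA} directly, take your inequality $\delta'_\lambda\le\inf\{s\colon s\lambda-\mu_\Gamma\in\mathcal{L}_\Gamma^\star\}$ and test it at $\lambda=\mu_\Gamma-t\alpha$. This $\lambda$ is dominant for small $t>0$ precisely because $\mu_\Gamma$ is regular. The maximum of $\mu_\Gamma(v)/\lambda(v)$ over $\mathfrak{a}_+$ is attained only on $\mathbb{R}_{>0}v_\alpha$, so $\theta_\lambda=\delta'_\lambda$ forces $v_\alpha\in\mathcal{L}_\Gamma$. The useful test functionals sit next to $\mu_\Gamma$, not next to a wall.
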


\subsection*{Acknowledgements}
I thank Benjamin Delarue and Tobias Weich for their help and advice in many aspects of this work.
I also want to thank Peter Smillie for discussions leading to Section~\ref{sec:prooftempered}.

\section{Fourier-Helgason transform and resolvent estimates on \texorpdfstring{$G/K$}{G/K}}
\label{sec:analysisonGK}
\subsection{Notation for semisimple Lie groups}%
\label{sub:Notation for semisimple Lie groups}
In this article,
$G$ is a real semisimple connected non-compact Lie group with finite center
and $K$ is a maximal compact subgroup of $G$.
The corresponding Riemannian symmetric space of noncompact type is then $G/K$.
We fix an Iwasawa decomposition $G=KAN$, and have $A\cong \R^r$ where $r$ is the real rank of $G$ or the rank of the symmetric space $G/K$.
Furthermore,  we define $M$ as the centralizer of $A$ in $K$ and $\overline{N}$ to be the nilpotent subgroup such that $KA\overline{N}$ is the opposite Iwasawa decomposition.
We denote by fractal letters
the corresponding Lie algebras.
For $g\in G$ let $H(g)\in \mathfrak{a}$ be the logarithm of the $A$-component in the Iwasawa decomposition.
Let $\Sigma \subseteq \mathfrak{a}^\ast$ be the root system of restricted roots,
$\Sigma^+$ the positive system corresponding to the Iwasawa decomposition,
$\Pi$ the subset of simple roots,
and $W$ the corresponding Weyl group acting on $\mathfrak{a}^\ast$.
As usual, for $\alpha\in \Sigma$, we denote
by $m_\alpha$ the dimension of the root space,
and by $\rho$ the half sum of positive restricted roots counted with multiplicity.
Let $\mathfrak{a}_+ =\{H\in \mathfrak{a}\mid \alpha(H)\geq 0 \:\forall \alpha\in \Sigma\}$ the (closed) positive Weyl chamber,
and $\mathfrak{a}_+^\ast$ the corresponding cone in $\mathfrak{a}^\ast$
via the identification $\mathfrak{a} \leftrightarrow \mathfrak{a}^\ast$ through the Killing form $\langle\cdot,\cdot\rangle$.
We have the Cartan decomposition $G=K\exp(\mathfrak a_+)K$
and for $g\in G$ there is a unique $\mu_+(g)\in \mathfrak a_+$ such that
$g\in K \exp(\mu_+(g))K$.

\subsection{Invariant differential operators and the Harish-Chandra isomorphism}%
\label{sub:Invariant differential operators and the Harish-Chandra isomorphism}
In this section we introduce one of the main objects of this article,
namely the algebra $\mathbb D(G/K)$ of \emph{$G$-invariant differential
operators} on $G/K$, i.e. differential operators commuting with the left
regular representation $L_g$ for elements  $g\in G$ where $L_g f(x)\coloneqq f(g ^{-1} x)$.
This algebra always contains the Laplace-Beltrami operator $\Delta$.
In general it is a commutative algebra that can be identified with a set of polynomials by the so-called
Harish-Chandra isomorphism.

\begin{theorem}[see {\cite[Thm.~II.5.17]{gaga}}]\label{thm:HC}
	There is an algebra isomorphism
	\begin{align*}
		\operatorname{HC} \colon \mathbb D(G/K)\to \mathrm{Poly}(\mathfrak a^\ast_\C)^W
	\end{align*}
	from the $G$-invariant differential operators $\mathbb D(G/K)$ to the Weyl group invariant polynomials  on $\mathfrak a_\C^\ast$. We write $\chi_\lambda(D)$ instead of $\operatorname{HC}(D)(\lambda)$
	and $D_p$ for the operator in $\mathbb{D}(G/K)$ 
	with $\chi_\lambda(D_p) = p(i\lambda)$ for
	$p \in \operatorname{Poly} (\mathfrak{a}_\C^\ast)^W$.
\end{theorem}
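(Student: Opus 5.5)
This is the Harish-Chandra isomorphism, which is classical. I would follow the standard route of Helgason, in four steps.

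\emph{Step 1: the radial component map.} For $D\in\mathbb{D}(G/K)$, choose $u\in U(\mathfrak g)^K$ representing $D$. The identification is $\mathbb D(G/K)\cong U(\mathfrak g)^K/(U(\mathfrak g)\mathfrak k)^K$. Using $\mathfrak g=\mathfrak n\oplus\mathfrak a\oplus\mathfrak k$ and the Poincaré--Birkhoff--Witt theorem, write
\[
u\in U(\mathfrak a)\oplus(\mathfrak n U(\mathfrak g)+U(\mathfrak g)\mathfrak k).
\]
Let $\pi(u)\in U(\mathfrak a)=S(\mathfrak a)=\mathrm{Poly}(\mathfrak a^\ast_\C)$ be the component in $U(\mathfrak a)$. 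Now define $\operatorname{HC}(D)(\lambda)=\pi(u)(\lambda-\rho)$, shifted by $\rho$ with the sign matching the paper's normalization. Equivalently, $\chi_\lambda(D)$ is the eigenvalue of $D$ on $x\mapsto e^{(i\lambda+\rho)H(x)}$ or on the spherical function $\varphi_\lambda$.

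\emph{Step 2: homomorphism.} Take $u,v\in U(\mathfrak g)^K$. Then $uv-\pi(u)v\in\mathfrak nU(\mathfrak g)+U(\mathfrak g)\mathfrak k$, using that $v$ is $K$-invariant so $U(\mathfrak g)\mathfrak k\, v\subseteq U(\mathfrak g)\mathfrak k$. It follows that $\pi$ is multiplicative on $U(\mathfrak g)^K$, and it vanishes on $(U(\mathfrak g)\mathfrak k)^K$. So $\operatorname{HC}$ is a well-defined algebra homomorphism. Injectivity follows from the highest-term analysis: the top-degree symbol of $\pi(u)$ is the restriction to $\mathfrak a^\ast$ of the $K$-invariant symbol of $D$ in $S(\mathfrak p)^K$. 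Then Chevalley's restriction theorem $S(\mathfrak p)^K\cong S(\mathfrak a)^W$ gives injectivity by induction on degree.

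\emph{Step 3: $W$-invariance.} This is the main obstacle. I would use that $D\varphi_\lambda=\chi_\lambda(D)\varphi_\lambda$ together with Harish-Chandra's formula $\varphi_\lambda=\int_K e^{(i\lambda-\rho)H(gk)}dk$, and the symmetry $\varphi_{w\lambda}=\varphi_\lambda$. Then $\chi_{w\lambda}=\chi_\lambda$ on the Zariski-dense set of $\lambda$, hence everywhere. Alternatively, one can reduce to rank-one Levi subgroups for simple reflections, where invariance is a direct computation.

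\emph{Step 4: surjectivity.} Filter by degree. Given $p\in S(\mathfrak a)^W$, Chevalley lifts its top term to $S(\mathfrak p)^K$. Symmetrization then gives some $D$ with $\operatorname{HC}(D)-p$ of lower degree, and we induct.
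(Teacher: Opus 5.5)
The paper gives no proof of this statement; it quotes it from Helgason. Your Steps 1, 2 and 4 are the standard argument: projection along $\mathfrak nU(\mathfrak g)+U(\mathfrak g)\mathfrak k$, multiplicativity via $\mathfrak k v=v\mathfrak k$ for $v\in U(\mathfrak g)^K$, and leading symbols plus Chevalley's restriction theorem. These steps are fine apart from the sign issue below.

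\textbf{Step 3 is a genuine gap.} You rightly flag it as the main obstacle, but you do not resolve it. The function $\phi_\lambda$ is the \emph{unique} bi-$K$-invariant function with $\phi_\lambda(e)=1$ and $D\phi_\lambda=\chi_\lambda(D)\phi_\lambda$, and this uniqueness is proved without the Harish-Chandra isomorphism. Hence $\phi_{w\lambda}=\phi_\lambda$ for all $\lambda$ is \emph{equivalent} to $\chi_{w\lambda}=\chi_\lambda$ for all $\lambda$. In the source the paper relies on (\cite[Thm.~IV.4.3]{gaga}, quoted in Section~\ref{sub:Spherical functions}), this symmetry is in fact deduced from the $W$-invariance of $\operatorname{HC}$. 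So your first argument assumes what it is supposed to prove.

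You need an independent input. One option is a proof of $\phi_{w\lambda}=\phi_\lambda$ that does not pass through $\operatorname{HC}$, for example via intertwining operators between the spherical principal series with parameters $\lambda$ and $w\lambda$, together with multiplicity one of the trivial $K$-type. The other option is to actually carry out the rank-one reduction you mention in one line, which is where the real content lies. For a simple root $\alpha$, set $L_\alpha=Z_G(\ker\alpha)$ and $\mathfrak n^\alpha=\bigoplus\{\mathfrak g_\beta\colon \beta\in\Sigma^+,\ \beta\notin\mathbb R\alpha\}$. You must show two things:
\begin{enumerate}
\item Projecting $u\in U(\mathfrak g)^K$ along $\mathfrak n^\alpha U(\mathfrak g)+U(\mathfrak g)\mathfrak k$ gives an operator $D^\alpha\in\mathbb D(L_\alpha/K\cap L_\alpha)$ with $\operatorname{HC}(D)(\lambda)=\operatorname{HC}_{L_\alpha}(D^\alpha)(\lambda+\rho^\alpha)$. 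Here $\rho^\alpha$ is the half sum of the roots occurring in $\mathfrak n^\alpha$, and it is fixed by $s_\alpha$.
\item $\operatorname{HC}_{L_\alpha}$ is $s_\alpha$-invariant. This holds because $L_\alpha/(K\cap L_\alpha)$ is a rank-one symmetric space times a flat factor. By induction on the order, its invariant operators are polynomials in constant-coefficient operators along $\ker\alpha$ and the rank-one Laplacian, and their images are visibly $s_\alpha$-invariant.
\end{enumerate}

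\textbf{A smaller point in Step 1: the sign of the $\rho$-shift.} For your decomposition along $\mathfrak nU(\mathfrak g)+U(\mathfrak g)\mathfrak k$ and the paper's normalization, one has $e_{\lambda,eM}(nak)=e^{(\lambda+\rho)(\log a)}$. Hence $D_u e_{\lambda,eM}=\pi(u)(\lambda+\rho)\,e_{\lambda,eM}$, so $\chi_\lambda(D_u)=\pi(u)(\lambda+\rho)$, not $\pi(u)(\lambda-\rho)$. As a check, take $u=\sum_i Z_i^2$ with $Z_i$ an orthonormal basis of $\mathfrak p$. Then $\pi(u)(\mu)=\langle\mu,\mu\rangle-2\langle\rho,\mu\rangle$. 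The $+\rho$ shift gives the $W$-invariant polynomial $\langle\lambda,\lambda\rangle-\langle\rho,\rho\rangle$, whereas the $-\rho$ shift does not give a $W$-invariant polynomial.

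Your plane waves and your Harish-Chandra formula are also written in Helgason's $i\lambda$-convention. The paper's $\phi_\lambda$ is Helgason's $\phi_{-i\lambda}$, so you should work in one convention throughout to match the statement's normalization $\chi_\lambda(D_p)=p(i\lambda)$.
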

The convention for $D_p$ is chosen with the multiplication by $i$
in order to have real polynomials correspond to symmetric operators (see \eqref{eq:symmetricDO}).

By Chevalley's Theorem \cite[Thm.~3.5]{Hum90}
$\operatorname{Poly} (\mathfrak{a}_\C^\ast)^W$ and consequently also $\mathbb{D}(G/K)$ 
is a polynomial algebra in $r=\dim \mathfrak{a}$ generators.
The generators of $\operatorname{Poly} (\mathfrak{a}_\C^\ast)^W$
can be written down in terms of the root system.
For example in the case of $\mathfrak{g}=\mathfrak{sl}_n(\R)$
the abelian subalgebra $\mathfrak{a}$ can be chosen as the set of traceless
diagonal matrices.
Then homogeneous generators of $\operatorname{Poly} (\mathfrak{a}_\C^\ast)^W$
are given by the polynomials
\[
	p_i(\operatorname{diag} (\lambda_1,\ldots,\lambda_n))=
	\lambda_1^i + \ldots +\lambda_n^i,
	\quad i=2,\ldots,n.
\]
For the corresponding differential operators $D _{p_i}$ see \cite{Brennecken, phd}.

We close this section with the following lemma.
\begin{lemma}[{\cite[Lemma~5.21, Cor.~5.3]{gaga}}]
	\label{la:HCadjoint}
	Let $D \in \mathbb{D}(G/K)$ and denote by $D^\ast$ its adjoint.
	Then
	\[
		\chi_\lambda(D^\ast)= \overline{\chi _{-\overline{\lambda}}(D)}
		.
	\]
\end{lemma}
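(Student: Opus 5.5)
The plan is to compute the adjoint of $D$ by testing it against the spherical functions, which are the joint eigenfunctions that determine the Harish-Chandra character. Recall that $\chi_\lambda(D)$ is characterized by $D\varphi_\lambda = \chi_\lambda(D)\varphi_\lambda$. Here
\[
	\varphi_\lambda(gK)=\int_K e^{(i\lambda-\rho)H(gk)}\,dk
\]
is the elementary spherical function, or alternatively $D e_{\lambda,b}=\chi_\lambda(D)e_{\lambda,b}$ for the plane waves $e_{\lambda,b}(x)=e^{(i\lambda+\rho)A(x,b)}$. (The normalization must be matched with the convention of Theorem~\ref{thm:HC}; I will write the argument with the $i\lambda$ convention, so that $\chi_\lambda(D_p)=p(i\lambda)$.)

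The main step is to pair $D$ against a radial test function using the spherical transform
\[
	\mathcal{H}f(\lambda)=\int_{G/K} f(x)\varphi_{-\lambda}(x)\,dx .
\]
For $f\in C_c^\infty(K\bk G/K)$, the operator $D$ preserves $K$-invariance. I will first show $\mathcal{H}(Df)(\lambda)=\chi_\lambda(D)\mathcal{H}f(\lambda)$. This holds because $\mathcal{H}(Df)(\lambda)=\int f\, D^t\varphi_{-\lambda}$, where $D^t$ is the formal transpose. One then uses $\chi_\lambda(D^t)=\chi_{-\lambda}(D)$, which is the standard fact $D^t$ corresponds to $\lambda\mapsto -\lambda$ (via $x\mapsto x^{-1}$ and $\varphi_\lambda(x^{-1})=\varphi_{-\lambda}(x)$). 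Alternatively, one can bypass $D^t$ by viewing $D f=f\ast T$ as convolution with a $K$-biinvariant distribution $T$ supported at the origin. In that picture $\mathcal{H}(f\ast T)=\mathcal{H}f\cdot\mathcal{H}T$ and $\mathcal{H}T(\lambda)=\chi_\lambda(D)$.

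Next compute $\langle Df_1,f_2\rangle=\langle f_1,D^\ast f_2\rangle$ for radial $f_1,f_2$ via the Plancherel formula for the spherical transform:
\[
	\langle f_1,f_2\rangle=c\int_{\mathfrak a^\ast}\mathcal{H}f_1(\lambda)\overline{\mathcal{H}f_2(\lambda)}\,|c(\lambda)|^{-2}d\lambda .
\]
The left side becomes $\int \chi_\lambda(D)\mathcal{H}f_1\overline{\mathcal{H}f_2}$, and the right side becomes $\int \mathcal{H}f_1\overline{\chi_\lambda(D^\ast)\mathcal{H}f_2}$. Since $\mathcal{H}(C_c^\infty(K\bk G/K))$ is dense by Paley–Wiener, this yields $\chi_\lambda(D^\ast)=\overline{\chi_\lambda(D)}$ for all real $\lambda$.

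To finish, note that the two sides of the claimed identity agree on $\mathfrak a^\ast$. Both $\lambda\mapsto\chi_\lambda(D^\ast)$ and $\lambda\mapsto\overline{\chi_{-\bar\lambda}(D)}$ are holomorphic polynomials on $\mathfrak a_\C^\ast$. To compare them on $\mathfrak a^\ast$, use the $i\lambda$ convention: for real $\lambda$ one has $\overline{\chi_{-\bar\lambda}(D)}=\overline{\chi_{-\lambda}(D)}$, which by $W$-invariance and the symmetry of $\varphi_\lambda$ under $\lambda\mapsto-\lambda$ up to $W$ (Harish-Chandra's $\varphi_\lambda=\varphi_{w\lambda}$, with conjugation $\overline{\varphi_\lambda}=\varphi_{-\bar\lambda}$) matches $\overline{\chi_\lambda(D)}$. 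Two polynomials agreeing on the totally real subspace $\mathfrak a^\ast$ agree everywhere, which gives the identity.

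The main obstacle is keeping the sign and conjugation conventions straight, in particular showing $\overline{\varphi_\lambda}=\varphi_{-\bar\lambda}$ and $\chi_{-\lambda}=\chi_{w_0'\lambda}$. The latter can fail: $-1\notin W$ in general, so $\chi_{-\lambda}\neq\chi_\lambda$. This is exactly why the formula carries $-\bar\lambda$ rather than $\bar\lambda$. I would therefore avoid using $-\lambda\in W\lambda$ and instead derive the formula directly. Conjugating the eigen-equation gives $\overline{D}\,\overline{\varphi_\lambda}=\overline{\chi_\lambda(D)}\,\overline{\varphi_\lambda}$, with $\overline{\varphi_\lambda}=\varphi_{-\bar\lambda}$. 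Combined with $D^\ast=\overline{D}^t$ and $\chi_\lambda(D^t)=\chi_{-\lambda}(D)$, this yields the formula for all complex $\lambda$ at once.
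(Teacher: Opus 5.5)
The ingredients in your last paragraph are the right ones, and they are the standard argument behind the Helgason reference (the paper gives no proof and only cites Helgason). However, the sign bookkeeping is wrong, and this is also exactly where your Plancherel route breaks.

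Your $\varphi_\lambda=\int_K e^{(i\lambda-\rho)H(gk)}\,dk$ is the paper's $\phi_{i\lambda}$, because $\phi_\mu(g)=\phi_{-\mu}(g^{-1})=\int_K e^{(\mu-\rho)H(gk)}\,dk$. So your $\chi_\lambda$ is the paper's $\chi_{i\lambda}$; in particular your $\chi_\lambda(D_p)$ equals $p(-\lambda)$, not $p(i\lambda)$. Substituting $\mu=i\lambda$, so that $-\bar\mu=i\bar\lambda$, the lemma in your normalization reads $\chi_\lambda(D^\ast)=\overline{\chi_{\bar\lambda}(D)}$. The literal formula with $-\bar\lambda$ is false there. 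Your Plancherel step correctly yields $\chi_\lambda(D^\ast)=\overline{\chi_\lambda(D)}$ on $\mathfrak a^\ast$, and matching this with $\overline{\chi_{-\lambda}(D)}$ would force $\chi_{-\lambda}(D)=\chi_\lambda(D)$, which fails when $-1\notin W$, as you note yourself.

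Your direct argument has the same slip. With $\overline{\varphi_\lambda}=\varphi_{-\bar\lambda}$ (correct for your $\varphi_\lambda$), conjugating the eigen-equation gives $\chi_\mu(\overline D)=\overline{\chi_{-\bar\mu}(D)}$. Then $\chi_\mu(D^\ast)=\chi_{-\mu}(\overline D)=\overline{\chi_{\bar\mu}(D)}$, which is not the displayed formula. The $-\bar\lambda$ in the statement is not caused by $-1\notin W$. It is there because in the paper's normalization the unitary axis is $i\mathfrak a^\ast$, where $-\bar\lambda=\lambda$.

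To repair this, run the same argument with the paper's $\phi_\lambda(g)=\int_K e^{-(\lambda+\rho)H(g^{-1}k)}\,dk$, for which $\overline{\phi_\lambda}=\phi_{\bar\lambda}$. Then $\chi_\mu(\overline D)=\overline{\chi_{\bar\mu}(D)}$, and $\chi_\mu(D^\ast)=\chi_\mu(\overline{D}^{\,t})=\chi_{-\mu}(\overline D)=\overline{\chi_{-\bar\mu}(D)}$ for all $\mu\in\mathfrak a_\C^\ast$.

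The Plancherel route also closes in this normalization: on $i\mathfrak a^\ast$ the target is just $\overline{\chi_\lambda(D)}$, and $\lambda\mapsto\overline{\chi_{-\bar\lambda}(D)}$ is a holomorphic polynomial. But the route is redundant. It needs $\mathcal F(Df)=\chi_\lambda(D)\mathcal Ff$, which must not be taken from Lemma~\ref{la:FourierIntertwiner}, since that lemma's proof uses the present one. The transpose rule you use to get it instead already gives the direct proof.

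That rule, $\chi_\lambda(D^t)=\chi_{-\lambda}(D)$, is the real content. Since inversion does not act on $G/K$, it should be proved on $G$. Write $Df=f\ast T$ with $T$ a bi-$K$-invariant distribution supported in $K$, so that $D^tf=f\ast\check T$. Evaluating $D\phi_\lambda=\phi_\lambda\ast T$ at $e$ gives $\chi_\lambda(D)=\langle T,\phi_{-\lambda}\rangle$. Hence $\chi_\lambda(D^t)=\langle \check T,\phi_{-\lambda}\rangle=\langle T,\phi_\lambda\rangle=\chi_{-\lambda}(D)$.
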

\subsection{Spherical functions}%
\label{sub:Spherical functions}
In this section we introduce the spherical functions $\phi_\lambda$
and discuss their Harish-Chandra expansion.
\begin{definition}[]
	The spherical function $\phi_\lambda$ for $\lambda\in \mathfrak{a}_\C^\ast$
	is defined by
	\[
		\phi_\lambda(g)= \int_{K}^{} e ^{-(\lambda+\rho)H(g ^{-1}k)} \: d{k},
		\quad g\in G.
	\]
\end{definition}
We collect some facts about $\phi_\lambda$ in the following proposition.
\begin{proposition}
	[{\cite[Thm.~IV.4.3]{gaga}}]
	\begin{enumerate}
		\item $\phi_\lambda$ is the unique smooth bi-$K$-invariant function $f$
			on $G$
			such that $f(e)=1$ 
			and $D f = \chi_\lambda(D) f$.
		\item $\phi_\lambda=\phi_\mu$ if and only if $\lambda \in W\mu$.
		\item $\phi_\lambda(g ^{-1})= \phi_{-\lambda}(g)$
			and $\overline{\phi_\lambda} = \phi_{\overline{\lambda}}$.
	\end{enumerate}
\end{proposition}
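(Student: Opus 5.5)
Since the statement is cited from \cite[Thm.~IV.4.3]{gaga}, I will only sketch the standard argument and take the basic structure of $\mathbb{D}(G/K)$ from Theorem~\ref{thm:HC}.

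\emph{Eigenfunction property.} Set $e_{\lambda,k}(gK)=e^{-(\lambda+\rho)H(g^{-1}k)}$. For fixed $k$ this is a plane wave on $G/K$, and it is a joint eigenfunction of every $D\in\mathbb{D}(G/K)$ with eigenvalue $\chi_\lambda(D)$. To see this, first conjugate by $k$ to reduce to $k=e$. Then $g\mapsto e^{-(\lambda+\rho)H(g^{-1})}$ is left $N$-invariant after inversion, so in horospherical coordinates it becomes an exponential on $A$. Any $D$ acts on such functions through its radial part along $A$, and by the definition of the Harish-Chandra homomorphism this radial part is $\chi_\lambda(D)$. Integrating over $K$ and using $G$-invariance of $D$ then gives $D\phi_\lambda=\chi_\lambda(D)\phi_\lambda$. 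Bi-$K$-invariance is clear: left $K$-invariance follows from invariance of $dk$, and right $K$-invariance from $H(kg)=H(g)$. Finally, $\phi_\lambda(e)=1$.

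\emph{Uniqueness in (i).} Suppose $f$ is bi-$K$-invariant with $Df=\chi_\lambda(D)f$ for all $D$. Then for $x\in G$ the function $F(y)=\int_K f(xky)\,dk$ is again a joint eigenfunction, and it is bi-$K$-invariant in $y$. Because $\mathbb{D}(G/K)$ contains the elliptic Laplacian, such an $F$ is real analytic. A Taylor argument shows it is determined by its value at $e$, since all derivatives at $eK$ are obtained from invariant operators applied to $F$. Hence $F(y)=f(x)\,\phi(y)$ for any normalized joint eigenfunction $\phi$, which gives the functional equation $\int_K f(xky)\,dk=f(x)f(y)$. Comparing two solutions $f$ and $\phi_\lambda$ through this equation, together with the Taylor-series argument at $e$, yields $f=\phi_\lambda$.

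The main technical point is this step: showing that joint eigenfunctions are determined by their value at the origin. One must check that the symmetric algebra, after averaging over $K$, reaches all $K$-invariant derivatives at $o$.

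\emph{Part (ii).} If $\lambda\in W\mu$, then $\chi_\lambda=\chi_\mu$ by $W$-invariance in Theorem~\ref{thm:HC}, so uniqueness gives $\phi_\lambda=\phi_\mu$. Conversely, if $\phi_\lambda=\phi_\mu$, the eigenvalues agree: $p(\lambda)=p(\mu)$ for every $W$-invariant polynomial $p$. Invariant polynomials separate $W$-orbits in $\mathfrak a_\C^\ast$, because $W$ is finite and one can form the polynomial $\prod_w(\langle x-w\lambda,v\rangle)$ symmetrized. Hence $\lambda\in W\mu$.

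\emph{Part (iii).} The second identity holds because conjugating the integrand gives $e^{-(\overline\lambda+\rho)H(\cdot)}$. For the first, check that $g\mapsto\phi_\lambda(g^{-1})$ is a bi-$K$-invariant joint eigenfunction equal to $1$ at $e$, with eigenvalues $\chi_{-\lambda}(D)$. This uses that inversion intertwines $D$ with an operator whose Harish-Chandra symbol is $\chi_{-\lambda}$, in the spirit of Lemma~\ref{la:HCadjoint}. Uniqueness then gives $\phi_\lambda(g^{-1})=\phi_{-\lambda}(g)$. Alternatively, one can use the integral formula $\phi_\lambda(g)=\int_K e^{(\lambda-\rho)H(gk)}\,dk$, which follows from the standard change of variables $k\mapsto k(g^{-1}k)$ with Jacobian $e^{-2\rho H(\cdot)}$.
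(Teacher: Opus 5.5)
Your proposal is correct and is essentially the standard argument behind \cite[Thm.~IV.4.3]{gaga}, which the paper only cites. Existence comes from $K$-averaging the horocyclic plane waves. Uniqueness comes from analyticity together with the fact that every Taylor coefficient at $o$ of a $K$-invariant function equals $(DF)(o)$ for some $D\in\mathbb{D}(G/K)$, via $K$-averaging and symmetrization from $S(\mathfrak{p})^K$. Part (ii) follows from separation of $W$-orbits by invariant polynomials, and part (iii) from the change of variables $k\mapsto$ ($K$-part of $g^{-1}k$) with Jacobian $e^{-2\rho H(g^{-1}k)}$.

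Two small remarks. First, the Taylor argument already yields $f=\phi_\lambda$ directly, so the functional-equation detour is superfluous. Second, in (iii) the change-of-variables route is the one that actually closes the argument, because the claimed intertwining of inversion with an operator of symbol $\chi_{-\lambda}$ is left unproved.
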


The following expansion of $\phi_\lambda$ will be important to estimate the resolvent kernels.
We first introduce its ingredients.

Let $\Lambda = \N_0 \Pi$ and $\tilde \Lambda = \Z \Pi$.
Define $\Gamma_\mu(\lambda)$ inductively for $\mu\in \Lambda$ by $\Gamma_0(\lambda)=1$
and
\[
	(\langle \mu, \mu \rangle - 2  \langle \mu, \lambda \rangle)\,\Gamma_\mu(\lambda)
	= 2 \sum_{\alpha \in \Sigma^+} m_\alpha \sum_{k \ge 1}
	\Gamma_{\mu - 2k\alpha}(\lambda)
	(\langle \mu + \rho - 2k\alpha, \alpha \rangle -  \langle \alpha, \lambda \rangle)
\]
for $\lambda\notin \sigma_\mu = \{\lambda\in \mathfrak{a}_\C^\ast\colon \langle \mu,\mu\rangle = 2 \langle\lambda,\mu\rangle\}$.
For $\lambda\notin \bigcup _{\mu\in \Lambda \setminus \{0\}} \sigma_\mu$
and in particular if $\Re \lambda \in - \mathfrak{a}_+^\ast$
the function
\[
	\Phi_\lambda (g) = e ^{(\lambda-\rho)(\mu_+(g))} \sum_{\mu\in \Lambda} {\Gamma_\mu(\lambda) e ^{-\mu(\mu_+(g))}}
\]
is well-defined for $\mu_+(g) \in \mathfrak{a} _{++}$ (the interior of $\mathfrak{a}_+$)
by \cite[Lemma~IV.5.3]{gaga}.
The function $\Phi_\lambda$ is built as a perturbation of $e ^{(\lambda-\rho)\mu_+(g)}$
satisfying $D\Phi_\lambda = \chi_\lambda(D) \Phi_\lambda$.
The spherical function $\phi_\lambda$ is a linear combination of $\Phi_{w\lambda}$, $w\in W$.
The coefficients are given by the Harish-Chandra $\mathbf c$-function
which is defined by
\[
	\mathbf c(\lambda)= \int_{\overline N}^{} e ^{-(\lambda+\rho)H(\overline{n})} \: d{\overline{n}} 
	,
	\quad 
	\Re \lambda\in \mathfrak{a}_{++}^\ast.
\]
Here the Haar measure on $\overline{N}$ is normalized by
$\int_{\overline{N}}^{} e ^{-2\rho H(\overline{n})} \: d{\overline{n}} =1$.
The $\mathbf c$-function initially defined on $\Re \lambda\in \mathfrak{a} _{++}^\ast$
has meromorphic continuation to $\mathfrak{a}_\C^\ast$.
In fact, $\mathbf c (\lambda)$ can be written explicitly in terms of $\Gamma$-functions
with arguments depending only on $\langle \lambda,\alpha\rangle$ and $m_\alpha$ for $\alpha\in \Sigma$
\cite[Thm.~IV.6.14]{gaga}.
The expansion of $\phi_\lambda$ now reads as follows.
\begin{theorem}[{\cite[Thm.~IV.5.5]{gaga}}]
	\label{thm:sphericalexpansion}
	If $s\lambda\notin \bigcup_{\mu\in \Lambda\setminus \{0\}} \sigma_\mu$
	and $s\lambda-w \lambda \notin \tilde \Lambda$ for all $s\neq w\in W$,
	then
	\[
		\phi_\lambda(e^H) = \sum_{w\in W} \mathbf c(w\lambda) \Phi_{w \lambda} (e^H)
	\]
	for all $H\in \mathfrak{a} _{++}$.
\end{theorem}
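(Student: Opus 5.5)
The statement is Harish-Chandra's expansion of the spherical function, and I would prove it in the standard way: show that $\phi_\lambda$ restricted to $\exp(\mathfrak{a}_{++})$ lies in the space of joint eigenfunctions spanned by the $\Phi_{w\lambda}$, and then identify the coefficients through asymptotics.

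\textbf{Step 1 (radial system).} Restricted to $\exp(\mathfrak{a}_{++})$, the radial parts of the operators $D\in\mathbb{D}(G/K)$ form a holonomic system of differential equations on $\mathfrak{a}_{++}$. Its solution space for the eigencharacter $\chi_\lambda$ has dimension at most $|W|$. Each $\Phi_{w\lambda}$ solves this system, since $\chi_{w\lambda}=\chi_\lambda$. This uses that $w\lambda$ avoids every $\sigma_\mu$, so that the recursion for $\Gamma_\mu(w\lambda)$ makes sense and the series converges by \cite[Lemma~IV.5.3]{gaga}.

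\textbf{Step 2 (independence).} The leading exponents $w\lambda-\rho$ are pairwise distinct modulo $\tilde\Lambda$, by the hypothesis $s\lambda-w\lambda\notin\tilde\Lambda$. A comparison of exponential terms $e^{(w\lambda-\rho-\mu)(H)}$ then shows that the $\Phi_{w\lambda}$ are linearly independent. Hence they form a basis of the solution space. Since $\phi_\lambda$ is an eigenfunction of every $D$, we get
\[
\phi_\lambda=\sum_{w} c_w(\lambda)\Phi_{w\lambda}.
\]

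\textbf{Step 3 (coefficients).} I would first take $\Re\lambda\in\mathfrak{a}_{++}^\ast$ and let $H\to\infty$ along a ray in $\mathfrak{a}_{++}$. In the integral defining $\phi_\lambda$, substitute the $\overline{N}$-parametrization of $K/M$, so that
\[
\phi_\lambda(e^H)=\int_{\overline N}e^{-(\lambda+\rho)H(e^{-H}\overline n)}\cdots\,d\overline n.
\]
Using dominated convergence one obtains
\[
e^{(\rho-\lambda)(H)}\phi_\lambda(e^H)\to\mathbf{c}(\lambda).
\]
On the other side of the expansion, $e^{(\rho-\lambda)H}\Phi_{w\lambda}(e^H)$ tends to $\delta_{w,1}$ in the appropriate sense, because $\Re(w\lambda-\lambda)<0$ along regular directions for $w\ne1$. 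This gives $c_1=\mathbf{c}(\lambda)$.

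Applying the same argument to $w\lambda$, and using $\phi_\lambda=\phi_{w\lambda}$, gives $c_w(\lambda)=\mathbf{c}(w\lambda)$ whenever $\Re(w\lambda)\in\mathfrak{a}^\ast_{++}$. For general $\lambda$ I would argue by analytic continuation. Both sides of the expansion are meromorphic in $\lambda$ on the set where the hypotheses hold: on the left by uniqueness of the coefficients, on the right by the meromorphic continuation of $\mathbf{c}$. The identity then extends to all admissible $\lambda$.

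\textbf{Main obstacle.} The hardest step is Step 3 in general position. One must justify that the coefficients $c_w(\lambda)$ depend holomorphically on $\lambda$. One must also handle the limit when $\Re\lambda$ is not strictly dominant, since then the exponentials do not separate. To get holomorphy, I would obtain $c_w$ by solving a linear system built from finitely many Taylor data of $\phi_\lambda$ and of the $\Phi_{w\lambda}$ at a fixed point of $\mathfrak{a}_{++}$. The $\Phi_{w\lambda}$ are independent there by Step 2, so Cramer's rule makes the $c_w$ meromorphic. The identity theorem then finishes the argument.
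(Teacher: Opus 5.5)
The paper does not prove this statement; it quotes it from \cite[Thm.~IV.5.5]{gaga}. Your outline is the standard Harish-Chandra argument behind that result, and it is correct: the joint-eigenfunction system on $\exp(\mathfrak{a}_{++})$ has a solution space of dimension at most $|W|$, the $\Phi_{w\lambda}$ are independent because their exponents are non-congruent modulo $\tilde\Lambda$, the coefficient $\mathbf c(\lambda)$ comes from the limit of $e^{(\rho-\lambda)(H)}\phi_\lambda(e^H)$ for $\Re\lambda\in\mathfrak{a}^\ast_{++}$, and then $c_w(\lambda)=c_1(w\lambda)$ follows from $\phi_{w\lambda}=\phi_\lambda$ together with meromorphic continuation over the connected admissible set.

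The one step you leave implicit is the integrable majorant for dominated convergence. It comes from $0\le\nu(H(a\overline{n}a^{-1}))\le\nu(H(\overline{n}))$ for dominant $\nu$ and $a\in\exp\mathfrak{a}_+$, applied to the splitting $\Re\lambda-\rho=(\Re\lambda-\epsilon\rho)-(1-\epsilon)\rho$. This gives the bound $e^{-(1+\epsilon)\rho(H(\overline{n}))}$, whereas the cruder bound $e^{-\rho(H(\overline{n}))}$ is not integrable.
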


To estimate integrals over $\phi_\lambda$
we will need the following two lemmas.

\begin{lemma}[{\cite[Prop.~IV.7.2]{gaga}}]
	\label{la:estc}
	For suitable constants $C_1$ and $C_2$,
	\[
		|\mathbf c (\lambda)|^{-1} \leq C_1 +C_2 |\lambda|^{\frac 12 \dim N}
	\]
	for $\Re \lambda \in  \mathfrak{a}_+^\ast$.
\end{lemma}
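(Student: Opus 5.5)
The bound follows from the Gindikin--Karpelevich product formula for the $\mathbf c$-function (cf.\ \cite[Thm.~IV.6.14]{gaga}):
\[
	\mathbf c(\lambda) = c_0 \prod_{\alpha\in \Sigma_0^+}
	\frac{2^{-\langle\lambda,\alpha_0\rangle}\,\Gamma(\langle \lambda,\alpha_0\rangle)}
	{\Gamma\bigl(\tfrac12(\tfrac12 m_\alpha +1+\langle\lambda,\alpha_0\rangle)\bigr)\,
	\Gamma\bigl(\tfrac12(\tfrac12 m_\alpha + m_{2\alpha}+\langle\lambda,\alpha_0\rangle)\bigr)},
\]
where $\Sigma_0^+$ is the set of indivisible positive roots and $\alpha_0=\alpha/\langle\alpha,\alpha\rangle$. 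For $\Re\lambda\in\mathfrak{a}_+^\ast$ we have $\Re\langle\lambda,\alpha_0\rangle\ge 0$ for every positive root $\alpha$. So it suffices to prove a one-variable estimate for each factor and then multiply.

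For a single $\alpha$, write $z=\langle\lambda,\alpha_0\rangle$ with $\Re z\ge 0$. The reciprocal of the factor is
\[
	f_\alpha(z)= 2^{z}\,\frac{\Gamma(\frac12(\frac12 m_\alpha+1+z))\,\Gamma(\frac12(\frac12 m_\alpha+m_{2\alpha}+z))}{\Gamma(z)}.
\]
We need $|f_\alpha(z)|\le C(1+|z|)^{\frac12(m_\alpha+m_{2\alpha})}$ on $\Re z\ge 0$.

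\emph{Bounded region $|z|\le R$.} The numerator gammas are entire in the closed half-plane, since their arguments have real part $>0$. The function $1/\Gamma(z)$ is entire. Hence $f_\alpha$ is continuous on the compact set $\{|z|\le R,\ \Re z\ge0\}$ and therefore bounded there.

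\emph{Large $|z|$.} We use Stirling's formula $\log\Gamma(w)=(w-\frac12)\log w - w+O(1)$, which holds uniformly in $|\arg w|\le \pi-\delta$ as $|w|\to\infty$; here all arguments have $\Re\ge 0$. Alternatively one can use the cleaner consequence
\[
	\frac{\Gamma(w+a)}{\Gamma(w+b)}\sim w^{a-b}.
\]
Apply the duplication formula $\Gamma(z)=\pi^{-1/2}2^{z-1}\Gamma(\frac z2)\Gamma(\frac{z+1}2)$. This cancels the $2^{z}$ up to a constant and gives
\[
	f_\alpha(z)=c\,\frac{\Gamma(\frac z2+\frac{m_\alpha}4+\frac12)}{\Gamma(\frac z2+\frac12)}\cdot
	\frac{\Gamma(\frac z2+\frac{m_\alpha}4+\frac{m_{2\alpha}}2)}{\Gamma(\frac z2)}.
\]
By the ratio asymptotics, uniformly for $\Re z\ge0$ and $|z|$ large,
\[
	|f_\alpha(z)|\le C|z|^{\frac{m_\alpha}4+\frac{m_\alpha}4+\frac{m_{2\alpha}}2}
	= C|z|^{\frac12(m_\alpha+m_{2\alpha})}.
\]

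\emph{Combining the factors.} Since $|z|\le C|\lambda|$, taking the product over $\alpha\in\Sigma_0^+$ gives
\[
	|\mathbf c(\lambda)|^{-1}\le C\prod_\alpha(1+|\lambda|)^{\frac12(m_\alpha+m_{2\alpha})}
	=C(1+|\lambda|)^{\frac12\sum_{\alpha\in\Sigma^+}m_\alpha}.
\]
The exponent is $\frac12\sum_{\alpha\in\Sigma^+}m_\alpha=\frac12\dim N$. Finally, $(1+|\lambda|)^d\le 2^d\max(1,|\lambda|^d)\le C_1+C_2|\lambda|^d$, which is the claimed bound.

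The main obstacle is the uniformity of Stirling's ratio asymptotics up to the imaginary axis $\Re z=0$. The arguments $\frac z2$ and $\frac z2+\frac12$ lie in $\Re w\ge 0$, which is inside a sector $|\arg w|\le\frac\pi2$ away from the negative real axis, so the standard uniform Stirling bound applies there. The zero of $1/\Gamma(z/2)$ at $z=0$ causes no trouble, because that point lies in the bounded region, where continuity already gives boundedness.
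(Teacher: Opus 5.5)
Your argument is correct: the Gindikin--Karpelevich product formula, the duplication formula to cancel the factor $2^{z}$, and uniform Stirling asymptotics for the Gamma quotients on $\Re z\ge 0$ (with the compact region handled by continuity) give exactly the exponent $\frac12\sum_{\alpha\in\Sigma^+}m_\alpha=\frac12\dim N$. The paper gives no proof of its own and simply cites \cite[Prop.~IV.7.2]{gaga}; your route is essentially the standard proof of that cited result.
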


\begin{lemma}
	\label{la:estPhi}
	For every $\varepsilon>0$ there is a constant $C_\varepsilon$
	such that, for every $H$ with $\alpha(H)\geq \varepsilon$ for all $\alpha\in \Pi$,
	and $\Re\lambda \in - \mathfrak{a} _{+}^\ast$,
	\[
		|\Phi_\lambda (e^H) | \leq C_\varepsilon e ^{(\Re \lambda -\rho) (H)}
	\]
\end{lemma}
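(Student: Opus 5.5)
Since $e^{(\lambda-\rho)(H)}$ factors out of $\Phi_\lambda(e^H)$, the claim reduces to a uniform bound on the series:
\[
	|\Phi_\lambda(e^H)| \le e^{(\Re\lambda-\rho)(H)} \sum_{\mu\in\Lambda} |\Gamma_\mu(\lambda)|\, e^{-\mu(H)},
\]
and it suffices to show
\[
	\sup\Big\{ \sum_{\mu\in\Lambda}|\Gamma_\mu(\lambda)| e^{-\mu(H)} \colon \Re\lambda\in-\mathfrak a_+^\ast,\ \alpha(H)\ge\varepsilon\ \forall\alpha\in\Pi\Big\}<\infty.
\]
Write $\mu=\sum_{\alpha\in\Pi} n_\alpha\alpha$ with $n_\alpha\in\N_0$ and set $m(\mu)=\sum_\alpha n_\alpha$. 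For $\alpha(H)\ge\varepsilon$ on $\Pi$ we have $\mu(H)\ge \varepsilon\, m(\mu)$, so $e^{-\mu(H)}\le e^{-\varepsilon m(\mu)}$. The number of $\mu$ with $m(\mu)=m$ grows only polynomially in $m$. Hence the whole task is a bound $|\Gamma_\mu(\lambda)|\le C_\delta e^{\delta m(\mu)}$ for some $\delta<\varepsilon$ (indeed for every $\delta>0$), uniformly in $\lambda$ with $\Re\lambda\in-\mathfrak a_+^\ast$. This is the uniform version of the estimate in \cite[Lemma~IV.5.3]{gaga}, and I would adapt Gangolli's argument from there.

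The key input is a lower bound on the denominator of the recursion. For $\Re\lambda\in-\mathfrak a_+^\ast$ and $0\ne\mu\in\Lambda$, we have $\langle\mu,\Re\lambda\rangle\le 0$ because $\mu$ is a nonnegative combination of simple roots and $-\Re\lambda$ lies in the dual chamber. Therefore
\[
	|\langle\mu,\mu\rangle-2\langle\mu,\lambda\rangle| \ge \langle\mu,\mu\rangle-2\langle\mu,\Re\lambda\rangle \ge \langle\mu,\mu\rangle \ge c\, m(\mu)^2 .
\]
For the numerator, the factor $\langle\mu+\rho-2k\alpha,\alpha\rangle-\langle\alpha,\lambda\rangle$ is bounded by $C(m(\mu)+|\lambda|)$. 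The $|\lambda|$ term is the obstacle, since $\lambda$ is unbounded and a naive estimate is not uniform. The fix is to use the full denominator rather than just $\langle\mu,\mu\rangle$. The quantity $|\langle\mu,\mu\rangle-2\langle\mu,\lambda\rangle|$ dominates $\langle\mu,\mu\rangle - 2\langle\mu,\Re\lambda\rangle$ and also $2|\langle\mu,\Im\lambda\rangle|$. Thus the ratio
\[
	\frac{|\langle\alpha,\lambda\rangle|}{|\langle\mu,\mu\rangle-2\langle\mu,\lambda\rangle|}
\]
needs to be controlled by $C/m(\mu)$ up to a constant. That is clear when $\lambda$ is nearly parallel to $\mu$, but it can fail for $\lambda$ orthogonal to $\mu$. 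Gangolli's proof handles this by an induction with a weighted bound. I would first try to prove $|\Gamma_\mu(\lambda)|\le C\, m(\mu)^{N}$ with $N$ independent of $\lambda$, following \cite[proof of Lemma~IV.5.3]{gaga}, and check that each constant there depends only on the lower bound of the denominator, which we have uniformly.

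If the uniform ratio bound fails, I would use a fallback. For fixed $\lambda$, \cite[Lemma~IV.5.3]{gaga} gives convergence of the series, and the coefficients $\Gamma_\mu(\lambda)$ are rational functions of $\lambda$ without poles on the region $\Re\lambda\in -\mathfrak a_+^\ast$. One can then use the recursion to get polynomial-in-$|\lambda|$ growth, $|\Gamma_\mu(\lambda)|\le C(1+|\lambda|)^{m(\mu)} \cdots$. That fallback is not uniform, so the main obstacle really is uniformity in $\Im\lambda$. My plan is to exploit that $\Gamma_\mu(\lambda)$ is, up to bounded factors, controlled by products of terms of the form $\langle\alpha,\lambda\rangle/(\langle\mu',\mu'\rangle-2\langle\mu',\lambda\rangle)$ along chains $\mu'$. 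Each such term is bounded using that $\alpha$ lies in the positive cone generated by $\Pi$, in which $\mu'$ is also expressed.

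Once the coefficient bound $|\Gamma_\mu(\lambda)|\le C_\delta e^{\delta m(\mu)}$ is in place, we sum over $\mu$ using $e^{-\mu(H)}\le e^{-\varepsilon m(\mu)}$ and the polynomial count of $\mu$ with given $m(\mu)$. The resulting geometric series is finite, which gives the constant $C_\varepsilon$ and the stated bound.
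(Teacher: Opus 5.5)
\textbf{There is a genuine gap.} Your outer reduction matches the paper's: factor out $e^{(\Re\lambda-\rho)(H)}$, bound $|\Gamma_\mu(\lambda)|\le Ce^{\mu(H_0)}$ with $\alpha(H_0)=\varepsilon/2$ (equivalently your $C_\delta e^{\delta m(\mu)}$), and sum a geometric series. Your denominator bound $|\langle\mu,\mu\rangle-2\langle\mu,\lambda\rangle|\ge\langle\mu,\mu\rangle$ and the summation are fine. The gap is the coefficient bound itself, uniform over all $\lambda$ with $\Re\lambda\in-\mathfrak a_+^\ast$ and $\Im\lambda\in\mathfrak a^\ast$ arbitrary. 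This is the entire content of the lemma, and you do not prove it. The paper does not prove it either: it quotes it as Gangolli's estimate \cite[Lemma~IV.5.6]{gaga}, after which the argument is exactly yours. Lemma~IV.5.3 of \cite{gaga}, which you propose to adapt, is a fixed-$\lambda$ convergence statement. The mechanism you suggest for making it uniform does not work.

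Your claim that each chain factor $\langle\alpha,\lambda\rangle/(\langle\mu',\mu'\rangle-2\langle\mu',\lambda\rangle)$ is bounded, because $\alpha$ and $\mu'$ lie in the same positive cone, is false. Positivity only controls $\Re\lambda$, and $\Im\lambda$ can be orthogonal to $\mu'$ without being orthogonal to $\alpha$. In fact no triangle-inequality induction on this recursion can be uniform. Take $G=\mathrm{SL}_2(\mathbb R)\times\mathrm{SL}_2(\mathbb R)$ with orthogonal simple roots $\alpha_1,\alpha_2$, and set $\mu=2\alpha_1+2\alpha_2$ and $\lambda=i\eta$ with $\langle\alpha_1,\eta\rangle=t=-\langle\alpha_2,\eta\rangle$. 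Then $\langle\mu,\lambda\rangle=0$, and the recursion reads
\[
\langle\mu,\mu\rangle\,\Gamma_\mu(\lambda)=2m_{\alpha_1}\Gamma_{2\alpha_2}(\lambda)\bigl(\langle\rho,\alpha_1\rangle-it\bigr)+2m_{\alpha_2}\Gamma_{2\alpha_1}(\lambda)\bigl(\langle\rho,\alpha_2\rangle+it\bigr),
\]
where $\Gamma_{2\alpha_j}(\lambda)\to m_{\alpha_j}/2$ as $t\to\infty$. Each of the two terms has size about $|t|$ and they cancel; indeed $\Gamma_\mu=\Gamma_{2\alpha_1}\Gamma_{2\alpha_2}$ is bounded. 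But taking absolute values gives a bound growing like $|t|$.

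So you need an additional idea. Either cite \cite[Lemma~IV.5.6]{gaga} as the paper does, or remove $\lambda$ from the numerators. One way to do the latter is to write $\Phi_\lambda=\delta^{-1/2}\Psi$ with $\delta=\prod_{\alpha\in\Sigma^+}\sinh^{m_\alpha}\alpha$. Conjugating the radial Laplacian by $\delta^{1/2}$ gives the flat Laplacian minus the $\lambda$-independent potential $\delta^{-1/2}L_{\mathfrak a}(\delta^{1/2})=|\rho|^2+\sum_{\nu\neq 0}v_\nu e^{-\nu}$. Hence the coefficients of $\Psi(H)=e^{\lambda(H)}\sum_\mu b_\mu e^{-\mu(H)}$ satisfy $(\langle\mu,\mu\rangle-2\langle\mu,\lambda\rangle)b_\mu=\sum_{\nu\neq0}v_\nu b_{\mu-\nu}$ with $v_\nu$ independent of $\lambda$. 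Your lower bound on the denominator then closes the induction uniformly in $\lambda$.
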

\begin{proof}
	Let $H_0\in \mathfrak{a}_{++}$ with $\alpha(H_0)=\varepsilon/2$ for all $\alpha\in \Pi$.
	By \cite[Lemma~IV.5.6]{gaga}
	there is a constant $C$ such that
	$|\Gamma_\mu(\lambda)| \leq C e ^{\mu(H_0)}$
	for all $\mu\in \Lambda$ and $\lambda\in - \mathfrak{a}_+^\ast + i \mathfrak{a}^\ast$.
	To prove the lemma we need to see that
	\[
		\sum_{{\mu\in \Lambda}} {\Gamma_\mu(\lambda) e^{-\mu(H)}}
	\]
	is bounded.
	We have $|\Gamma_\mu(\lambda) e ^{-\mu(H)}|\leq C e^{-\mu(H-H_0)}$.
	It follows that
	\begin{align*}
		\left |\sum_{{\mu\in \Lambda}} {\Gamma_\mu(\lambda) e^{-\mu(H)}}\right|
	&	\leq C \sum_{{(n_\alpha)\in \N_0^\Pi}} e^{-\sum_{\alpha\in \Pi} n_\alpha \alpha(H-H_0)}
	\leq C \sum_{{(n_\alpha)\in \N_0^\Pi}} e^{-\sum_{\alpha\in \Pi} n_\alpha \varepsilon/2}
	\\
	&\leq C \sum_{{(n_\alpha)\in \N_0^\Pi}} \prod_{\alpha\in \Pi}e^{- n_\alpha \varepsilon/2}
	\leq C \prod_{\alpha\in \Pi} \sum_{{n_\alpha\in \N_0}i} e^{- n_\alpha \varepsilon/2}
	\\
	&= C \prod_{\alpha\in \Pi} \frac 1 {1-e^{-\varepsilon/2}} \eqqcolon C_\varepsilon
	\end{align*}
	This completes the proof.
\end{proof}

\subsection{Fourier-Helgason transform}%
\label{sub:Fourier-Helgason transform}
The Fourier-Helgason transform gives a spectral resolution of $\mathbb{D}(G/K)$
on $L^2(G/K)$.
Let $e_{\lambda,kM}(gK)=e^{-(\lambda+\rho)H(g^{-1} k)}$
for $\lambda\in \mathfrak{a}_\C^\ast$ and $k\in K$.
Then we have $D e_{\lambda,kM}=\chi_\lambda(D) e_{\lambda,kM}$ by \cite[Lemma~II.5.15]{gaga} for every $D\in\mathbb D(G/K)$.
Thus the functions $e _{\lambda,kM}$ are the replacements on $G/K$
of the exponentials $e ^{\langle\lambda,x\rangle}$ on $\R^n$.
The Helgason-Fourier transform is therefore defined by
\begin{align*}
	\mathcal Ff(\lambda,kM)= \int_{G/K} f(gK)e^{(\lambda-\rho)H(g^{-1} k)}d(gK)
\end{align*}
for a sufficiently nice function $f\colon G/K \to \C$.

\begin{lemma}\label{la:FourierIntertwiner}
	The Fourier-Helgason transform satisfies
	\[
		\mathcal F(Df)(\lambda,kM)=\chi_{\lambda}(D) \mathcal Ff(\lambda,kM)
	\]
	for every $D\in \mathbb D(G/K)$.
\end{lemma}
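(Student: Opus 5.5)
The idea is to move $D$ from $f$ onto the plane wave $e_{\lambda,kM}$, using that $G$-invariant operators have invariant formal adjoints. First I work with $f\in C_c^\infty(G/K)$; the identity for more general nice $f$ then follows by density and continuity of $\mathcal F$. Put $\psi(gK)\coloneqq e^{(\lambda-\rho)H(g^{-1}k)}$. With this notation $\mathcal F f(\lambda,kM)=\int_{G/K} f\,\psi$. Comparing with the definition $e_{\mu,kM}(gK)=e^{-(\mu+\rho)H(g^{-1}k)}$, we get $\psi = e_{-\lambda,kM}$.

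The main step is integration by parts. Let $D^t$ be the formal transpose of $D$ with respect to the $G$-invariant measure $d(gK)$, so that $\int (Df)\,\psi=\int f\,D^t\psi$ for $f$ compactly supported and $\psi$ smooth. The operator $D^t$ is again $G$-invariant, because $L_g$ preserves the measure. It is related to the Hilbert adjoint by $D^t u=\overline{D^\ast \bar u}$. Applying $D^t$ to the joint eigenfunction $\psi=e_{-\lambda,kM}$ gives $D^t\psi=\chi_{-\lambda}(D^t)\psi$. It remains to compute $\chi_{-\lambda}(D^t)$.

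To get $\chi_\mu(D^t)$ I use Lemma~\ref{la:HCadjoint}. Let $\bar D$ denote the operator $u\mapsto \overline{D\bar u}$; this is also in $\mathbb{D}(G/K)$. Then
\[
\chi_\mu(D^t)=\chi_\mu(\overline{D^\ast})=\overline{\chi_{\bar\mu}(D^\ast)}=\overline{\overline{\chi_{-\mu}(D)}}=\chi_{-\mu}(D).
\]
Two facts are used here. The first is Lemma~\ref{la:HCadjoint}. The second is the identity $\chi_\mu(\bar D)=\overline{\chi_{\bar\mu}(D)}$. This follows by conjugating the eigen-equation $D e_{\bar\mu,kM}=\chi_{\bar\mu}(D)e_{\bar\mu,kM}$ and noting that $\overline{e_{\bar\mu,kM}}=e_{\mu,kM}$, since $H$ is real.

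Setting $\mu=-\lambda$ gives $D^t\psi=\chi_\lambda(D)\psi$. Therefore
\[
\mathcal F(Df)(\lambda,kM)=\int f\,D^t\psi=\chi_\lambda(D)\,\mathcal Ff(\lambda,kM).
\]

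A quicker alternative avoids adjoints altogether. One can use that the transpose $D\mapsto D^t$ corresponds under $\operatorname{HC}$ to $p(\lambda)\mapsto p(-\lambda)$, which is the content of \cite[Cor.~5.3]{gaga}. I expect the main obstacle to be exactly this bookkeeping of conjugations and signs in passing between $D^\ast$, $\bar D$ and $D^t$. The integration by parts itself is unproblematic for compactly supported $f$.
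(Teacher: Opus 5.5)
Your argument is correct and is essentially the paper's proof: you move $D$ onto the plane wave by integration by parts and evaluate the resulting eigenvalue with Lemma~\ref{la:HCadjoint}. The only difference is bookkeeping. The paper pairs sesquilinearly, writing the kernel as $\overline{e_{-\overline{\lambda},kM}}$ and using $D^\ast$ directly, which avoids your detour through $D^t=\overline{D^\ast}$ and the extra identity $\chi_\mu(\bar D)=\overline{\chi_{\bar\mu}(D)}$.
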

\begin{proof}
	\begin{align*}\mathcal F(Df)(\lambda,kM)&=\int_{G/K}
		Df(gK)\overline{e_{-\overline\lambda,kM}(gK)}d(gK)=\int_{G/K} f(gK)\overline{D^\ast
		e_{-\overline\lambda,kM}(gK)}d(gK)\\
	&=\int_{G/K}^{} {f(gK) \overline{\chi_{-\overline \lambda}(D^\ast)e_{-\overline\lambda,kM}(gK)}} \: d(gK)=
	\overline{\chi_{-\overline\lambda}(D^\ast)} \mathcal Ff(\lambda,kM).
	\end{align*} 
	By
	Lemma~\ref{la:HCadjoint} $\chi_{\lambda}(D^\ast)=
	\overline{\chi_{-\overline\lambda}(D)}$
	and the lemma follows.
	\end{proof}

	\begin{theorem}[{\cite[Thm.~III.1.5]{HelGeomAna}}]\label{thm:FourierL2}
		The Fourier-Helgason transform is an isometry between $L^2(G/K)$ and $L^2 (i\mathfrak a_+^\ast \times K/M, |\mathbf c(\lambda)|^{-2} d\lambda d(kM))$. Moreover,
		$$\langle f,g\rangle_{L^2(G/K)} = |W|^{-1} \int_{i\mathfrak a^\ast\times K/M} \mathcal Ff(\lambda,kM) \overline {\mathcal Fg(\lambda,kM)} |\mathbf c(\lambda)|^{-2} d\lambda d(kM).
		$$
	\end{theorem}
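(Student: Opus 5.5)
The statement is the Plancherel theorem for the Fourier--Helgason transform, which the paper quotes from \cite[Thm.~III.1.5]{HelGeomAna}. I would not reprove it from scratch. Instead I would reduce it to the spherical (bi-$K$-invariant) Plancherel theorem together with the inversion formula. The plan has three steps.

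\emph{Step 1: inversion formula for nice functions.} For $f\in C_c^\infty(G/K)$ I want
\[
	f(gK)=|W|^{-1}\int_{i\mathfrak a^\ast\times K/M}\mathcal Ff(\lambda,kM)\,e_{\lambda,kM}(gK)\,|\mathbf c(\lambda)|^{-2}\,d\lambda\,d(kM).
\]
To get it, integrate out $k$ using the identity
\[
	\int_K e^{-(\lambda+\rho)H(g^{-1}k)}e^{(-\lambda-\rho)H(h^{-1}k)}\,dk=\phi_{\lambda}(\ldots).
\]
This is the standard product formula: the integral equals $\int_K\phi_\lambda(h^{-1}kg)\,dk$, a spherical function in the pair. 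This reduces the claim to the inversion formula for the spherical transform of the bi-$K$-invariant function $x\mapsto\int_K f(gkx)\,dk$, evaluated at $x=e$.

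\emph{Step 2: spherical inversion.} This is Harish-Chandra's theorem,
\[
	F(e)=|W|^{-1}\int_{i\mathfrak a^\ast}\widehat F(\lambda)\,|\mathbf c(\lambda)|^{-2}\,d\lambda .
\]
I would prove it following Harish-Chandra/Helgason. Use the expansion of Theorem~\ref{thm:sphericalexpansion} to write $\phi_\lambda$ as $\sum_w\mathbf c(w\lambda)\Phi_{w\lambda}$. Pair against $F$, shift the contour, and use the Paley--Wiener estimates for $\widehat F$ together with the decay in Lemma~\ref{la:estPhi}. This localizes everything to a limit as $H\to\infty$ in $\mathfrak a_{++}$, and that limit is computed from the leading term $e^{(\lambda-\rho)H}$. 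Lemma~\ref{la:estc} guarantees that $|\mathbf c|^{-2}$ has polynomial growth, so all integrals converge. The normalisation of the $\overline N$ measure fixes the constant $|W|^{-1}$.

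\emph{Step 3: polarize and extend.} Pair the inversion formula with $\overline{g}$ and apply Fubini; this gives the stated inner product identity on $C_c^\infty$. Extend to $L^2(G/K)$ by density, which yields an isometry. The factor $|W|^{-1}$ converts the integral over $i\mathfrak a^\ast$ into one over $i\mathfrak a_+^\ast$, using the $W$-symmetry $\mathcal Ff(w\lambda,\cdot)$ intertwined by the $W$-invariance of $|\mathbf c|^{-2}$ on $i\mathfrak a^\ast$. Surjectivity onto $L^2(i\mathfrak a_+^\ast\times K/M)$ holds once one notes that the image is closed and dense, using Paley--Wiener.

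The main obstacle is Step 2, the spherical inversion and the exact Plancherel density $|\mathbf c(\lambda)|^{-2}$. It needs delicate control of the $\mathbf c$-function and of the uniformity of the expansion near the walls of $\mathfrak a_+$. In practice I would simply cite Harish-Chandra, Gindikin--Karpelevich and Helgason for this step.
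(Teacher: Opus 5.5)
The paper does not prove this theorem; it quotes Helgason. Your outline follows the cited source's route: inversion by reduction to spherical inversion, then polarization, then $W$-invariance to pass to $i\mathfrak a^\ast_+$, then surjectivity. Two of your steps fail as written, however.

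\emph{Step 1.} Your displayed identity is wrong, and the product formula is the wrong tool. Inserting the definition of $\mathcal Ff$ into the inversion formula, the kernel you must integrate over $K/M$ is $e^{(\lambda-\rho)H(h^{-1}k)}e^{-(\lambda+\rho)H(g^{-1}k)}$. Here the signs in front of $\lambda$ are \emph{opposite}; for $\lambda\in i\mathfrak a^\ast$ this is $e_{\lambda,kM}(gK)\overline{e_{\lambda,kM}(hK)}$. For the $K$-integral to depend only on $h^{-1}g$, the two exponents must add up to $-2\rho$. This compensates the Jacobian $e^{-2\rho H(xk)}$ of $k\mapsto k(xk)$, where $k(y)$ is the $K$-component of $y$ in $G=KAN$. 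Your two exponents add up to $-2(\lambda+\rho)$.

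Moreover, by the functional equation of spherical functions, $\int_K\phi_\lambda(h^{-1}kg)\,dk=\phi_\lambda(h^{-1})\phi_\lambda(g)$. With that kernel you would only recover $\phi_\lambda(g)$ times a transform of the $K$-average of $f$, not $f(gK)$. The identity you need is
\[
\int_{K/M}e^{(\lambda-\rho)H(h^{-1}k)}\,e^{-(\lambda+\rho)H(g^{-1}k)}\,d(kM)=\phi_\lambda(h^{-1}g).
\]
It follows from that change of variables in the integral defining $\phi_\lambda$, and it is the identity from \cite[Lemma~IV.4.4]{gaga} invoked in the proof of Lemma~\ref{la:kernelformulaGK}. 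With it, $\int_{G/K}f(hK)\phi_\lambda(h^{-1}g)\,d(hK)=\int_G F(x)\phi_{-\lambda}(x)\,dx$ for $F(x)=\int_K f(gkx)\,dk$, and your reduction to spherical inversion at $x=e$ goes through.

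The same identity also justifies replacing $|W|^{-1}\int_{i\mathfrak a^\ast}$ by $\int_{i\mathfrak a^\ast_+}$. In general $\mathcal Ff(w\lambda,\cdot)\neq\mathcal Ff(\lambda,\cdot)$, but for $\lambda\in i\mathfrak a^\ast$
\[
\int_{K/M}\mathcal Ff(\lambda,kM)\overline{\mathcal Fg(\lambda,kM)}\,d(kM)=\iint f(xK)\overline{g(yK)}\,\phi_\lambda(x^{-1}y)\,d(xK)\,d(yK),
\]
and this is $W$-invariant because $\phi_{w\lambda}=\phi_\lambda$.

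\emph{Surjectivity.} Closedness of the range is automatic. Density of $\mathcal F(C_c^\infty(G/K))$ in $L^2(i\mathfrak a^\ast_+\times K/M,|\mathbf c(\lambda)|^{-2}d\lambda\,d(kM))$, however, does not follow from the spherical Paley--Wiener theorem, which only controls the $\lambda$-variable. Let $\psi$ be orthogonal to the range. The multiplier identity $\mathcal F(f\ast F)=\widehat F\cdot\mathcal Ff$ for bi-$K$-invariant $F$, together with density of the $\widehat F$, only gives the following: for almost every $\lambda$,
\[
\int_{K/M}e^{(\lambda-\rho)H(h^{-1}k)}\,\overline{\psi(\lambda,kM)}\,d(kM)=0\quad\text{for all }h\in G.
\]
To conclude $\psi(\lambda,\cdot)=0$ you need a missing ingredient. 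Namely, the Poisson transform must be injective on $L^2(K/M)$ for almost every $\lambda\in i\mathfrak a^\ast$. Equivalently, the $K$-fixed vector must be cyclic in the unitary spherical principal series, which follows for instance from Kostant's irreducibility theorem. Note that the paper only uses the inner-product identity (in Lemma~\ref{la:kernelformulaGK}). So your corrected Steps~1--3, without the surjectivity claim, would be all it needs.
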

	\subsection{Resolvent estimates on \texorpdfstring{$G/K$}{G/K}}%
	\label{sub:Resolvent estimates}
	In this section we analyze the resolvent kernels of operators $D\in \mathbb{D}(G/K)$.
	We first determine these kernels by the Fourier-Helgason transform.
	\begin{lemma}
		\label{la:kernelformulaGK}
		Let $D\in \mathbb{D}(G/K)$ and $z\notin \sigma(D) = \{\chi_\lambda(D)\colon \lambda\in i \mathfrak{a}^\ast\}$.
		Then $(D-z) ^{-1}$ is given by the right convolution with
		the $K$-invariant function
		\begin{align*}
			k(D,z,gK) &\coloneqq 
			|W|^{-1} \int_{i \mathfrak{a}^\ast}(\chi_\lambda(D)-z)^{-1}
			\phi_{-\lambda}(g) |\mathbf c(\lambda)|^{-2} d\lambda\\
				  &=
				  \int_{i \mathfrak{a}^\ast}(\chi_{\lambda}(D)-z)^{-1}
				  \Phi_{-\lambda}(h) \mathbf c(\lambda) ^{-1} d\lambda
		\end{align*}
		on $G/K$,
		i.e.~
		\[
			(D-z) ^{-1} f(gK)
			= (f \ast k(D,z))(gK)
			=\int_{G/K}^{} f(hK) k(D,z,h ^{-1}gK) \: d{hK} .
		\]
	\end{lemma}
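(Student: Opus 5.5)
The plan has three steps: spectral multiplier, convolution kernel via Fourier inversion, and the passage from $\phi_{-\lambda}$ to $\Phi_{-\lambda}$.

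\textbf{Step 1: the multiplier is bounded.} By Lemma~\ref{la:FourierIntertwiner} and the Plancherel Theorem~\ref{thm:FourierL2}, $D$ acts on $L^2(G/K)$ as multiplication by $\chi_\lambda(D)$, $\lambda\in i\mathfrak{a}^\ast$, on the Fourier side. So $\sigma(D)$ is the closure of $\{\chi_\lambda(D)\colon\lambda\in i\mathfrak{a}^\ast\}$. For $z$ outside it, $(D-z)^{-1}$ is multiplication by $m(\lambda)=(\chi_\lambda(D)-z)^{-1}$. This multiplier is bounded if $\{\chi_\lambda(D)\}$ is closed, which holds for elliptic $D$ by properness. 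In general one can take the statement's $\sigma(D)$ to mean the closure.

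\textbf{Step 2: convolution formula.} For $f\in C_c^\infty(G/K)$, Fourier inversion gives
\[
f(gK)=|W|^{-1}\int_{i\mathfrak{a}^\ast\times K/M}\mathcal Ff(\lambda,kM)e^{-(\lambda+\rho)H(g^{-1}k)}|\mathbf c(\lambda)|^{-2}\,d\lambda\,d(kM).
\]
Hence $(D-z)^{-1}f$ is the same integral with $m(\lambda)$ inserted. Insert the definition of $\mathcal Ff$ and exchange integrals; Fubini is justified once $m$ has polynomial decay, and is otherwise done in the $L^2$ sense, pairing against test functions. The inner $K$-integral is $\int_K e^{(\lambda-\rho)H(h^{-1}k)}e^{-(\lambda+\rho)H(g^{-1}k)}\,dk$. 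The standard identity $\int_K e^{-(\lambda+\rho)H(x^{-1}k)}e^{(\lambda-\rho)H(y^{-1}k)}\,dk=\phi_\lambda(y^{-1}x)$ (proved via $G$-invariance and $\int_K e^{-(\lambda+\rho)H(x^{-1}k)}dk=\phi_\lambda(x)$) rewrites it as $\phi_{-\lambda}(h^{-1}g)$, up to the sign convention and $W$-invariance. This yields $\int f(hK)\,k(D,z,h^{-1}gK)\,d(hK)$ with the first formula for $k$. The integral defining $k$ converges absolutely only when $\chi_\lambda(D)$ grows faster than $|\mathbf c(\lambda)|^{-2}\sim|\lambda|^{\dim N}$ (Lemma~\ref{la:estc}); otherwise it is interpreted as an $L^2$-limit.

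\textbf{Step 3: the $\Phi$ formula.} Here I expect the real work, namely justifying the use of Theorem~\ref{thm:sphericalexpansion} on $i\mathfrak{a}^\ast$. For $\mu_+(g)\in\mathfrak{a}_{++}$ and generic $\lambda\in i\mathfrak{a}^\ast$, Theorem~\ref{thm:sphericalexpansion} gives $\phi_{-\lambda}=\sum_w\mathbf c(-w\lambda)\Phi_{-w\lambda}$. The excluded $\lambda$ form a measure-zero set, and on $i\mathfrak{a}^\ast$ one has $\langle\mu,\mu\rangle\neq 2\langle\lambda,\mu\rangle$ since the right side is imaginary. Use $|\mathbf c(\lambda)|^{2}=\mathbf c(\lambda)\mathbf c(-\lambda)$ for imaginary $\lambda$, which follows from $\overline{\mathbf c(\lambda)}=\mathbf c(\bar\lambda)$, and the $W$-invariance of $\chi_\lambda(D)$ and of $|\mathbf c(w\lambda)|$. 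Substituting $\lambda\mapsto w^{-1}\lambda$ in each of the $|W|$ terms, they coincide, which cancels the factor $|W|^{-1}$. Each term becomes $(\chi_\lambda(D)-z)^{-1}\Phi_{-\lambda}\,\mathbf c(\lambda)^{-1}$.

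The obstacle is that $\Phi_{-\lambda}$ and $\mathbf c(\lambda)^{-1}$ need not be individually integrable, and $\Phi$ can have poles on $i\mathfrak{a}^\ast$. Lemma~\ref{la:estPhi} does apply on $\Re\lambda=0\in-\mathfrak{a}_+^\ast$, and gives the uniform bound $|\Phi_{-\lambda}(e^H)|\le C_\varepsilon e^{-\rho(H)}$. Combined with Lemma~\ref{la:estc}, splitting the sum is legitimate whenever $\deg\chi(D)>\tfrac12\dim N+\dim\mathfrak{a}$. For lower order I would argue distributionally in $g$, or first prove the identity for $(D-z)^{-N}$ and then use analytic continuation or a resolvent identity. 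At first I would only assert the second formula pointwise for $\mu_+(g)\in\mathfrak{a}_{++}$ under this integrability condition.
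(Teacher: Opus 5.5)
Your three steps follow the paper's proof: Fourier inversion/Plancherel, then the $K$-integral identity \cite[Lemma~IV.4.4]{gaga}, then Theorem~\ref{thm:sphericalexpansion} with $|\mathbf c(\lambda)|^2=\mathbf c(\lambda)\mathbf c(-\lambda)$ on $i\mathfrak a^\ast$ and the $W$-symmetrization that cancels $|W|^{-1}$. You also pay more attention to convergence than the paper does. Your threshold $\deg>\tfrac12\dim N+\dim\mathfrak a$ is the correct one for absolute integrability over $i\mathfrak a^\ast\cong\R^r$; the paper's $\dim A-1$ is one short.

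The step that fails is the end of Step~2. Your identity is right, and it evaluates the $K$-integral as $\phi_\lambda(h^{-1}g)$. So the kernel you have actually derived is $|W|^{-1}\int_{i\mathfrak a^\ast}(\chi_\lambda(D)-z)^{-1}\phi_\lambda\,|\mathbf c(\lambda)|^{-2}\,d\lambda$. ``Up to the sign convention and $W$-invariance'' does not turn this into the stated kernel:
\begin{itemize}
\item $W$-invariance gives $\phi_{-\lambda}=\phi_\lambda$ only if $-\lambda\in W\lambda$.
\item The substitution $\lambda\mapsto-\lambda$ replaces $\chi_\lambda(D)$ by $\chi_{-\lambda}(D)=\chi_{-w_0\lambda}(D)$. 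This differs from $\chi_\lambda(D)$ as soon as $\operatorname{HC}(D)$ is not an even polynomial, which can happen exactly when $w_0\neq-1$ (e.g.\ $\mathfrak{sl}_3(\R)$ with a symbol involving the cubic invariant).
\end{itemize}
A quick test: $D$ commutes with left translations, so $(D-z)(f\ast k)=f\ast (D-z)k$, and one needs $(D-z)k=\delta_{eK}$. Since $D\phi_{-\lambda}=\chi_{-\lambda}(D)\phi_{-\lambda}$, the kernel with $\phi_{-\lambda}$ gives instead $|W|^{-1}\int\frac{\chi_{-\lambda}(D)-z}{\chi_\lambda(D)-z}\,\phi_{-\lambda}\,|\mathbf c(\lambda)|^{-2}\,d\lambda$.

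So do not try to close this step. The lemma as stated is off by $\lambda\mapsto-\lambda$, and the paper's proof contains the matching slip. It evaluates $\int_{K/M}e^{(\lambda-\rho)H(g^{-1}k)}e^{-(\lambda+\rho)H(h^{-1}k)}\,d(kM)$ as $\phi_\lambda(h^{-1}g)$, whereas your identity gives $\phi_\lambda(g^{-1}h)$; putting $g=e$ shows the integral is $\phi_\lambda(h)$, not $\phi_{-\lambda}(h)$. What your argument proves, and what should be stated, is
\[
k(D,z,gK)=|W|^{-1}\int_{i\mathfrak a^\ast}(\chi_\lambda(D)-z)^{-1}\phi_\lambda(g)\,|\mathbf c(\lambda)|^{-2}\,d\lambda
=\int_{i\mathfrak a^\ast}(\chi_{-\lambda}(D)-z)^{-1}\Phi_{-\lambda}(g)\,\mathbf c(\lambda)^{-1}\,d\lambda .
\]
This agrees with the lemma whenever $\chi_{-\lambda}(D)=\chi_\lambda(D)$, e.g.\ if $w_0=-1$. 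Your Step~3 goes through verbatim after substituting $\lambda\mapsto-\lambda$, since $(\chi_{-\lambda}(D)-z)^{-1}$ is still $W$-invariant.

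Nothing downstream breaks. In Proposition~\ref{prop:decaykernel} the excluded set becomes $\{\chi_\mu(D)\colon \mu\in[0,1](-w_0\lambda)+i\mathfrak a^\ast\}$. Then:
\begin{itemize}
\item $\delta'_{-w_0\lambda}=\delta'_\lambda$, because $\mu_+(\gamma^{-1})=-w_0\mu_+(\gamma)$.
\item $p(\mathfrak a^\ast+i[0,t](-w_0\lambda))\cap\R=p(\mathfrak a^\ast+i[0,t]\lambda)\cap\R$ for $p\in\mathcal P_\R$, by $W$-invariance and complex conjugation.
\end{itemize}
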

	\begin{proof}
		The proof is a standard calculation.
		For $f,g \in L^2(G/K)$ we have
		\begin{align*}
			\langle& (D-z)^{-1} f, g\rangle _{L^2(G/K)}=
			|W|^{-1} \int_{i \mathfrak{a}^\ast \times K/M}^{}  \mathcal F (D-z)^{-1}f(\lambda,kM) \overline {\mathcal Fg(\lambda,kM)} |\mathbf c(\lambda)|^{-2} d\lambda d(kM)
			\\
			       &=
			       |W|^{-1} \int_{i \mathfrak{a}^\ast\times K/M}^{} (\chi_\lambda(D)-z)^{-1} \mathcal Ff(\lambda,kM) \overline {\mathcal Fg(\lambda,kM)} |\mathbf c(\lambda)|^{-2} d\lambda d(kM)
			       \\
			       &=
			       |W|^{-1} \int_{i \mathfrak{a}^\ast\times K/M}^{} (\chi_\lambda(D)-z)^{-1}
			       \mathcal Ff(\lambda,kM)
			       \overline {\int_{G/K}^{} {g(hK) e^{(\lambda-\rho) H(h^{-1}k)}} \: d{(hK)} } |\mathbf c(\lambda)|^{-2} d\lambda d(kM)
			       \\
			       &=
			       |W|^{-1} \int_{i \mathfrak{a}^\ast\times K/M}^{} (\chi_\lambda(D)-z)^{-1}
			       \mathcal Ff(\lambda,kM)
			       \int_{G/K}^{} {\overline{g(hK)} e^{(\overline \lambda-\rho) H(h^{-1}k)}} \: d{(hK)}  |\mathbf c(\lambda)|^{-2} d\lambda d(kM)
			       \\
			       &=
			       \int_{G/K}|W|^{-1} \int_{i \mathfrak{a}^\ast\times K/M}^{} (\chi_\lambda(D)-z)^{-1}
			       \mathcal Ff(\lambda,kM)
			       { e^{(- \lambda-\rho) H(h^{-1}k)}}   |\mathbf c(\lambda)|^{-2} d\lambda d(kM)\overline{g(hK)}\: d{(hK)}.
		\end{align*}

		Hence,
		\begin{align*}
			(D&-z)^{-1}f(hK)= 
			|W|^{-1} \int_{i \mathfrak{a}^\ast\times K/M}^{} (\chi_\lambda(D)-z)^{-1}
			\mathcal Ff(\lambda,kM)
			{ e^{(-\lambda-\rho) H(h^{-1}k)}}   |\mathbf c(\lambda)|^{-2} d\lambda d(kM)
			\\
			  &=
			  |W|^{-1} \int_{i \mathfrak{a}^\ast\times K/M}^{} (\chi_\lambda(D)-z)^{-1}
			  \int_{G/K}^{} {f(gK) e ^{(\lambda-\rho)H(g ^{-1}k)}} \: d{(gK)} 
			  { e^{(- \lambda-\rho) H(h^{-1}k)}}   |\mathbf c(\lambda)|^{-2} d\lambda d(kM)
			  \\
			  &=
			  \int_{G/K}^{}f(gK) 
			  |W|^{-1} \int_{i \mathfrak{a}^\ast\times K/M}^{} (\chi_\lambda(D)-z)^{-1}
			  {e ^{(\lambda-\rho)H(g ^{-1}k)}} 
			  { e^{(- \lambda-\rho) H(h^{-1}k)}}   |\mathbf c(\lambda)|^{-2} d\lambda d(kM)\: d{(gK)}.
		\end{align*}
		By \cite[Lemma~IV.4.4]{gaga}
		\begin{align*}
			\int_{K/M}^{}  {e ^{(\lambda-\rho)H(g ^{-1}k)}} 
			{ e^{(- \lambda-\rho) H(h^{-1}k)}} d(kM)
			 &=
			 \phi_\lambda(h ^{-1}g) = \phi _{-\lambda}(g ^{-1}h).
		\end{align*}
		It follows that
		\begin{align*}
			(D-z)^{-1}f(hK)= 
			\int_{G/K}^{}f(gK) 
			|W|^{-1} \int_{i \mathfrak{a}^\ast}^{} (\chi_\lambda(D)-z)^{-1}
			\phi_{-\lambda}(g ^{-1}h)
			|\mathbf c(\lambda)|^{-2} d\lambda d(kM)\: d{(gK)}.
		\end{align*}
		This proves the first equation.

		We now use Theorem~\ref{thm:sphericalexpansion}.
		We note that the assumptions are satisfied for $-\lambda\in i \mathfrak{a}^\ast$
		except if there is $\alpha\in \Pi$ such that $\langle \alpha,\lambda\rangle =0$
		which is a set of measure $0$.
		Hence, we can write
		\begin{align*}
			k(D,z,gK) 
			 &=
			 |W|^{-1} \int_{i \mathfrak{a}^\ast}(\chi_\lambda(D)-z)^{-1}
			 \sum_{{w\in W}} {\mathbf{c}(-w\lambda) }\Phi_{-w\lambda}(g) |\mathbf c(\lambda)|^{-2} d\lambda
			 \\
			 &=
			 |W|^{-1} \sum_{{w\in W}}\int_{i \mathfrak{a}^\ast}(\chi_{w\lambda}(D)-z)^{-1}
			 {\mathbf{c}(-w\lambda) }\Phi_{-w\lambda}(g) |\mathbf c(w\lambda)|^{-2} d\lambda
		\end{align*}
		As $d\lambda$ is $W$-invariant, we infer
		\begin{align*}
			k(D,z,gK) 
			 &=
			 \int_{i \mathfrak{a}^\ast}(\chi_{\lambda}(D)-z)^{-1}
			 {\mathbf{c}(-\lambda) }\Phi_{-\lambda}(g) |\mathbf c(\lambda)|^{-2} d\lambda
			 \\
			 &=
			 \int_{i \mathfrak{a}^\ast}(\chi_{\lambda}(D)-z)^{-1}
			 \Phi_{-\lambda}(g) \mathbf c(\lambda) ^{-1} d\lambda
		\end{align*}
		completing the proof.
	\end{proof}

	We now want to estimate the kernel of Lemma~\ref{la:kernelformulaGK}.
	As usual we will shift the contour of integration to obtain exponential decay.
	The integral is not necessarily absolutely convergent but we will restrict ourselves
	to the case where it is.
	To that end, let us recall the notion of an elliptic polynomial.
	\begin{definition}
		A polynomial $p\in \operatorname{Poly} (\mathfrak{a}_\C^\ast)$
		is called \emph{elliptic}
		if its highest degree homogeneous part $p_m$ satisfies $p_m(\lambda)\neq 0$
		for all $\lambda\in \mathfrak{a}^\ast$.
	\end{definition}
	If now $p$ is elliptic of order $m$,
	then
	\[
		|(p(\lambda)-z) ^{-1} \mathbf{c}(\lambda) ^{-1}| \leq C |\lambda|^{-m + \frac 12 \dim N}
	\]
	by Lemma~\ref{la:estc}
	and thus is integrable if $m>\frac 12 \dim N + \dim A -1$.
	This leads to the following proposition.

	\begin{proposition}
		\label{prop:decaykernel}
		Let $D\in \mathbb{D}(G/K)$
		such that $\chi_\lambda(D)$ is elliptic of order $m$
		with $m > \frac 12 \dim N + \dim A -1$.
		Let $\lambda\in \mathfrak{a}_+^\ast$
		and $z\notin \{\chi_\mu(D) \colon \mu\in [0,1]\lambda + i \mathfrak{a}^\ast\}$.
		Then for any $\varepsilon >0$ there is a constant $C$ such that
		\[
			|k(D,z,e^H)| \leq C e^{(-\lambda-\rho)H}
		\]
		locally uniform in $z$
		for any $H$ with $\alpha(H)\geq \varepsilon$ for all $\alpha\in \Pi$.
	\end{proposition}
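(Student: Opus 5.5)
We start from the second expression for the kernel in Lemma~\ref{la:kernelformulaGK},
\[
	k(D,z,e^H)=\int_{i\mathfrak{a}^\ast}(\chi_\mu(D)-z)^{-1}\Phi_{-\mu}(e^H)\,\mathbf c(\mu)^{-1}\,d\mu ,
\]
and shift the contour from $i\mathfrak{a}^\ast$ to $\lambda+i\mathfrak{a}^\ast$. Along the way, $-\mu$ has real part in $-[0,1]\lambda\subseteq-\mathfrak{a}_+^\ast$. Lemma~\ref{la:estPhi} then gives $|\Phi_{-\mu}(e^H)|\le C_\varepsilon e^{(-\Re\mu-\rho)H}$ uniformly for $\alpha(H)\ge\varepsilon$. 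After the shift, this produces the factor $e^{(-\lambda-\rho)H}$ that we want.

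\textbf{Step 1: holomorphy in the strip.} We check that the integrand is holomorphic in $\mu$ on an open neighbourhood of the strip $S=[0,1]\lambda+i\mathfrak{a}^\ast$.
\begin{enumerate}
	\item $(\chi_\mu(D)-z)^{-1}$ is holomorphic near $S$ because $z$ is not in the closed set $\chi_S(D)$.
	\item $\mathbf c(\mu)^{-1}$ is holomorphic for $\Re\mu\in\mathfrak{a}_+^\ast$. By the Gindikin--Karpelevich product formula, $\mathbf c$ has no zeros there. Lemma~\ref{la:estc} gives at least the bound.
	\item $\mu\mapsto\Phi_{-\mu}(e^H)$ is holomorphic for $\Re(-\mu)\in-\mathfrak{a}_+^\ast$. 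Each $\Gamma_\mu$ is rational with poles only on the $\sigma_\nu$, and these avoid this region. The series converges locally uniformly by the bound $|\Gamma_\nu(\lambda)|\le Ce^{\nu(H_0)}$ used in the proof of Lemma~\ref{la:estPhi}.
\end{enumerate}

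\textbf{Step 2: decay and the contour shift.} We need decay of the integrand at infinity, uniformly on $S$. Ellipticity of $p=\chi(D)$ of order $m$ gives $|p(\mu)|\ge c|\Im\mu|^m$ for $|\Im\mu|$ large, uniformly for $\Re\mu$ in the compact set $[0,1]\lambda$. This holds because the top homogeneous part $p_m$ is nonvanishing on $i\mathfrak{a}^\ast$, since $p_m(i\xi)=i^mp_m(\xi)$ and $p_m(\xi)\ne0$ for real $\xi\neq0$. Hence $|(\chi_\mu(D)-z)^{-1}|\le C|\mu|^{-m}$ for large $|\mu|$ in $S$, locally uniformly in $z$. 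Combined with Lemma~\ref{la:estc} and Lemma~\ref{la:estPhi}, the integrand on $S$ is bounded by
\[
	C|\mu|^{-m+\frac12\dim N}e^{(-\Re\mu-\rho)H},
\]
which is integrable over each slice $t\lambda+i\mathfrak{a}^\ast$ because $m>\frac12\dim N+\dim A-1$.

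We then apply Cauchy's theorem in the one complex variable $s\mapsto s\lambda$. Concretely, we integrate over the boundary of the region $\{t\lambda+i\xi : t\in[0,1],\ |\xi|\le R\}$, splitting $\xi$ into its component along $\lambda$ and the orthogonal part and applying Fubini. The side contributions vanish as $R\to\infty$ by the uniform polynomial decay. This identifies the integral over $i\mathfrak{a}^\ast$ with the integral over $\lambda+i\mathfrak{a}^\ast$. Bounding the latter as above gives $|k(D,z,e^H)|\le Ce^{(-\lambda-\rho)H}$, with $C$ depending on $\varepsilon$ and locally uniform in $z$.

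\textbf{Main obstacle.} The delicate point is Step 1: justifying holomorphy and nonvanishing on the shifted region, especially at the boundary walls where $\Re\mu$ lies on a wall of $\mathfrak{a}_+^\ast$ (including $\Re\mu=0$).
\begin{enumerate}
	\item For $\mathbf c^{-1}$, we handle this with the explicit product formula, whose factors $1/\Gamma$ are entire.
	\item For $\Phi_{-\mu}$, we rely on the uniform bound of \cite[Lemma~IV.5.6]{gaga} on the closed region $-\mathfrak{a}_+^\ast+i\mathfrak{a}^\ast$. If needed, we shift the contour to $\delta\lambda+i\mathfrak{a}^\ast$ for small $\delta>0$ and let $\delta\to0$, using dominated convergence.
\end{enumerate}
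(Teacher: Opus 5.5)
Your route is the paper's route. You shift the contour in the $\lambda$-direction (one-variable Cauchy in the coefficient of $\lambda$, Fubini in the complementary directions), then bound the integrand on $\lambda+i\mathfrak{a}^\ast$ by ellipticity, Lemma~\ref{la:estc} and Lemma~\ref{la:estPhi}. Your Step~1 is more careful than the paper, which just asserts holomorphy on $(-\epsilon,1+\epsilon)+i\mathbb{R}$. Working on the closed strip $\{\Re\mu\in[0,1]\lambda\}$, where the hyperplanes $\sigma_\nu$ are avoided and the bound on $\Gamma_\nu$ is uniform, is the right way to justify the shift.

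There is, however, a genuine gap in the integrability step, and the paper's proof contains the same error. After the shift you bound the integrand by $C(1+|\Im\mu|)^{\frac12\dim N-m}$ and integrate over $\lambda+i\mathfrak{a}^\ast\cong\mathbb{R}^{\dim A}$. In polar coordinates this converges iff $\frac12\dim N-m+\dim A-1<-1$, i.e.\ iff $m>\frac12\dim N+\dim A$. The hypothesis $m>\frac12\dim N+\dim A-1$ only makes this exponent $<0$; the paper's sentence ``since $\frac12\dim N-m+\dim A-1<0$'' should read ``$<-1$''.

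The individual bounds cannot be sharpened to rescue this. In generic directions $|\mathbf c(\mu)|^{-1}\asymp|\Im\mu|^{\frac12\dim N}$, and $|\Phi_{-\mu}(e^H)|$ does not decay in $\Im\mu$. A concrete failure is $G=\mathrm{SL}_2(\mathbb{C})$ with $D$ the Laplacian. There $m=2>1=\frac12\dim N+\dim A-1$, but $\mathbf c(\mu)^{-1}$ is a constant multiple of $\langle\mu,\alpha\rangle$ and $\Phi_{-\mu}(e^H)=e^{(-\mu-\rho)H}(1-e^{-2\alpha(H)})^{-1}$. So on every vertical line the integrand has modulus $\asymp|\Im\mu|^{-1}$. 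Neither the original integral nor the shifted one converges absolutely, and your final bound is infinite. The estimate itself does hold in that example, since the kernel is explicit; it is the proof by absolute integration that breaks down.

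As written, then, your argument proves the proposition only under $m>\frac12\dim N+\dim A$. That is harmless downstream: Theorem~\ref{thm:spectrumsinglesa} only needs polynomials of sufficiently large degree, which are obtained by taking powers as in Proposition~\ref{prop:jointspecinintersectionofpolys}. But you should state the stronger degree assumption explicitly rather than claim integrability from the given one.
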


	\begin{proof}
		For fixed $\lambda_R\in \mathfrak{a}^\ast$,
		\[
			\chi_{\lambda_R + i \eta}(D) = \chi_{i\eta} + \text{terms of degree $<m$ in $\eta$}
		\]
		is still elliptic in $\eta$.
		Hence, for any compact set $C\subseteq \mathfrak{a}^\ast$,
		\[
			|\chi_\eta (D)| \geq c\|\Im \eta\|^m
		\]
		for $\Re \eta\in C$ and $\|\Im \eta\| \geq R$.

		We complete $\lambda \eqqcolon\lambda_1$ to a basis $\lambda_1,\ldots,\lambda_r$ of $\mathfrak{a}^\ast$.
		Then,
		\begin{align*}
			k(D,z) &= \int_{i \mathfrak{a}^\ast}^{}  (\chi_{\lambda}(D)-z)^{-1}
			\Phi_{-\lambda} \mathbf c(\lambda) ^{-1}\: d{\lambda} 
			\\
			       &=
			       \int_{i \R^n}^{}  (\chi_{\sum x_k \lambda_k}(D)-z)^{-1}
			       \Phi_{-\sum x_k \lambda_k} \mathbf c\left(\sum x_k \lambda_k\right) ^{-1}\: d{x} 
		\end{align*}
		Considering the integrand as a function $f$ of $x_1$,
		we see that $f$ is holomorphic in $(-\epsilon,1+\epsilon) + i\R$
		and $|f(x_1)|\leq C |\Im x_1|^{\frac 12 \dim N -m }$
		for $\Re x_1 \in [0,1]$ and $|\Im x_1| \gg 0$.
		Since $m > \frac 12 \dim N$ we can thus shift the contour of integration to $1 + i\R$
		and get
		\[
			k(D,z) = \int_{\lambda_1 + i \mathfrak{a}^\ast}^{}  (\chi_{\lambda}(D)-z)^{-1}
			\Phi_{-\lambda} \mathbf c(\lambda) ^{-1}\: d{\lambda} .
		\]
		The integrand can now be estimated
		\[
			| (\chi_{\lambda}(D)-z)^{-1}
			\mathbf c(\lambda) ^{-1}|
			\leq 
			C (1 + |\Im \lambda|)^{\frac 12 \dim N -m}
		\]
		for $\lambda \in  \lambda_1 + i \mathfrak{a}^\ast$ as before.
		Thus we have,
		since $\frac 12 \dim N -m + \dim A -1 < 0$,
		\begin{align*}
			|k(D,z,e^H)|\leq C_\varepsilon \int_{\lambda_1+ i \mathfrak{a}\ast}^{} (1+\|\Im \lambda\|)^{\frac 12 \dim N-m}  \: d{\lambda} 
			\cdot
			e^{(-\lambda_1-\rho)H}
			\leq C_\varepsilon 
			e^{(-\lambda_1-\rho)H}
		\end{align*}
		using Lemma~\ref{la:estPhi}.
		Note that the constant is uniform in a small neighborhood of $z$.
		Indeed, if $z\notin \{\chi_\lambda(D) \colon \lambda\in [0,1]\lambda + i \mathfrak{a}^\ast\}$
		then the same holds for a small perturbation of $z$.
		The constant $C$ just comes from the estimate on $|\chi_\lambda(D)-z|$ which is uniform in a small
		neighborhood of $z$.
	\end{proof}

	\section{Separating tubes by invariant polynomials}%
	\label{sec:Separating tubes by invariant polynomials}

	In this section we study how to separate tubes $\mathfrak{a}^\ast + iC$ by $W$-invariant elliptic polynomials.
	We formulate the results in the general language of root systems.
	Let $\Sigma$ be a root system in a Euclidean vector space $V$
	and $W$ the corresponding Weyl group.
	For $x\in V$ we denote by $C_x$ the convex hull of the $W$-orbit of $x$,
	\[
		C_x = \operatorname{conv} (Wx),
	\]
	and by $C_x^\pm$ its symmetrization
	\[
		C_x^\pm = \operatorname{conv} (Wx \cup W(-x)).
	\]
	More generally, we denote by $C^\pm$ the symmetrization of a compact set $C\subseteq V$,
	\[
		C^\pm = \operatorname{conv} (WC \cup W(-C)),
	\]
	and we note that $(C_x)^\pm = C_x^\pm$.
	Clearly, $C_x^\pm = C_x$ if and only if $-x \in Wx$.
	Let
	\[
		\mathcal{P}\coloneqq\{p\in \operatorname{Poly} (V_\C)^W \text{ elliptic}\}
	\]
	the set of elliptic $W$-invariant polynomials
	and
	\[
		\mathcal{P}_\R \coloneqq \{p\in \mathcal{P} \colon p(V)\subseteq\R\}
		=\{p\in \mathcal{P} \colon \overline{p(v)}=p(\overline{v}) \:\forall v\in V_\C\}
	\]
	the set of elliptic $W$-invariant polynomials with real coefficients.
	We will encounter the set $\mathcal{P}_\R$ for the self-adjoint spectral problem.
	The goal of this section is to identify the sets
	\[
		H(C) \coloneqq \{v\in V_\C \colon p(v) \in p(V + iC) \: \forall p\in \mathcal{P}\}
	\]
	and
	\[
		H_\R(C) \coloneqq \{v\in V_\C \colon p(v) \in p(V + iC)\cap \R \: \forall p\in \mathcal{P}_\R\}.
	\]
	We abbreviate
	\[
		H(x)=H([0,1]x) \quad \text{and} \quad H_\R(x)=H_\R([0,1]x)
	\]
	for $x\in V$.
	We start with the following well-known lemma.
	\begin{lemma}
		[{\cite[Lemma~III.3.11]{gaga}}]
		\label{la:Pseparatespoints}
		$\mathcal{P}$ separates the points of $V_\C/W$,
		i.e.~if $p(v)=p(w)$ for all $p\in \mathcal{P}$,
		then $v\in Ww \subseteq V_\C$.
	\end{lemma}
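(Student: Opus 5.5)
We need to prove that the elliptic $W$-invariant polynomials separate the points of $V_\C/W$. The plan is to reduce to the classical fact that \emph{all} $W$-invariant polynomials separate $W$-orbits in $V_\C$, and then use ellipticity only as a harmless perturbation.

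\emph{Step 1 (invariants separate orbits).} Let $v,w\in V_\C$ with $v\notin Ww$. The orbits $Wv$ and $Ww$ are finite and disjoint, so we can interpolate: choose a polynomial $q$ on $V_\C$ with $q\equiv 0$ on $Ww$ and $q\equiv 1$ on $Wv$. Averaging,
\[
	\tilde q = \frac{1}{|W|}\sum_{s\in W} q\circ s,
\]
gives a $W$-invariant polynomial with $\tilde q(w)=0$ and $\tilde q(v)=1$. In particular some $p\in \operatorname{Poly}(V_\C)^W$ has $p(v)\neq p(w)$.

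\emph{Step 2 (make it elliptic).} Fix the $W$-invariant elliptic quadratic $Q(x)=\langle x,x\rangle$, extended complex-bilinearly. Since $Q(x)>0$ for $x\in V\setminus\{0\}$, the power $Q^N$ is elliptic of degree $2N$. Choose $N$ with $2N>\deg \tilde q$ and set, for $t\in\C$,
\[
	p_t = t\,\tilde q + Q^N .
\]
The top homogeneous part of $p_t$ is $Q^N$, which does not vanish on $V\setminus\{0\}$, so every $p_t$ lies in $\mathcal{P}$. Suppose $p_t(v)=p_t(w)$ for all $t$. Taking $t=0$ gives $Q^N(v)=Q^N(w)$, and then taking $t=1$ gives $\tilde q(v)=\tilde q(w)$, contradicting Step 1. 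Hence some $p\in\mathcal{P}$ separates $v$ and $w$. Equivalently, by contraposition, if $p(v)=p(w)$ for all $p\in\mathcal{P}$ then $v\in Ww$.

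\emph{Main obstacle.} The only nontrivial input is the interpolation-and-averaging argument in Step 1, and it is elementary because the orbits are finite. The one point needing care is that ellipticity is defined only through the top-degree part, which is why we take the degree of $Q^N$ strictly larger than $\deg\tilde q$. If we also wanted real coefficients, we could apply the same construction to the real and imaginary parts of $\tilde q$.
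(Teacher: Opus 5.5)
Your proof is correct and follows the paper's route. Both arguments first separate $Wv$ and $Ww$ by some $W$-invariant polynomial, then add an elliptic $W$-invariant polynomial of strictly higher degree so that the sum lies in $\mathcal{P}$. The differences are minor: the paper cites Helgason's Lemma~III.3.11 for the first step, which you prove directly by interpolation and averaging over $W$, and the paper adds a \emph{small} elliptic term with the degree condition left implicit, whereas you state the degree condition and use the dichotomy $t\in\{0,1\}$ instead of smallness.
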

	\begin{proof}
		By \cite[Lemma~III.3.11]{gaga} we can find $p\in \operatorname{Poly} (V_\C)^W$
		that separates $Wv$ and $Ww$.
		As $Wv$ and $Ww$ are finite, we can add a small elliptic polynomial to $p$
		to get the same property for some $\tilde p\in \mathcal{P}$.
	\end{proof}

	\begin{proposition}
		\label{prop:imagecomplexpolynomial}
		For $x\in V$ we have
		\[
			p(V+i[0,1]x)=p(V+iC_x)
			\quad \forall p\in \mathcal{P}.
		\]
		In particular, $H(x) = H(C_x)$
		and $V+iC_x \subseteq H(x)$.
	\end{proposition}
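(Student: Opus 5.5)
The inclusion ``$\subseteq$'' is immediate. Since $W$ acts linearly and we may take $\Sigma$ to span $V$ (the semisimple situation of the paper), the barycenter of $Wx$ is $W$-fixed, hence $0$, so $0\in C_x$. Convexity then gives $[0,1]x\subseteq C_x$, and therefore $p(V+i[0,1]x)\subseteq p(V+iC_x)$. Ellipticity plays no role; only $W$-invariance of $p$ is used. The consequences $H(x)=H(C_x)$ and $V+iC_x\subseteq H(x)$ follow directly from the equality of images by the definition of $H$. So the work is the reverse inclusion $p(V+iC_x)\subseteq p(V+i[0,1]x)$.

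The basic tool I would prove first is a rank-one \emph{folding lemma}. Let $y\in V$, $\alpha\in\Sigma$, and let $m=y-\tfrac{\langle y,\alpha\rangle}{\langle\alpha,\alpha\rangle}\alpha$ be the orthogonal projection of $y$ onto $\alpha^\perp$. The claim is
\[
p(V+i[s_\alpha y,y])=p(V+i[m,y]).
\]
To prove it, write a point of $V+i[s_\alpha y,y]$ as $u'+im+z\alpha$ with $u'\perp\alpha$ and $z\in\C$, where $|\Im z|\le c:=\langle y,\alpha\rangle/\langle\alpha,\alpha\rangle$. Since $s_\alpha$ fixes $u'+im$ and sends $\alpha\mapsto-\alpha$, the function $q(z)=p(u'+im+z\alpha)$ is even, so $q(z)=q(-z)$. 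Every $z$ in the strip $|\Im z|\le c$ therefore has the same $q$-value as a point of the half-strip $0\le\Im z\le c$, and the latter corresponds exactly to imaginary parts in $[m,y]$. By $W$-invariance we also have $p(V+iwA)=p(V+iA)$ for every $w\in W$ and every set $A$.

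The main step is a descent showing that every $y\in C_x$ satisfies $p(V+iy)\subseteq p(V+i[0,1]x)$. Let $S$ be the set of such $y$. Then $S$ is $W$-invariant and contains $[0,1]x$, hence $W[0,1]x$. By the folding lemma $S$ has the following closure property: if the whole segment $[m,y]$ lies in $S$, then so does $[s_\alpha y,y]$.

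I would then argue by induction over the dominance description of $C_x$. A dominant $y'$ lies in $C_x$ iff $x_+-y'\in\R_{\ge0}\Pi^\vee$-cone, where $x_+$ is the dominant representative. One reaches such $y'$ from the segment $[0,x_+]$ by finitely many moves that replace a point by a point on a segment $[s_\alpha y,y]$; these are Kostant-type ``root string'' moves. The cleanest version I would try is to show that each $z\in C_x$ lies in the convex hull of the $W$-orbits of points of $[0,1]x_+$ through a chain of such rank-one segments, by induction on the number of simple roots involved.

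I expect this descent to be the main obstacle. The folding lemma only reduces a segment to its half ending at the wall, not to a segment through $0$. The combinatorics therefore has to be arranged so that the half-segments $[m,y]$ produced at each stage are already known to lie in $S$. If a direct induction fails, my fallback is to replace segments by the higher-dimensional analogue of the folding lemma for the parabolic subgroup $W_F$. Here one uses the complex affine subspace $u'+im+\operatorname{span}_\C(F)$ together with Chevalley's theorem for $W_F$, which reduces $W_F$-invariant polynomials to polynomials in the $W_F$-invariants. One then inducts on $|F|$, so that the folding is done relative to the fixed space of $W_F$ rather than a single wall.
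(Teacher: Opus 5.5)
Your inclusion $p(V+i[0,1]x)\subseteq p(V+iC_x)$ is fine, and you are right that it uses $0\in C_x$. The gap is in the reverse inclusion, and it is not a matter of arranging the combinatorics. Your folding lemma contains no information beyond $W$-invariance. Since $s_\alpha m=m$, the segment $[s_\alpha y,y]$ equals $[m,y]\cup s_\alpha[m,y]$. So $p(V+i[s_\alpha y,y])=p(V+i[m,y])$ is just the identity $p(V+is_\alpha A)=p(V+iA)$ for $A=[m,y]$. Likewise, your closure rule for $S$ already follows from the $W$-invariance of $S$.

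Consequently, the smallest set that contains $[0,1]x$, is $W$-invariant, and is closed under your rule is the star $\bigcup_{w\in W}[0,wx]$. This is a finite union of segments, so whenever $C_x$ is not itself such a union it is strictly smaller than $C_x$. For example, for $A_1\times A_1$ and $x=(1,1)$ the star is the pair of diagonals of the square $C_x=[-1,1]^2$. No sequence of root-string moves gets past this. The parabolic fallback does not help either. Restricting to slices $u'+im+\operatorname{span}_\C F$ only reduces the problem for $W$ to the same problem for $W_F$, again using nothing but invariance, and at $F=\Pi$ it is circular.

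The missing ingredient is analytic. For a point such as $(1,0)\in C_x$ in the example, you must actually solve $p(v'+it(1,1))=p(v+i(1,0))$ with some $t\in[0,1]$ that no symmetry provides. Your argument never uses holomorphy or ellipticity in this direction, which is a sign it cannot work as designed.

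The paper argues by contraposition. Suppose $z\notin p(V+i[0,1]x)$. Ellipticity gives $|p(v+iy)|\to\infty$ as $|v|\to\infty$, uniformly for $y$ in compact sets. Hence there is a connected open neighborhood $U$ of $[0,1]x$ with $z\notin p(V+iU)$. Then $(p-z)^{-1}$ is holomorphic on the tube over $\bigcup_{w\in W}wU$, which is connected because each $wU$ contains $0$. Bochner's tube theorem extends it to a holomorphic $f$ on the tube over $\operatorname{conv}(WU)\supseteq C_x$. By the identity theorem, $f\cdot(p-z)\equiv 1$ there, so $z\notin p(V+iC_x)$.
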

	\begin{proof}
		Suppose $z \notin p(V + i[0,1]x)$.
		Since $p$ is elliptic,
		there is a connected open neighborhood $U$ of $[0,1]x$
		such that $z \notin p(V+i U)$.
		Therefore,
		$f\colon V+ iU \to \C, f(v)= (p(v)-z) ^{-1}$
		is an analytic function on $V+ iU$.
		The $W$-invariance of $p$ implies
		that $f$ is an analytic function on $\bigcup_{w\in W}  V + i wU$.
		By Bochner's tube theorem (see \cite[Thm.~2.5.10]{HormanderComplex}),
		$f$ extends to an analytic function to $V +i \operatorname{conv} (WU)$.
		Since $f$ is analytic, $f(\eta)$ is still given by $(p(\eta)-z)^{-1}$
		so that $z \notin p( V +i \operatorname{conv} (WU))$.
	\end{proof}

	For real polynomials we get the following analog including the symmetrization.

	\begin{proposition}
		\label{prop:lowerboundhullR}
		For $x\in V$ we have
		\begin{equation}
			\label{eq:imagerealpolynomial}
			p(V+i[0,1]x) \cap \R=p(V+iC_x^\pm)\cap \R
			\quad \forall p\in \mathcal{P}_\R.
		\end{equation}
		In particular, $H_\R(x) = H_\R(C_x^\pm)$.
	\end{proposition}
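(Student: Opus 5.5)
The plan is to mimic the proof of Proposition~\ref{prop:imagecomplexpolynomial}, adding the extra symmetry that comes from the reality of the coefficients. The inclusion ``$\subseteq$'' in \eqref{eq:imagerealpolynomial} is trivial, since $[0,1]x\subseteq C_x^\pm$. For ``$\supseteq$'' I argue by contraposition: fix $p\in\mathcal{P}_\R$ and a real number $z\notin p(V+i[0,1]x)$, and show $z\notin p(V+iC_x^\pm)$.

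\emph{Step 1 (symmetry under $y\mapsto -y$).} Since $p\in\mathcal{P}_\R$ we have $p(\overline{v})=\overline{p(v)}$, and complex conjugation maps $V+iy$ to $V-iy$. Because $z=\overline z$, the condition $z\notin p(V+i[0,1]x)$ implies $z\notin p(V-i[0,1]x)$. Combining this with $W$-invariance, $z\notin p(V+iy)$ for every $y\in W[0,1]x\cup W(-[0,1]x)$.

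\emph{Step 2 (open neighbourhood).} I first check that for compact $K\subseteq V$ the set $p(V+iK)$ is closed. Ellipticity gives $|p(v+iy)|\geq c|v|^m - C$ uniformly for $y\in K$, so any sequence in $V+iK$ whose image converges stays bounded, and compactness then gives a limit point in $V+iK$. This closedness gives a uniform positive distance from $z$ to $p(V+i[0,1]x)$. The same ellipticity bound, together with uniform continuity of $p$ on bounded sets, then produces a connected open neighbourhood $U$ of $[0,1]x$ with $z\notin p(V+iU)$. By Step~1, also $z\notin p(V+iwU)$ and $z\notin p(V-iwU)$ for all $w\in W$.

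\emph{Step 3 (Bochner).} Let $\Omega=\bigcup_{w\in W}(wU\cup -wU)$. This set is open and connected, since every piece is connected and contains the point $0\in[0,1]x$. The function $f(v)=(p(v)-z)^{-1}$ is holomorphic on the tube $V+i\Omega$. By Bochner's tube theorem \cite[Thm.~2.5.10]{HormanderComplex}, $f$ extends holomorphically to $V+i\operatorname{conv}(\Omega)$. Now $(p-z)f\equiv 1$ holds on $V+i\Omega$, and $V+i\operatorname{conv}(\Omega)$ is connected, so by the identity theorem the same relation holds on the whole tube. Hence $p-z$ has no zeros on $V+i\operatorname{conv}(\Omega)\supseteq V+iC_x^\pm$, which gives $z\notin p(V+iC_x^\pm)$.

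The consequence $H_\R(x)=H_\R(C_x^\pm)$ follows directly from the definition of $H_\R$ once \eqref{eq:imagerealpolynomial} holds for every $p\in\mathcal{P}_\R$. The only step requiring some care is Step~2, namely obtaining an open neighbourhood $U$ that is uniform over the non-compact directions in $V$; ellipticity is exactly what controls those directions. The rest of the argument is identical to the proof of Proposition~\ref{prop:imagecomplexpolynomial}, with the additional reflection $y\mapsto -y$ supplied by the reality of both $p$ and $z$.
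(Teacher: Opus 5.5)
Your proof is correct and follows essentially the same route as the paper: use $p(\overline{v})=\overline{p(v)}$ together with $z\in\R$ to pass from $[0,1]x$ to $[-1,1]x$, then rerun the Bochner tube argument of Proposition~\ref{prop:imagecomplexpolynomial} on the $W$-symmetrized neighbourhood. Your Step~2 (the open neighbourhood $U$ obtained from ellipticity), the check that $\Omega$ is connected because every piece contains $0$, and the identity-theorem step supply details that the paper leaves implicit.
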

	\begin{proof}
		Suppose $y \in \R \setminus p(V +  i [0,1]x)$.
		Then $y \in \R \setminus p(V + i [-1,1]x)$.
		Indeed, $y = p(v-iw) \in \R$ for $w\in [0,1]x$ and $v\in V$
		would imply
		\[
			y = \overline{y} = \overline{p(v-iw)}
			= p(v+iw)\in p(V+[0,1]x)
		\]
		as $p\in \mathcal{P}_\R$
		which is a contradiction.
		Thus,
		\[
			p(V+i[0,1]x) \cap \R=p(V+i [-1,1])\cap \R.
		\]
		Now the same argument as for Proposition~\ref{prop:imagecomplexpolynomial}
		proves \eqref{eq:imagerealpolynomial}.
	\end{proof}

	In order to get a lower bound on $H_\R(x)$
	we
	need to analyze which elements of $V_\C$ have real values for $p \in \mathcal{P}_\R$.
	\begin{lemma}
		\label{la:hermitegeneral}
		Let
		\[
			V_{\operatorname{Her} } \coloneqq \{v\in V_\C \colon \overline{v} \in Wv\}.
		\]
		Then
		\[
			\{v\in V_\C \colon p(v)\in \R \: \forall p\in \mathcal{P}_\R\} = V_{\operatorname{Her}} .
		\]
	\end{lemma}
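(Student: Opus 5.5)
The plan is to prove the two inclusions separately. One direction is formal; the other reduces to Lemma~\ref{la:Pseparatespoints}.

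For $V_{\operatorname{Her}} \subseteq \{v\colon p(v)\in\R\ \forall p\in\mathcal P_\R\}$, let $v\in V_\C$ with $\overline v = wv$ for some $w\in W$, and let $p\in\mathcal P_\R$. Because $p$ has real coefficients and is $W$-invariant,
\[
\overline{p(v)} = p(\overline v) = p(wv) = p(v).
\]
Hence $p(v)\in\R$.

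For the converse, suppose $p(v)\in\R$ for every $p\in\mathcal P_\R$. I want to show $p(\overline v)=p(v)$ for every $p\in\mathcal P$; Lemma~\ref{la:Pseparatespoints} then gives $\overline v\in Wv$, i.e.\ $v\in V_{\operatorname{Her}}$. Take $p\in\mathcal P$ and define its conjugate polynomial $\bar p(u)\coloneqq \overline{p(\overline u)}$, which is again $W$-invariant. Split $p=q_1+iq_2$ with
\[
q_1=\tfrac12(p+\bar p),\qquad q_2=\tfrac1{2i}(p-\bar p).
\]
Both $q_j$ are $W$-invariant polynomials with real coefficients. If $q_1,q_2$ were both elliptic, the hypothesis would give $q_j(v)\in\R$, hence $q_j(\overline v)=\overline{q_j(v)}=q_j(v)$. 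Then $p(\overline v)=q_1(v)+iq_2(v)=p(v)$, as required.

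The main obstacle is that $q_1,q_2$ need not be elliptic. I would remove ellipticity from the picture as follows. Let $e(u)=\langle u,u\rangle$, extended complex-bilinearly. It is $W$-invariant, has real coefficients, and its top part is positive on $V\setminus\{0\}$. For $N$ large and any real $W$-invariant polynomial $q$, the polynomials $e^N$ and $q+e^N$ both lie in $\mathcal P_\R$, provided $2N>\deg q$, since then the top homogeneous part is that of $e^N$. The hypothesis applies to both, so
\[
q(v) = (q+e^N)(v) - e^N(v)\in\R .
\]
Thus $q(v)\in\R$ for every real $W$-invariant polynomial $q$, elliptic or not, and in particular for $q_1,q_2$.

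In fact this also makes the passage through $\mathcal P$ unnecessary. Conjugation-equivariance holds for every $W$-invariant polynomial: $p(\overline v)=q_1(\overline v)+iq_2(\overline v)=q_1(v)+iq_2(v)=p(v)$ for all $p\in\operatorname{Poly}(V_\C)^W$. So the orbits $W\overline v$ and $Wv$ are not separated by any $W$-invariant polynomial. The separation statement from \cite[Lemma~III.3.11]{gaga}, which is the statement underlying Lemma~\ref{la:Pseparatespoints}, then yields $\overline v\in Wv$.
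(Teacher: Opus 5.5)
Your proof is correct and follows essentially the paper's route: make real $W$-invariant polynomials elliptic by adding a power of $\langle v,v\rangle$, deduce $p(\overline{v})=p(v)$ for every $W$-invariant polynomial $p$, and conclude with the orbit-separation lemma. The only difference is that the paper applies the ellipticity trick to a real Chevalley generating set, whereas your splitting $p=q_1+iq_2$ together with $q=(q+e^N)-e^N$ avoids Chevalley's theorem and makes explicit what the paper's phrase ``adding $\langle v,v\rangle$ to these generators'' leaves implicit.
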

	\begin{remark}
		The notation $V_{\operatorname{Her}}$ stems from the fact that,
		for $V= \mathfrak{a}^\ast$ in a semisimple Lie group,
		the corresponding spherical function $\phi_{i\lambda}$
		for $\lambda\in \mathfrak{a}^\ast_{\operatorname{Her} }$
		is Hermitean,
		i.e.~can be used to define an inner product.
		For further reference see \cite{Cow23} and \cite[Ch.~IV.1]{gaga}.
	\end{remark}
	\begin{proof}
		By Chevalley's Theorem \cite[Thm.~3.5]{Hum90} there is a real set of generators
		for $\operatorname{Poly} (V_\C)^W$.
		By adding $p(v)=\langle v,v \rangle$ to these generators
		we get a set of generators of $\operatorname{Poly} (V_\C)^W$
		in $\mathcal{P}_\R$.
		Thus, for $v\in V_\C$, $p(v)\in \R$ for all $p\in \mathcal{P}_\R$
		is equivalent to $p(v)=p(\overline{v})$ for all $p\in \mathcal{P}_\R$
		and therefore also for all $p\in \operatorname{Poly} (V_\C)^W$.
		Lemma~\ref{la:Pseparatespoints} implies $\overline{v} \in Wv$.
	\end{proof}

	\begin{corollary}
		\label{cor:hullhermit}
		For $x\in V$ we have
		\[
			V_{\operatorname{Her}} \cap (V + iC_x^\pm) \subseteq H_\R(x) \subseteq V_{\operatorname{Her} }.
		\]
	\end{corollary}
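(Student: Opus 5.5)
The corollary has two inclusions, and I will prove them separately using the two preceding results.

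For the upper bound $H_\R(x)\subseteq V_{\operatorname{Her}}$, take $v\in H_\R(x)$. By definition of $H_\R(x)=H_\R([0,1]x)$, we have $p(v)\in p(V+i[0,1]x)\cap\R\subseteq \R$ for every $p\in\mathcal{P}_\R$. Lemma~\ref{la:hermitegeneral} characterizes exactly the points at which all $p\in\mathcal P_\R$ are real, so it gives $v\in V_{\operatorname{Her}}$.

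For the lower bound, take $v\in V_{\operatorname{Her}}\cap(V+iC_x^\pm)$ and fix $p\in\mathcal{P}_\R$. Since $v\in V_{\operatorname{Her}}$, Lemma~\ref{la:hermitegeneral} (the easy inclusion) shows $p(v)\in\R$. This inclusion is immediate: $\overline v=wv$ for some $w\in W$, so
\[
\overline{p(v)}=p(\overline v)=p(wv)=p(v)
\]
by the reality of the coefficients of $p$ and its $W$-invariance. Since also $v\in V+iC_x^\pm$, we get $p(v)\in p(V+iC_x^\pm)\cap\R$. Proposition~\ref{prop:lowerboundhullR} identifies this set with $p(V+i[0,1]x)\cap\R$. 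As $p\in\mathcal P_\R$ was arbitrary, $v\in H_\R(x)$.

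Both steps are direct applications of the earlier results, so I do not expect any real obstacle. The one point that needs checking is the direction of Lemma~\ref{la:hermitegeneral} used in the lower bound. Only the trivial inclusion $V_{\operatorname{Her}}\subseteq\{v\colon p(v)\in\R\ \forall p\in\mathcal P_\R\}$ is needed there, and it is the short computation displayed above.
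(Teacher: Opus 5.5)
Your proof is correct and matches what the paper intends: the upper inclusion is Lemma~\ref{la:hermitegeneral} applied to $p(v)\in\R$ for all $p\in\mathcal{P}_\R$. The lower inclusion combines the easy direction of that lemma with the identity $p(V+i[0,1]x)\cap\R = p(V+iC_x^\pm)\cap\R$ from Proposition~\ref{prop:lowerboundhullR}, which is exactly how the paper gets the corollary; the paper gives no written proof, and your argument supplies it.
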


	We now turn to an upper bound for $H_\R(x)$.
	We need the following lemma.

	\begin{lemma}
		\label{la:sepellipse}
		Let $V$ be a Euclidean vector space,
		$C\subset V$ be compact and convex and let
		\[
			C^{\pm}\coloneqq\operatorname{conv}(C\cup(-C)).
		\]
		If $y\notin C^{\pm}$, then there exists a positive definite symmetric operator $A:V\to V$ such that
		\[
			\langle Ac,c\rangle<\langle Ay,y\rangle
			\qquad\text{for all } c\in C.
		\]
	\end{lemma}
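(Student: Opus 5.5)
Since $C^\pm$ is compact, convex and symmetric ($C^\pm=-C^\pm$), the plan is to apply Hahn--Banach and then turn the separating linear functional into a quadratic form. If $C^\pm$ has empty interior we first replace it by a slightly thickened compact convex symmetric set that still misses $y$, e.g.\ $C^\pm+\delta B$ with $B$ the closed unit ball and $\delta<\operatorname{dist}(y,C^\pm)$. The separation theorem then gives a linear functional $\ell=\langle u,\cdot\rangle$, $u\neq 0$, with
\[
	\sup_{c\in C^\pm}\langle u,c\rangle < \langle u,y\rangle .
\]
By symmetry of $C^\pm$ the left-hand side equals $\sup_{c\in C^\pm}|\langle u,c\rangle|=:a$, and $a\ge 0$ since $0\in C^\pm$. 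Put $b:=\langle u,y\rangle>a\ge 0$. In particular $|\langle u,c\rangle|\le a<b=|\langle u,y\rangle|$ for all $c\in C$.

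The natural candidate $u\otimes u$, i.e.\ $Av=\langle u,v\rangle u$, satisfies $\langle Ac,c\rangle=\langle u,c\rangle^2\le a^2<b^2=\langle Ay,y\rangle$, but it is only positive semidefinite. We therefore perturb it to
\[
	A_\varepsilon v \coloneqq \langle u,v\rangle u+\varepsilon v ,\qquad \varepsilon>0,
\]
which is symmetric and positive definite. For $c\in C$ we get $\langle A_\varepsilon c,c\rangle=\langle u,c\rangle^2+\varepsilon\|c\|^2\le a^2+\varepsilon R^2$, where $R=\max_{c\in C}\|c\|<\infty$ by compactness. On the other hand $\langle A_\varepsilon y,y\rangle\ge b^2$. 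Choosing $\varepsilon<(b^2-a^2)/(R^2+1)$ then gives the strict inequality $\langle A_\varepsilon c,c\rangle<\langle A_\varepsilon y,y\rangle$ uniformly in $c\in C$.

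The only real point is that a linear separation for $C$ alone would not give an even (quadratic) separation. This is precisely why the symmetrized set $C^\pm$ is needed: its symmetry turns $\sup\ell$ into $\sup|\ell|$, so that squaring preserves the inequality. Beyond that, the degenerate-interior case is handled by the thickening step, and positivity is obtained by the small $\varepsilon$ perturbation, which costs nothing because both $C$ and the gap $b^2-a^2$ are controlled.
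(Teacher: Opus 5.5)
Your proposal is correct and follows the paper's proof essentially verbatim: you separate $y$ from the symmetric set $C^\pm$ by a functional $\langle u,\cdot\rangle$, use symmetry to get $\max_{c\in C}|\langle u,c\rangle|<\langle u,y\rangle$, and take $Av=\langle u,v\rangle u+\varepsilon v$ with $\varepsilon$ small. The differences are cosmetic. Your thickening step is unnecessary, since strict separation of a point from a compact convex set in finite dimensions needs no interior. You also simply drop the $\varepsilon\|y\|^2$ term when choosing $\varepsilon$, whereas the paper keeps it.
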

	Usually, one separates convex sets by hypersurfaces,
	i.e.~by a level set of a linear form.
	To ensure ellipticity of our polynomials we use ellipsoids
	which are compact defined by
	the positive definite operator $A$
	to separate points from compact convex sets.
	Since ellipsoids are symmetric,
	we have to assume that $y\notin C^\pm$ instead of just $y\notin C$.
	See Figure~\ref{fig:ellipse} for a visualization.
	\begin{figure}[h]
		\centering
		\includegraphics[width=\textwidth, trim = 1cm 1.5cm 5cm 4cm, clip]{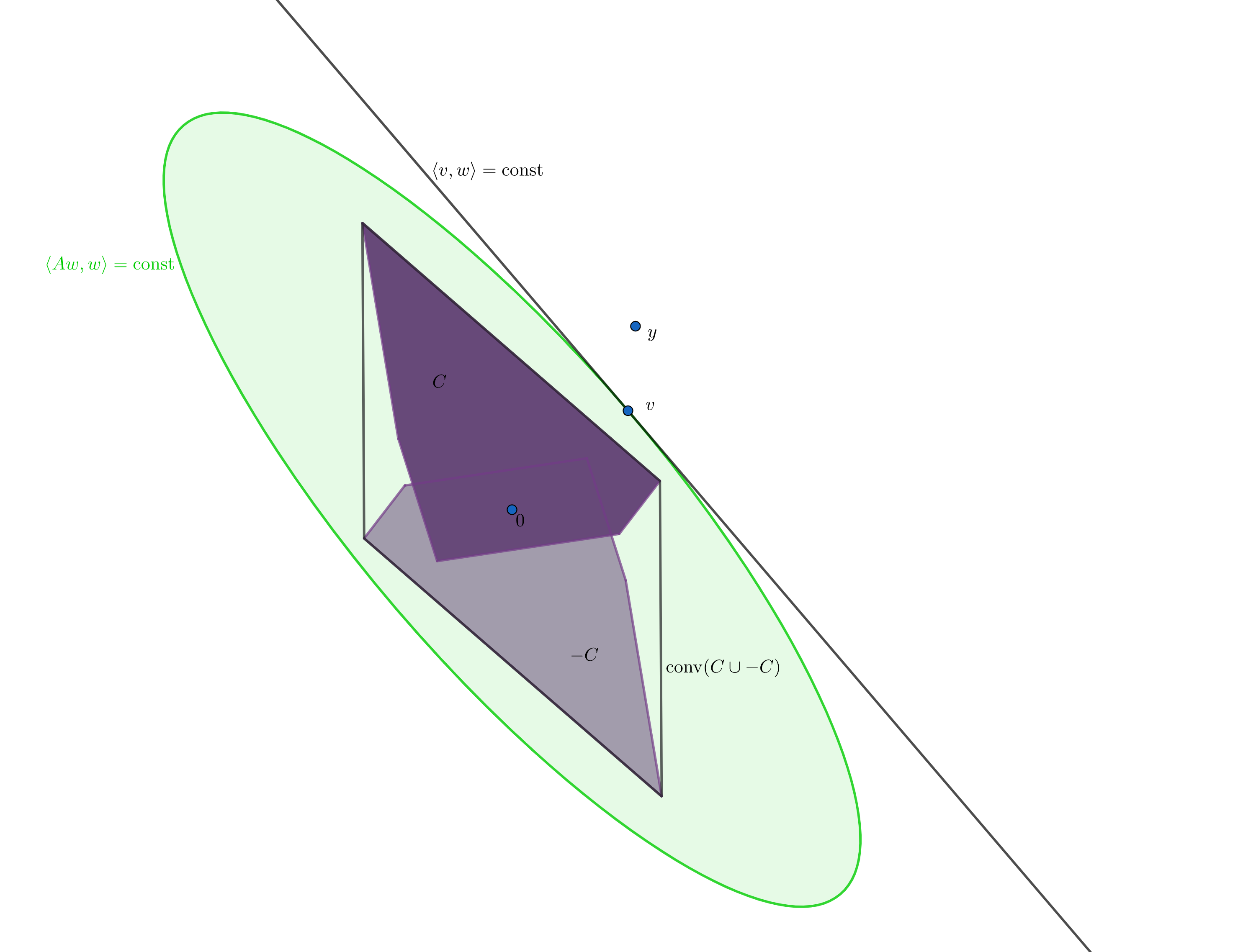}
		\caption{Comparison of separation by hypersurface and by ellipsoid}
		\label{fig:ellipse}
	\end{figure}

	\begin{proof}
		Since $C^{\pm}$ is compact, convex, and $-C^\pm = C^\pm$,
		we can find $v\in V$ such that
		\[
			b\coloneqq \langle v, y\rangle > \max_{c\in C^{\pm}}\langle v,c\rangle
			=\max_{c\in C}|\langle v,c\rangle| \eqqcolon M.
		\]
		Let $R=\max_{c\in K}\|c\|$.
		Since $b^2 -M^2 >0$ we can choose $\varepsilon>0$ small such that
		\[
			\varepsilon (R^2-\|y\|^2) < b^2 -M^2.
		\]
		For $w\in V$ define
		\[
			Aw = \langle v,w\rangle v + \varepsilon w.
		\]
		By definition
		\[
			\langle Aw,w'\rangle = \langle v,w\rangle \langle v,w'\rangle + \varepsilon \langle w,w'\rangle,
			\qquad w,w'\in V,
		\]
		and
		\[
			\langle Aw,w\rangle = \langle v,w\rangle^ 2 + \varepsilon \|w\|^2 >0,
			\qquad w\in V,
		\]
		so that $A$ is symmetric positive definite.
		For $c\in C$ we have
		\[
			\langle A c,c\rangle \leq M^2+\varepsilon R^2
			< b^2 +\varepsilon \|y\|^2= \langle A y,y\rangle
		\]
		which proves the lemma.
	\end{proof}

	\begin{proposition}
		\label{prop:sepinPR}
		Let $C\subseteq V$ be compact, convex, and $W$-invariant.
		Let $z = x+iy\in V_\C$ with $y=\Im z \notin C^{\pm}$.
		Then there is $p\in \mathcal{P}_\R$ such that
		\[
			p(z)=0
			\qquad
			\text{and}
			\qquad
			0\notin p(V + iC).
		\]
	\end{proposition}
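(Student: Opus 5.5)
The plan is to build $p$ from the quadratic form of Lemma~\ref{la:sepellipse}, make it real with a conjugate factor, and make it $W$-invariant by a product over $W$. Since $C$ is $W$-invariant, $C^\pm=\operatorname{conv}(C\cup(-C))$, so Lemma~\ref{la:sepellipse} applies to $y\notin C^\pm$. It gives a positive definite symmetric $A$ with
\[
m:=\max_{c\in C}\langle Ac,c\rangle<\langle Ay,y\rangle .
\]
Let $q(v)=\langle Av,v\rangle$, extended complex-bilinearly to $V_\C$, and put $s=q(z)$. Set
\[
r(v)=(q(v)-s)(q(v)-\overline{s}),\qquad p(v)=\prod_{w\in W} r(wv).
\]

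The routine properties come first. $r$ has real coefficients, because $(T-s)(T-\overline s)=T^2-2\Re s\,T+|s|^2$ and $q$ is real on $V$. The product over $W$ is $W$-invariant and still real. Its top-degree part is $\prod_{w}q(wv)^2$, which is strictly positive on $V\setminus\{0\}$ because $A$ is positive definite, so $p\in\mathcal{P}_\R$. Finally $p(z)=0$, coming from the factor $w=e$.

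It then remains to show $0\notin p(V+iC)$. Because $C$ is $W$-invariant, $w(V+iC)=V+iC$, so it suffices to show $q(u)\notin\{s,\overline s\}$ for all $u\in V+iC$. Write $u=a+ib$ with $a\in V$, $b\in C$, and abbreviate $\beta=\langle Ab,b\rangle\le m$. Then
\[
q(u)=\langle Aa,a\rangle-\beta+2i\langle Aa,b\rangle .
\]
For fixed $b$ and fixed imaginary part $2t$, the minimum of $\langle Aa,a\rangle$ is $t^2/\beta$. So the value $s$ is attained exactly when there is $b\in C$ with
\[
\Re s+\beta\;\ge\;(\Im s)^2/(4\beta).
\]
Here $\Re s=\langle Ax,x\rangle-\langle Ay,y\rangle$ and $\Im s=2\langle Ax,y\rangle$. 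The value $\overline s$ is handled by the symmetric condition.

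The main obstacle is the real part $x=\Re z$. If $\langle Ax,x\rangle$ is small relative to $\langle Ay,y\rangle-m$, the strict inequality $m<\langle Ay,y\rangle$ gives $\Re s+\beta<0$, and the required non-attainment follows. For large $x$ that is $A$-orthogonal to $y$, however, $s$ is a large positive real number and lies in $q(V+iC)$. So the choice of $A$ must be adapted to $x$ as well. I would first exploit the freedom in Lemma~\ref{la:sepellipse}: the vector $v$ ranges over an open cone and $\varepsilon$ can be taken small. The aim is to choose $A=v\otimes v+\varepsilon I$ such that the Cauchy--Schwarz defect is controlled, namely
\[
\langle Ax,y\rangle^2\ \ge\ \beta\bigl(\langle Ax,x\rangle-\langle Ay,y\rangle+\beta\bigr)\quad\text{for all }\beta\le m .
\]
Since this expression is a downward parabola in $\beta$, the right-hand side can be checked at $\beta=m$ or at its vertex.

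If that fails, the fallback is to replace the single quadratic form $q$ by a product of several such forms $q_j$. Each $q_j$ would be associated with a separating direction, and the $q_j$ together would pin down both $x$ and $y$. The factors would again be made real and $W$-invariant as above, with the vanishing at $z$ arranged through one factor and non-vanishing on $V+iC$ checked factor by factor using the same minimization of $\langle Aa,a\rangle$ for fixed $b$.
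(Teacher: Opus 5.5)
Your proposal has a genuine gap: the one nontrivial step, $0\notin p(V+iC)$, is never proved. You correctly observe that with $q(v)=\langle Av,v\rangle$ centred at the origin and the $A$ of Lemma~\ref{la:sepellipse}, the construction fails, since for $x$ large and $A$-orthogonal to $y$ the value $s=q(z)$ lies in $q(V+iC)$. What follows is a plan, not an argument: an inequality you aim to arrange by tuning $A$, which in any case must be strict, and then a vague fallback using several quadratic forms.

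The missing idea, which is what the paper uses, is to centre the quadratic form at the real part of $z$:
\[
q(v)=\langle A(v-x),v-x\rangle+\langle Ay,y\rangle .
\]
Since $x\in V$, this $q$ has real coefficients, so your conjugate factor $(q-s)(q-\overline s)$ becomes unnecessary. It also gives $q(z)=-\langle Ay,y\rangle+\langle Ay,y\rangle=0$. For $a\in V$ and $b\in C$,
\[
\Re q(a+ib)=\langle A(a-x),a-x\rangle-\langle Ab,b\rangle+\langle Ay,y\rangle>0 ,
\]
and this holds with the lemma's $A$ unchanged, for every $x$. The translation destroys the $W$-invariance of $q$, but your product over $W$ already restores it. The $W$-invariance of $C$ then makes every factor $q(wv)$ nonzero on $V+iC$.

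Your own route can also be closed, but only with an argument the proposal does not contain. Write $u=a+ib$, $\alpha=\langle Aa,a\rangle$ and $\beta=\langle Ab,b\rangle$. Cauchy--Schwarz gives $|q(u)|\le\alpha+\beta$, so $|q(u)|-\Re q(u)\le 2\beta\le 2m$ on $V+iC$. This quantity is unchanged under conjugation, so it suffices to have $|s|-\Re s>2m$. Take $A_\varepsilon w=\langle v,w\rangle v+\varepsilon w$. As $\varepsilon\to0$, $s\to(\langle v,x\rangle+i\langle v,y\rangle)^2$, so $|s|-\Re s\to 2\langle v,y\rangle^2$. Meanwhile $m\to\max_{c\in C}\langle v,c\rangle^2<\langle v,y\rangle^2$. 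Hence $\varepsilon$ small enough, depending on $x$, works. Translating by $x$ avoids this continuity argument entirely.
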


	\begin{proof}
		Choose $A \colon V\to V$
		as in Lemma~\ref{la:sepellipse} for $y=\Im z$.
		Let
		\[
			B\coloneqq \langle Ay,y\rangle>0
		\]
		and define the quadratic polynomial
		\[
			q(v)=\langle A(v -x),v-x \rangle+B,
			\qquad v\in V_\C.
		\]
		We note that $A$ and $\langle\cdot,\cdot\rangle$ are extended complex (bi-)linearly.
		Then
		\[
			q(z)= -\langle Ay, y\rangle+B=0
		\]
		and
		\[
			q(v)\in \R
		\]
		for all $v\in V$.
		The leading homogeneous part of $q$ is $\langle Av,v\rangle$
		which implies by positive definiteness of $A$
		that $q$ is elliptic.
		We now set
		\[
			p(v)=\prod_{w\in W}q(wv).
		\]
		$p$ is clearly elliptic, $W$-invariant and satisfies $p(v)\in \R$ for $v\in V$.
		Thus $p\in \mathcal{P}_\R$
		and $p(z)=0$.

		It remains to show that $p$ has no zero in $V+iC$.
		For a fixed $w\in W$ and $v\in V+iC$ we have
		\begin{align*}
			q(w v)
  &=\langle A(w v -x),wv-x \rangle+B\\
  &=\langle A(w \Re v -x), (w\Re v-x)\rangle - \langle Aw \Im v , w\Im v\rangle +B + 2i \langle A(w \Re v -x),w\Im v \rangle.
		\end{align*}
		It follows
		\[
			\Re q(w v)
			\geq  
			B- 	\langle A w\Im v , w \Im v\rangle
		\]
		Because $C$ is $W$-invariant, $w \Im v \in C$.
		Hence Lemma~\ref{la:sepellipse} gives
		\[
			B> \langle A w\Im v , w \Im v\rangle
		\]
		and therefore
		\[
			\Re q(w v)>0.
		\]
		In particular, $q(w v) \neq 0$ and therefore $p(v)\neq 0$.
		This completes the proof.
	\end{proof}
	We use the above proposition to determine $H_\R(x)$.
	To this end, let us choose a positive (closed) Weyl chamber $V_+$ in $V$.
	Then there exists a unique element $w_0\in W$ such that $w_0 V_+ \subseteq -V_+$
	and $w_0^2=1$.
	Thus $\iota = -w_0$ is an involution of $V$ fixing $V_+$.
	We denote by $x^\circ$ the element $\frac 12 (x + \iota x)$.
	\begin{theorem}
		\label{thm:realpolynomialhull}
		For $x\in V$ we have
		\[
			H_\R(x)= V_{\operatorname{Her}} \cap (V+i C_x^\pm)
			= V _{\operatorname{Her} } \cap (V + iC _{x^\circ})
		\]
		and
		\[
			V + i C_x \subseteq H(x) \subseteq V+iC_x^\pm.
		\]
	\end{theorem}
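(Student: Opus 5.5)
The plan is to get both chains of inclusions from the results already proved in this section: Corollary~\ref{cor:hullhermit}, Propositions~\ref{prop:imagecomplexpolynomial} and~\ref{prop:sepinPR}, and Lemma~\ref{la:hermitegeneral}. The only genuinely new ingredient is a convexity argument for the last equality $V_{\operatorname{Her}}\cap(V+iC_x^\pm)=V_{\operatorname{Her}}\cap(V+iC_{x^\circ})$.

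\emph{First equality.} The inclusion $\supseteq$ is the lower bound in Corollary~\ref{cor:hullhermit}. For $\subseteq$, let $v\in H_\R(x)$. Corollary~\ref{cor:hullhermit} already gives $v\in V_{\operatorname{Her}}$. Suppose $\Im v\notin C_x^\pm$. The set $C\coloneqq C_x^\pm$ is compact, convex, $W$-invariant and satisfies $C^\pm=C$. So Proposition~\ref{prop:sepinPR} yields $p\in\mathcal P_\R$ with $p(v)=0$ and $0\notin p(V+iC_x^\pm)$. Since $[0,1]x\subseteq C_x^\pm$, we get $0\notin p(V+i[0,1]x)\cap\R$. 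This contradicts $p(v)\in p(V+i[0,1]x)\cap\R$, which holds by the definition of $H_\R(x)$.

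\emph{Bounds for $H(x)$.} The inclusion $V+iC_x\subseteq H(x)$ is Proposition~\ref{prop:imagecomplexpolynomial}. For the upper bound, take $v\in H(x)$ with $\Im v\notin C_x^\pm$. Proposition~\ref{prop:sepinPR} produces $p\in\mathcal P_\R\subseteq\mathcal P$ with $p(v)=0\notin p(V+i[0,1]x)$, which contradicts $v\in H(x)$.

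\emph{Second equality.} Since $C_x$ depends only on $Wx$, we may assume $x\in V_+$; then $\iota x\in V_+$ and $x^\circ\in V_+$. Recall that $-x=w_0\iota x\in W\iota x$, so $C_x^\pm=\operatorname{conv}(C_x\cup C_{\iota x})$.
\begin{enumerate}
\item $\supseteq$: $x^\circ$ is the midpoint of $x,\iota x\in C_x^\pm$, and $C_x^\pm$ is convex and $W$-invariant, so $C_{x^\circ}\subseteq C_x^\pm$.
\item $\subseteq$, reduction: let $v\in V_{\operatorname{Her}}$ with $\overline v=wv$. Comparing real and imaginary parts gives $w\Im v=-\Im v$. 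Hence $y\coloneqq\Im v$ satisfies $-y\in Wy$. Its dominant representative $y_+$ then satisfies $\iota y_+=y_+$, because $-y_+\in Wy_+$ forces $w_0(-y_+)=y_+$.
\item $\subseteq$, support functions: use the standard criterion that a point $y$ lies in $C_z$ (for $z$ dominant) iff $\langle\lambda,y_+\rangle\le\langle\lambda,z\rangle$ for all $\lambda\in V_+$. For $\lambda\in V_+$ and $\lambda'\coloneqq\tfrac12(\lambda+\iota\lambda)\in V_+$, the $\iota$-invariance of $y_+$ gives $\langle\lambda,y_+\rangle=\langle\lambda',y_+\rangle$. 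From $y\in C_x^\pm$ we get
\[
\langle\lambda',y_+\rangle\le\max(\langle\lambda',x\rangle,\langle\lambda',\iota x\rangle)=\langle\lambda',x\rangle=\langle\lambda,x^\circ\rangle,
\]
using $\iota\lambda'=\lambda'$ and $\iota^2=1$. Hence $y\in C_{x^\circ}$.
\end{enumerate}

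The main obstacle is this last step. One has to justify the support-function description of $C_z$ and of $\operatorname{conv}(C_x\cup C_{\iota x})$, namely that the maximum of $\langle\lambda,\cdot\rangle$ over $C_x$ is attained at $x$ for dominant $\lambda$ and $x$. This is standard: $x-wx$ is a nonnegative combination of positive roots. With that in hand, the rest is formal bookkeeping with $\iota$.
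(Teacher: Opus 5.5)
Your argument is correct once $x$ is taken dominant (see the caveat below), and it follows the paper's structure. The first equality and the bounds on $H(x)$ come from Corollary~\ref{cor:hullhermit}, Proposition~\ref{prop:imagecomplexpolynomial} and Proposition~\ref{prop:sepinPR}. The paper passes through $H_\R(x)=H_\R(C_x^\pm)$ from Proposition~\ref{prop:lowerboundhullR}; you instead use the monotonicity coming from $[0,1]x\subseteq C_x^\pm$, which suffices for the upper bounds. For the second equality, i.e.\ Lemma~\ref{la:nonhermitianconvexhull}, you use the same two facts as the paper: the dominant representative of $\Im v$ is $\iota$-fixed, and the criterion of \cite[Lemma~IV.8.3]{gaga}. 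The only difference is the bookkeeping. You symmetrize the test functional, $\lambda\mapsto\frac12(\lambda+\iota\lambda)$, and read off the support function of $C_x^\pm=\operatorname{conv}(C_x\cup C_{\iota x})$. The paper instead symmetrizes the point, rewriting $u=\frac{u-w_0u}{2}$ as a convex combination of the vectors $\frac{wx-w_0wx}{2}$ and bounding each of them. Your version avoids the coefficients $c_w^\pm$.

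One correction: the reduction ``$C_x$ depends only on $Wx$, so we may assume $x\in V_+$'' is not legitimate for the second equality, because $x^\circ=\frac12(x+\iota x)$ is not a function of $Wx$. In fact that equality is false for non-dominant $x$. Take $A_2$, realized on $V=\{(a,b,c)\in\R^3\colon a+b+c=0\}$ with $V_+=\{a\geq b\geq c\}$. The vector $x=(-1,2,-1)$ has $x^\circ=0$. On the other hand, $v=i(\frac32,0,-\frac32)$ equals $i x_+^\circ$ with $x_+=(2,-1,-1)$ and satisfies $\overline{v}=w_0v$. So $v$ lies in $V_{\operatorname{Her}}\cap(V+iC_x^\pm)$ but not in $V=V_{\operatorname{Her}}\cap(V+iC_{x^\circ})$. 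The statement must therefore be read with $x\in V_+$; this is needed only for the second equality, and the rest holds for all $x$. The paper's proof makes the same tacit assumption: its inequality \eqref{eq:wxbdd} requires $x$ dominant. The assumption is also satisfied in the application, where $x=\max(0,\delta'_\lambda)\lambda$ with $\lambda\in\mathfrak{a}^\ast_+$. With this hypothesis stated rather than derived, your proof is complete.
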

	\begin{proof}
		Proposition~\ref{prop:sepinPR} shows $H_\R(C_x^\pm)\subseteq V+i C_x^\pm$.
		The theorem then follows from Proposition~\ref{prop:lowerboundhullR}
		and Corollary~\ref{cor:hullhermit} and Lemma~\ref{la:nonhermitianconvexhull} below.
	\end{proof}

	\begin{remark}
		It follows that $H(x)=V+iC_x$
		if $-x\in Wx$.
	\end{remark}

	\begin{lemma}
		\label{la:nonhermitianconvexhull}
		\[
			V_{\operatorname{Her}} \cap (V+i C_x^\pm)
			= V_{\operatorname{Her} }\cap ( V + iC _{x^\circ})
		\]
	\end{lemma}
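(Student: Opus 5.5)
The inclusion ``$\supseteq$'' is immediate, so the plan is to reduce ``$\subseteq$'' to a statement about the imaginary part alone and then compare support functions on the dominant chamber. Throughout, $\iota=-w_0$ is an isometry of $V$ preserving $V_+$ and the Weyl group orbits, since $\iota w\iota^{-1}\in W$ for all $w\in W$.

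\emph{Step 1: the easy inclusion.} The point $x^\circ=\tfrac12(x+\iota x)$ is the midpoint of $x\in Wx$ and $\iota x=-w_0x\in W(-x)$, so $x^\circ\in C_x^\pm$. Since $C_x^\pm$ is convex and $W$-invariant, it follows that $C_{x^\circ}\subseteq C_x^\pm$. Intersecting $V+iC_{x^\circ}\subseteq V+iC_x^\pm$ with $V_{\operatorname{Her}}$ gives ``$\supseteq$''.

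\emph{Step 2: reduction to the imaginary part.} Let $v=a+ib\in V_{\operatorname{Her}}$ with $b\in C_x^\pm$. Then $\overline v=a-ib=wv$ for some $w\in W$. Comparing real and imaginary parts gives $wb=-b$, so $-b\in Wb$. It therefore suffices to prove the following: if $y\in C_x^\pm$ and $-y\in Wy$, then $y\in C_{x^\circ}$.

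Both hypotheses and the conclusion are $W$-invariant. We may thus replace $x$ and $y$ by their dominant representatives; this changes neither $C_x^\pm$ nor $x^\circ$, because $\iota$ commutes with taking dominant representatives. The dominant representative of $-y$ is $\iota y$, so the condition $-y\in Wy$ becomes $\iota y=y$. Likewise $W(-x)=W\iota x$, so
\[
C_x^\pm=\operatorname{conv}(Wx\cup W\iota x), \qquad x,\iota x\in V_+ .
\]

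\emph{Step 3: support functions.} For $z,\omega\in V_+$ we have the standard identity
\[
\max_{w\in W}\langle wz,u\rangle=\langle z,u_+\rangle ,
\]
where $u_+$ is the dominant representative of $u$. Consequently, for dominant $y$ and $z$, $y\in C_z$ holds if and only if $\langle y,\omega\rangle\le\langle z,\omega\rangle$ for all $\omega\in V_+$. The same identity gives, for $\omega\in V_+$,
\[
h_{C_x^\pm}(\omega)=\max(\langle x,\omega\rangle,\langle \iota x,\omega\rangle).
\]
Now fix $\omega\in V_+$ and set $\omega^\circ=\tfrac12(\omega+\iota\omega)\in V_+$. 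Since $\iota y=y$ and $\iota$ is an isometry, $\langle y,\omega\rangle=\langle y,\omega^\circ\rangle$. Because $y\in C_x^\pm$,
\[
\langle y,\omega^\circ\rangle\le \max(\langle x,\omega^\circ\rangle,\langle\iota x,\omega^\circ\rangle).
\]
The two entries of this maximum coincide, because $\iota^2=1$, and each equals $\tfrac12(\langle x,\omega\rangle+\langle \iota x,\omega\rangle)=\langle x^\circ,\omega\rangle$. Hence $\langle y,\omega\rangle\le\langle x^\circ,\omega\rangle$ for all $\omega\in V_+$, and the criterion above gives $y\in C_{x^\circ}$.

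\emph{Main obstacle.} The main point needing care is the support-function criterion for membership in $C_z$. Its proof rests on the inequality $\langle y,u\rangle\le\langle y_+,u_+\rangle$, a standard Weyl-chamber fact, which the plan uses freely. Once that criterion and the symmetrization trick $\omega\mapsto\omega^\circ$ are in place, the argument is short.
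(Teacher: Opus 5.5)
Your argument is correct for $x\in V_+$, but one step in Step~2 is false. The reduction of $x$ claims that passing to the dominant representative leaves $x^\circ$ unchanged. It does not. Indeed, $x^\circ=\frac12(x+\iota x)$ is not determined, even up to $W$, by the orbit $Wx$, because $\iota(wx)=(w_0ww_0)\,\iota x$ rather than $w\,\iota x$. The fact that $\iota$ commutes with taking dominant representatives controls $x$ and $\iota x$ separately, not their average.

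Here is an explicit failure in $A_2$. Take $V=\{(a,b,c)\in\mathbb{R}^3\colon a+b+c=0\}$, $V_+=\{a\ge b\ge c\}$ and $\iota(a,b,c)=(-c,-b,-a)$. The vector $x=(-1,2,-1)$ has $x^\circ=0$, whereas its dominant representative $x_+=(2,-1,-1)$ has $x_+^\circ=(\tfrac32,0,-\tfrac32)=:y$. The point $y$ is the midpoint of $(2,-1,-1)\in Wx$ and $(1,1,-2)\in W(-x)$, and $-y=w_0y$. Hence $iy$ lies in $V_{\operatorname{Her}}\cap(V+iC_x^\pm)$ but not in $V_{\operatorname{Her}}\cap(V+iC_{x^\circ})=V$.

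So the lemma itself fails for non-dominant $x$ and must be read with $x\in V_+$. The paper uses this tacitly, through $\langle wx,v\rangle\le\langle x,v\rangle$ for $v\in V_+$, and it holds in the application $x=\max(0,\delta'_\lambda)\lambda$. Take $x\in V_+$ as a hypothesis and keep only the reduction for $y$, which is legitimate. Also, in Step~3 the quantifier should read ``for $z\in V_+$ and $u\in V$''.

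With that correction your proof is complete, and it is the dual form of the paper's argument. The paper also reduces via \cite[Lemma~IV.8.3]{gaga} to $\langle u,v\rangle\le\langle x^\circ,v\rangle$ for dominant $u,v$, and also uses $w_0u=-u$. It then averages the point:
it writes $u=\frac12(u-w_0u)$ and expands $u$ as a convex combination of the $\pm wx$. It regroups this, reindexing $c^-_w$ as $c^-_{w_0w}$, into a convex combination of the points $\frac12(wx-w_0wx)$. Each of these is bounded by applying $\langle wx,\cdot\rangle\le\langle x,\cdot\rangle$ at both $v$ and $\iota v$.

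You average the test vector instead. You replace $\omega$ by $\omega^\circ$ and use that the support function $\max(\langle x,\cdot\rangle,\langle\iota x,\cdot\rangle)$ of $C_x^\pm$ collapses to $\langle x^\circ,\cdot\rangle$ on $\iota$-fixed dominant vectors. Your route avoids the coefficient bookkeeping and shows directly why $x^\circ$ is the right extreme point. The paper's route is more hands-on and never computes $h_{C_x^\pm}$.
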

	\begin{proof}
		Since
		\[
			x^\circ=\frac{1}{2}x + \frac 12 w_0(-x)\in C_x^\pm
		\]
		it is clear that the right hand side is contained in the left hand side.

		Conversely, let $u' + iu \in V_{\operatorname{Her} }$ with $u\in C_x^\pm$.
		Without loss of generality we assume that $u\in V_+$.
		\cite[Lemma~IV.8.3]{gaga} implies that it is sufficient to show
		\[
			\langle u,v \rangle \leq \langle x^\circ,v\rangle
			\quad \forall v\in V_+.
		\]
		By definition of $V_{\operatorname{Her} }$ there is $w\in W$
		such that $wu=-u \in -V_+$.
		Similarly, $w_0u \in -V_+$.
		This implies $w_0u=-u$ (see e.g.~\cite[Thm.~1.12 (b)]{Hum90}).
		We write
		\[
			u=\sum_{\pm,w\in W}^{} c_w^\pm (\pm wx)
			,
			\quad 
			\sum_{\pm,w\in W}^{} c_w^\pm =1, c_w^\pm \in [0,1]
			.
		\]
		Then
		\begin{align*}
			u&= \frac{u-w_0 u}{2} = 
			\frac 12 \sum_{\pm,w\in W}^{} c_w^\pm (\pm wx)
			+ \frac 12 \sum_{\pm,w\in W}^{} c_w^\pm (\mp w_0wx)
			\\
			 &=
			 \frac 12 \sum_{w\in W}^{} c_w^+ (wx - w_0wx)
			 - \frac 12 \sum_{w\in W}^{} c_w^- (wx- w_0wx)
			 \\
			 &=
			 \frac 12 \sum_{w\in W}^{} c_w^+ (wx - w_0wx)
			 + \frac 12 \sum_{w\in W}^{} c_{w_0w}^- (wx- w_0wx)
			 \\
			 &=\sum_{w\in W} (c_w^++c_{w_0w}^-) \frac{wx - w_0wx}2.
		\end{align*}
		Since
		\[
			\sum_{w\in W}^{} (c_w^+ + c_{w_0 w}^-) = 1
		\]
		the proof is complete if we can show
		\[
			\left\langle \frac{wx -w_0 wx}2,v \right\rangle \leq \langle x^\circ,v\rangle.
		\]
		We use once more \cite[Lemma~IV.8.3]{gaga}
		to see that
		\begin{equation}
			\label{eq:wxbdd}
			\langle wx,v\rangle \leq \langle x,v\rangle
			\quad \forall v\in V_+.
		\end{equation}
		Now we can calculate
		\begin{align*}
			\langle wx-w_0 wx,v \rangle  = 
			\langle wx,v \rangle  + \langle wx,-w_0 v\rangle \leq
			\langle x,v \rangle  + \langle x,-w_0 v\rangle =
			2\langle x^\circ ,v\rangle
		\end{align*}
		where we used \eqref{eq:wxbdd} for both $v$ and $-w_0v$.
		This completes the proof.
	\end{proof}

	\section{Spectral theory on \texorpdfstring{$\Gamma \backslash G/K$}{Gamma\\G/K}}%
	\label{sec:Spectralonlocally}

	\subsection{Spectrum of individual self-adjoint operators}
	\label{sub:specindividual}
	In this section we analyze the spectrum of
	self-adjoint elliptic operators $D\in \mathbb D(G/K)$
	on locally symmetric spaces.
	To this end we fix an arbitrary discrete subgroup $\Gamma \subseteq G$.
	Let us recall the definition of the limit cone $\mathcal{L}_\Gamma$
	\cite{Ben96}:
	\begin{align*}
		\mathcal{L}_\Gamma = 
		\{v\in \mathfrak{a} \colon
			|\mu_+(\Gamma) \cap \mathcal{C}| = \infty
			\text{ for all open cones } \mathcal{C} \subseteq \mathfrak{a}_+
		\text{ containing } v\}.
	\end{align*}
	In order to utilize the resolvent estimate Proposition~\ref{prop:decaykernel}
	we assume in the present Section~\ref{sec:Spectralonlocally}
	that
	\[
		\mathcal{L}_\Gamma \subseteq \mathfrak{a}_{++}.
	\]
	We also recall the definition of the modified critical exponent
	$\delta_\lambda'$ for $\lambda$ positive on $\mathcal{L}_\Gamma$:
	\begin{equation}
		\label{eq:defcritexp}
		\delta'_\lambda = \inf\left\{s\in \R \colon \sum_{\gamma\in \Gamma} e^{(-s\lambda -\rho)\mu_+(\gamma)}<\infty\right\}.
	\end{equation}
	We note that $\delta'_\lambda$ can even be defined for every $\lambda$
	which is non-negative on $\mathcal{L}_\Gamma$ \cite{limitcone}.
	Since we only work with $\lambda\in \mathfrak{a}_+^\ast$
	and we assumed $\mathcal{L}_\Gamma \subseteq \mathfrak{a}_{++}$
	the definition \eqref{eq:defcritexp} is enough for our purpose.

	Let us now fix $p\in \mathcal{P}_\R$.
	In this section we will connect the spectrum of $D_{p}=\operatorname{HC} ^{-1}(p(i\cdot))$
	with $\delta_\lambda'$.

	We first note that $D_p$ is an elliptic essentially self-adjoint differential operator
	on $L^2(G/K)$.
	Indeed, the cotangent bundle $T^\ast G/K = G \times_K \mathfrak{p}^\ast$
	is the associated vector bundle for the coadjoint representation
	of $K$ on $\mathfrak{p}^\ast$.
	The principal symbol of $D_p$ is given by
	\[
		G \times _K \mathfrak{p}^\ast \ni [g,\xi]\mapsto p(i \operatorname{Ad}^\ast(k) \xi)
	\]
	where $\Ad^\ast(k) \xi$ is the unique element in $\Ad^\ast(K)\xi \cap \mathfrak{a}^\ast$.
	Thus $D_p$ is elliptic.

	$D_p$ is symmetric on $L^2(G/K)$
	as
	\begin{align}
		\label{eq:symmetricDO}
		\chi_\lambda(D^\ast) = \overline{\chi_{-\overline{\lambda}}(D)}
		=\overline{p(-i \overline{\lambda})}=
		p(i\lambda)=\chi_\lambda(D),
		\quad
		\lambda\in \mathfrak{a}^\ast_\C,
	\end{align}
	since $p\in \mathcal{P}_\R$
	and therefore $D_p =D_p^\ast$.
	The essential self-adjointness now follows from
	\cite[Thm.~2.2]{NS59}.

	Let us denote by ${}_\Gamma D_p$ the differential operator induced by $D_p$
	on $C_c(\Gamma \backslash G/K)$.
	The same arguments as above show that ${}_\Gamma D_p$ is elliptic 
	and essentially self-adjoint on $L^2(\Gamma \backslash G/K)$.
	We denote by $\sigma(_\Gamma D_p)\subseteq \R$ its spectrum.

	The main result of this section is the following theorem.
	\begin{theorem}
		\label{thm:spectrumsinglesa}
		Let $p\in \mathcal{P}_\R$ with $\deg p >\frac 12 \dim N +\dim A-1$
		and $D = D_{p}$ as above.
		Then
		\[
			\sigma({}_\Gamma D_p) \subseteq
			p(\mathfrak{a}^\ast + i [0,\max(0,\delta'_\lambda)]\lambda)
		\]
		for every $\lambda\in \mathfrak{a}_+^\ast$.
	\end{theorem}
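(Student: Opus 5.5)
We follow the rank one strategy from the introduction: for a real $z$ outside the right-hand side we build the resolvent of ${}_\Gamma D_p$ at $z$ by $\Gamma$-averaging the resolvent kernel on $G/K$. Fix $\lambda\in\mathfrak{a}_+^\ast$ and put $\theta=\max(0,\delta'_\lambda)$. Let $z\in\R$ with $z\notin p(\mathfrak{a}^\ast+i[0,\theta]\lambda)$. Because $p$ is elliptic, this image is closed, so there is a slightly larger $\theta'>\theta$ with $z\notin p(\mathfrak{a}^\ast+i[0,\theta']\lambda)$. This holds locally uniformly near $z$, and we may take $\theta'>\delta'_\lambda$. Since $\chi_\mu(D_p)=p(i\mu)$, this says
\[
z\notin\{\chi_\mu(D_p)\colon \mu\in[0,\theta']\lambda+i\mathfrak{a}^\ast\},
\]
up to the sign convention $\mu\mapsto-\mu$, which we handle using $W$-invariance together with the reality of $p$ as in Proposition~\ref{prop:lowerboundhullR}. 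Hence Proposition~\ref{prop:decaykernel}, applied with $\theta'\lambda$ in place of $\lambda$, gives
\[
|k(D_p,z,e^H)|\le C\,e^{(-\theta'\lambda-\rho)H}
\]
for $H$ with $\alpha(H)\ge\varepsilon$ for all $\alpha\in\Pi$.

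We then define the averaged kernel
\[
k_\Gamma(z,gK,hK)=\sum_{\gamma\in\Gamma}k(D_p,z,h^{-1}\gamma gK)
\]
and show that it converges and defines a bounded operator on $L^2(\Gamma\bk G/K)$ that inverts ${}_\Gamma D_p-z$.

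The main obstacle is that the resolvent estimate is only available when $\mu_+(h^{-1}\gamma g)$ stays a fixed distance from the walls. Here we use $\mathcal{L}_\Gamma\subseteq\mathfrak{a}_{++}$.
\begin{enumerate}
\item Choose an open cone $\mathcal C$ with $\mathcal{L}_\Gamma\subseteq\mathcal C$ and $\overline{\mathcal C}\setminus\{0\}\subseteq\mathfrak{a}_{++}$. By the definition of the limit cone, only finitely many $\mu_+(\gamma)$ lie outside $\mathcal C$.
\item For $g,h$ in a compact set, $\|\mu_+(h^{-1}\gamma g)-\mu_+(\gamma)\|$ is uniformly bounded. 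So for all but finitely many $\gamma$ the point $\mu_+(h^{-1}\gamma g)$ lies in $\mathcal C$ and far from the origin, hence satisfies $\alpha\ge\varepsilon$ for all $\alpha\in\Pi$.
\item Off this region $k$ is smooth. Near the identity it has only an integrable singularity, since it is the kernel of $(D_p-z)^{-1}$, an elliptic operator of order $>\dim G/K/2$ by the degree assumption. So the finitely many exceptional terms are harmless.
\end{enumerate}
With these reductions the tail is bounded by a constant times $\sum_\gamma e^{(-\theta'\lambda-\rho)\mu_+(\gamma)}$, which is finite because $\theta'>\delta'_\lambda$. The series therefore converges locally uniformly.

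Boundedness on $L^2$ follows from a Schur test. The function $\sup_{hK\in F}\sum_{\gamma\in\Gamma}|k(h^{-1}\gamma g)|$, taken over a fundamental domain $F$, is controlled in the same way; equivalently, $\int_{\Gamma\bk G/K}|k_\Gamma(x,y)|\,dy$ is bounded uniformly in $x$. This uses the bound on $\sum_\gamma e^{(-\theta'\lambda-\rho)\mu_+(h^{-1}\gamma g)}$ for $h$ fixed and $g$ ranging over $G/K$, which may require the standard comparison $\mu_+(h^{-1}\gamma g)$ versus $\mu_+(\gamma)$ together with an integration against $e^{-\rho}$-type decay of the volume. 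It is symmetric in the two variables because $k$ is real for real $z$ and satisfies $k(g^{-1})=k(g)$ by self-adjointness.

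Finally, $({}_\Gamma D_p-z)$ applied to the operator with kernel $k_\Gamma$ gives the identity on $C_c^\infty(\Gamma\bk G/K)$. To see this, lift to $G/K$: locally $f\ast k_\Gamma$ equals $\tilde f\ast k(D_p,z)$ for the $\Gamma$-periodic lift $\tilde f$, where the sum is locally finite by the decay. Since $D_p$ is essentially self-adjoint, we get $z\in\rho({}_\Gamma D_p)$, which proves the inclusion.
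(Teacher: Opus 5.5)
\textbf{Gap: the Schur-test step.} Everything up to the locally uniform convergence of $\sum_\gamma |k(D_p,z,h^{-1}\gamma g)|$ for $g,h$ in compact sets is fine and matches the paper. Your $\theta'$ is the paper's $\eta_\lambda(z)-\varepsilon$. There is also no sign issue: $\mu\mapsto p(i\mu)$ maps $[0,\theta']\lambda+i\mathfrak{a}^\ast$ exactly onto $p(\mathfrak{a}^\ast+i[0,\theta']\lambda)$.

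The proof breaks at the Schur test. Any bound obtained from $|k_\Gamma|\le\sum_\gamma|k|$ unfolds to
\[
\int_{\Gamma\backslash G/K}\sum_{\gamma\in\Gamma}|k(D_p,z,h^{-1}\gamma g)|\,d(\Gamma gK)=\|k(D_p,z,\cdot)\|_{L^1(G/K)},
\]
which does not involve $\Gamma$ at all, so this route cannot detect $\delta'_\lambda$.

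There is no ``$e^{-\rho}$-type decay of the volume''. The density in polar coordinates is $\asymp e^{2\rho(H)}$, so your bound $e^{(-\theta'\lambda-\rho)H}$ is integrable against it only if $\theta'\lambda-\rho>0$ on $\mathfrak{a}_+\setminus\{0\}$.

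In fact $k(D_p,z,\cdot)\notin L^1(G/K)$ for every real $z\in p(\mathfrak{a}^\ast+i\operatorname{conv}(W\rho))\setminus p(\mathfrak{a}^\ast)$. The spherical transform of an integrable bi-$K$-invariant function extends continuously and boundedly to the closed tube $\operatorname{conv}(W\rho)+i\mathfrak{a}^\ast$. Here it would have to agree with $(\chi_\mu(D_p)-z)^{-1}$, which blows up at a point of that tube. So the Schur test fails already for $\Gamma=\{e\}$, and it fails at all the points where $\delta'_\lambda$ should enter.

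Taking the supremum over a non-compact fundamental domain also undoes your step (ii). The comparison $\|\mu_+(h^{-1}\gamma g)-\mu_+(\gamma)\|\le C$ needs $g,h$ in a compact set, and Proposition~\ref{prop:decaykernel} says nothing near the walls.

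\textbf{How the paper closes this.} The condition $\theta'>\delta'_\lambda$ only gives local information. The paper turns it into spectral information without ever building a bounded operator at real $z$; this is Lemma~\ref{la:specfromestimate}.
\begin{enumerate}
\item For $\varepsilon>0$ the resolvents $({}_\Gamma D_p-(t\pm i\varepsilon))^{-1}$ exist by self-adjointness.
\item Their matrix coefficients against $\varphi,\psi\in C_c^\infty(\Gamma\backslash G/K)$ are the unfolded sums $\sum_\gamma\iint k(D_p,t\pm i\varepsilon,g^{-1}\gamma h)\,\tilde\varphi(hK)\,\tilde\psi(gK)$, with compactly supported lifts $\tilde\varphi,\tilde\psi$. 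So only $g,h$ in compact sets occur.
\item By Stone's formula, $P_{[z-\delta,z+\delta]}$ is the strong limit of $\frac{1}{2\pi i}\int_{z-\delta}^{z+\delta}$ of the difference of these resolvents.
\item Your bound $f(\gamma)$ holds uniformly for complex $z'$ near $z$, as Proposition~\ref{prop:decaykernel} provides. It therefore gives the dominating function $2f(\gamma)|\tilde\varphi(hK)||\tilde\psi(gK)|$.
\item Since $z\notin\sigma(D_p)$, the kernels at $t+i\varepsilon$ and $t-i\varepsilon$ have the same limit. Dominated convergence then gives $P_{[z-\delta,z+\delta]}=0$.
\item The finitely many exceptional $\gamma$ only produce the spectral projection of $D_p$ on $G/K$ for the same interval, which vanishes.
\end{enumerate}
Replacing your Schur test and final paragraph with this argument gives a complete proof.
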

	We first note that $\sigma(D_p)$ equals $\overline{p(i \mathfrak{a}^\ast)} \subseteq \R$
	(see \cite[Lemma~3.2]{WW23}) and thus is of the form $\pm [a,\infty)$
	for $a\in \R$ as $p$ is elliptic.
	Without loss of generality, $\sigma(D_p)=[a,\infty)$.
	We use the following lemma to push the convergence
	of the averaged resolvent kernels to a proper $L^2$-resolvent.

	\begin{lemma}
		\label{la:specfromestimate}
		Suppose $|k(D_p,z,g^{-1} \gamma h)| \leq f(\gamma)$ locally uniform in $z<a$
		and for $g,h$ in compact sets of $G$
		(except for possibly finitely many $\gamma$ where $g ^{-1} \gamma h \in K$).
		If $\sum_{\gamma\in \Gamma} f(\gamma) < \infty$,
		then $z\notin \sigma(_\Gamma D_p)$.
	\end{lemma}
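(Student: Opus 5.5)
The plan is to show that the $\Gamma$-averaged kernel
\[
K_z(\Gamma gK,\Gamma hK)\coloneqq\sum_{\gamma\in\Gamma} k(D_p,z,g^{-1}\gamma h)
\]
is the Schwartz kernel of a bounded inverse of ${}_\Gamma D_p - z$ on $L^2(\Gamma\backslash G/K)$, and that it depends holomorphically on $z$. The natural route is through the resolvent identity on a half-line where the resolvent is already known. For $z<a$ we have $z\notin\sigma(D_p)=[a,\infty)$, so Lemma~\ref{la:kernelformulaGK} applies. The sum converges absolutely and locally uniformly in $(g,h)$ and $z$ by the hypothesis $\sum_\gamma f(\gamma)<\infty$. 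The finitely many exceptional $\gamma$, where $g^{-1}\gamma h\in K$, only contribute the local singularity of $k$ at the identity. Since $\deg p$ is large, this singularity is integrable (indeed continuous). Alternatively, one can treat these terms by noting that the convolution operator with $k(D_p,z)$ is bounded $L^2\to L^2$ locally.

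The key steps are as follows. First, fix a compactly supported smooth $f$ on $\Gamma\backslash G/K$ and lift it to a $\Gamma$-invariant function $\tilde f$ on $G/K$. Define $R_z f(\Gamma gK)=\int_{G/K}\tilde f(hK)\,k(D_p,z,h^{-1}gK)\,dhK$. Unfolding this over a fundamental domain expresses it through $K_z$. By the locally uniform bounds, $R_z f$ is smooth and depends holomorphically on $z$.

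Second, verify $({}_\Gamma D_p-z)R_zf=f$ and $R_z({}_\Gamma D_p-z)f=f$. This holds because $k(D_p,z)$ is a fundamental solution of $D_p-z$ on $G/K$ and $D_p$ commutes with left translations. The one subtle point is that $\tilde f$ is not in $L^2(G/K)$. To handle this, one uses a partition of unity: write $\tilde f=\sum_\gamma \gamma\cdot f_0$ with $f_0\in C_c(G/K)$. Then $R_zf=\sum_\gamma (D_p-z)^{-1}(\gamma f_0)$, and this sum converges locally uniformly together with all derivatives. For the derivatives, one uses elliptic regularity, or applies the same argument with $D_p$ applied to $f_0$ instead.

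Third, obtain $L^2$-boundedness. For $z<a$ real, the kernel $K_z$ is symmetric. I would show that $R_z$ is bounded via the Schur test, using $\sup_g\int|K_z(g,h)|\,dh<\infty$. Uniform-in-$g$ control requires more than local compactness, which I expect to be the main obstacle. To get around it, I would try the following. Since ${}_\Gamma D_p$ is self-adjoint, it suffices to show that the distance from $z$ to its spectrum is positive. If $z\in\sigma({}_\Gamma D_p)$, there would be Weyl approximate eigenfunctions $u_n$ with $\|u_n\|=1$ and $\|({}_\Gamma D_p-z)u_n\|\to0$. One can localize these by cutoffs: choose the $u_n$ to be compactly supported, and apply $R_z$ to $({}_\Gamma D_p-z)u_n$, which returns $u_n$.

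The difficulty is still to bound $R_z$ in $L^2$ uniformly over supports. Instead, I would use holomorphy in $z$. The hypothesis is locally uniform in $z$, so $\langle R_z f_1,f_2\rangle$ is holomorphic near $z_0$ for $f_1,f_2\in C_c$. For $z$ off the real axis, it agrees with $\langle({}_\Gamma D_p-z)^{-1}f_1,f_2\rangle$, because both are inverses on $C_c$. This function equals $\int(t-z)^{-1}\,d\mu_{f_1,f_2}(t)$ for the spectral measure. The Stieltjes inversion formula shows that if this extends holomorphically across a real interval around $z_0$ for all $f_1,f_2$ in a dense set, then the spectral measures of a dense set vanish there. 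Hence the spectral projection of that interval is zero, and $z_0\notin\sigma({}_\Gamma D_p)$. This is the argument I would commit to, carefully justifying the identification of $R_z$ with the true resolvent for nonreal $z$ via the second step together with essential self-adjointness.
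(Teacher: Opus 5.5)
Your overall strategy is the paper's. You test the spectral projection of ${}_\Gamma D_p$ near $z$ against $\varphi,\psi\in C_c^\infty(\Gamma\backslash G/K)$ via Stone's formula, write the matrix coefficient of the resolvent as the $\Gamma$-sum of integrals of the $G/K$-kernel, and use the locally uniform bound $f(\gamma)$ to show the jump across the real axis vanishes. The paper does this by dominated convergence and you do it by holomorphy of the summed kernel, which is equivalent. The exceptional $\gamma$ are handled in both cases through the resolvent of $D_p$ on $L^2(G/K)$, which has no spectrum near $z$. A minor correction: the degree hypothesis does not make $k(D_p,z,\cdot)$ continuous at $eK$; that requires $\deg p>\dim N+\dim A$. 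Local integrability, or your alternative via the bounded $G/K$ resolvent, is all you need.

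The gap is the identification
\[
\langle R_z f_1,f_2\rangle=\langle({}_\Gamma D_p-z)^{-1}f_1,f_2\rangle \quad\text{for non-real } z \text{ near } z_0 .
\]
This is the one nontrivial input, and the paper does not prove it in the text but takes it from \cite[Lemma~4.2]{WW23}. Your justification, that both are inverses on $C_c$ plus essential self-adjointness, does not give it.

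\begin{itemize}
\item Set $w=R_zf-({}_\Gamma D_p-z)^{-1}f$. Then $w$ solves $(D_p-z)w=0$. Essential self-adjointness (the maximal domain equals the closure's domain) forces $w=0$ only if $w\in L^2(\Gamma\backslash G/K)$, that is, only if $R_zf\in L^2$. Non-$L^2$ solutions of $(D_p-z)w=0$ certainly exist.
\item The identity $R_z({}_\Gamma D_p-z)f=f$ on $C_c^\infty$ transfers to the resolvent only after extending $R_z$ boundedly to the closure of $({}_\Gamma D_p-z)C_c^\infty$.
\end{itemize}

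Both routes need global $L^2$ control of $R_z$. This is exactly the Schur-test obstacle you identified and then sidestepped, and the lemma's purely local hypothesis does not supply it.

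To close the gap you must either invoke a result like \cite[Lemma~4.2]{WW23}, or prove the identity in two stages.
\begin{itemize}
\item First prove it where it is easy, for instance for $z$ with $k(D_p,z,\cdot)\in L^1(G/K)$. There unfolding gives $\sup_g\int_{\Gamma\backslash G/K}\sum_\gamma|k(D_p,z,g^{-1}\gamma h)|\,d(\Gamma hK)=\|k(D_p,z,\cdot)\|_{L^1(G/K)}$. So the Schur test makes $R_z$ bounded, and it must then be the resolvent.
\item Then continue analytically in each half-plane. This needs the $\Gamma$-sum to converge locally uniformly on a connected region joining that regime to a neighbourhood of $z_0$, which is more than the lemma assumes.
\end{itemize}
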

	\begin{proof}
		The kernel of the resolvent $(_\Gamma D_p-z)^{-1}$ if it exists is given by $\sum_{\gamma\in \Gamma} k(D_p,z,gK,\gamma hK)$.
		However, the convergence of this series does not immediately imply the well-definition of the resolvent
		as an operator on $L^2(\Gamma \bk G/K)$.
		Therefore, we use Stone's formula as in \cite[Prop.~4.4]{WW23}.

		Let $P_I$ be the spectral projector of ${}_\Gamma D_p$ for a Borel subset $I\subseteq\R$.
		We have to show $P_{[z-\delta,z+\delta]}=0$ for some $\delta >0$.
		By Stone's formula \cite[Prop.~5.14]{Sch12}
		\begin{align*}
			\frac 12 (P_{[a,b]}+P_{]a,b[}) = \lim_{\varepsilon\to 0} \frac 1 {2\pi i} \int_a^b (_\Gamma D_p-(t+i\varepsilon))^{-1} - (_\Gamma D_p-(t-i\varepsilon))^{-1}\: dt.
		\end{align*}
		Here the limit $\varepsilon \to 0$ is understood in the strong sense.
		The occurring resolvents are well-defined as $D_p$ is symmetric.
		Thus we have to show 
		\begin{align*}
			\lim_{\varepsilon\to 0}  \int_{z-\delta}^{z + \delta} 
			(_\Gamma D_p-(t+i\varepsilon))^{-1} - (_\Gamma D_p-(t-i\varepsilon))^{-1} \: dt = 0.
		\end{align*}
		Let $\varphi,\psi \in C_c^\infty(\Gamma \bk G/K)$
		and $\tilde \varphi$ (resp.~$\tilde \psi$) in $C_c^\infty(G/K)$
		such that $\sum_{\gamma\in \Gamma} \tilde \varphi (\gamma gK)=\varphi(\Gamma gK)$ (resp.~
		$\sum_{\gamma\in \Gamma} \tilde \psi (\gamma gK)=\psi(\Gamma gK)$).
		Then we must show
		\begin{align*}
			\lim_{\varepsilon\to 0}  \int_{z-\delta}^{z + \delta}
			\langle
			[(_\Gamma D_p-(t+i\varepsilon))^{-1} - (_\Gamma D_p-(t-i\varepsilon))^{-1}]
			\varphi,\overline\psi\rangle_{L^2(\Gamma\bk G/K)} \: dt = 0.
		\end{align*}
		By \cite[Lemma~4.2]{WW23} we thus have to show
		\begin{align*}
			\lim_{\varepsilon\to 0}  \int_{z-\delta}^{z + \delta}
			\sum_{\gamma\in \Gamma}
			\int_{G/K}^{} 
			\int_{G/K}^{} 
			[k(D_p,t+i\varepsilon ,gK,\gamma hK) - 
			k(D_p,t-i\varepsilon ,gK,\gamma hK)]
			\tilde\varphi(  hK) \tilde \psi(gK)\: d{gK} \: d{hK} 
			\: dt = 0.
		\end{align*}
		We can estimate the integrand by
		\begin{align*}
			2f(\gamma)
			|\tilde\varphi|(  hK)| \tilde \psi|(gK)
		\end{align*}
		which is integrable over $[z-\delta,z+\delta]\times \Gamma \times G/K \times G/K$.
		We can therefore use Lebesgue's theorem to move $\lim_{\varepsilon\to 0}$ under the integral sign.
		This finishes the proof as the difference of the kernels converges to $0$.
		The finitely many $\gamma\in \Gamma$ where the estimate does not hold also converge to $0$ as
		they simply give
		\begin{align*}
			\lim_{\varepsilon\to 0}&  \int_{z-\delta}^{z + \delta}
			\int_{G/K}^{} 
			\int_{G/K}^{} 
			[k(D_p,t+i\varepsilon ,gK,\gamma hK) - 
			k(D_p,t-i\varepsilon ,gK,\gamma hK)]
			\tilde\varphi(  hK) \tilde \psi(gK)\: d{gK} \: d{hK} 
			\: dt
			\\
					       &=\lim_{\varepsilon\to 0}  \int_{z-\delta}^{z + \delta}
					       \langle
					       [( D_p-(t+i\varepsilon))^{-1} - (D_p-(t-i\varepsilon))^{-1}]
					       \varphi(\gamma ^{-1} \cdot),\overline\psi\rangle_{L^2(\Gamma\bk G/K)} \: dt
					       \\
					       &=\frac{2\pi i}{2} \langle(P'_{[z-\delta,z+\delta]} + P'_{(z-\delta,z+\delta)}) \tilde \varphi(\gamma ^{-1} \cdot), \overline{\tilde \psi}\rangle
					       =0
		\end{align*}
		as $z\notin \sigma(D)$.
		Here $P'$ is the spectral projector for $D_p$.
	\end{proof}

	\begin{proof}
		[Proof of Theorem~\ref{thm:spectrumsinglesa}]
		Let $\lambda\in \mathfrak{a}_+^\ast$ and $z<a$ as before.
		We define
		\[
			\eta_\lambda(z) \coloneqq
			\inf \{ t\geq 0 \colon z= \chi_\mu(D_p) \text{ for } \Re \mu = t\lambda\}
		\]
		It follows from the ellipticity of $p$
		and $z\notin [a,\infty) = \{\chi_\mu(D_p)\colon \mu\in i\mathfrak{a}^\ast\}$
		that $\eta_\lambda(z)>0$.
		Hence, for any $\varepsilon>0$, we can apply Proposition~\ref{prop:decaykernel} to $(\eta_\lambda(z) -\varepsilon)\lambda$.
		We obtain
		\[
			|k(D_p,z,e^H)| \leq C e^{(- (\eta_\lambda(z)-\varepsilon)\lambda - \rho)H}.
		\]

		If $\eta_\lambda(z)> \delta'_\lambda$, then we choose $s\in (\max(0,\delta'_\lambda),\eta_\lambda(z))$
		so that 
		\[
			\sum_{\gamma\in \Gamma} e^{(-s\lambda -\rho)\mu_+(\gamma)}<\infty
		\]
		and
		\begin{equation}
			\label{eq:estkernelbypoincare}
			|k(D_p,z,e^H)|\leq C e^{(-s\lambda-\rho)H}.
		\end{equation}
		From Lemma~\ref{la:specfromestimate}
		we now get $z\notin \sigma(_\Gamma D_p)$.
		We note that we need $\mathcal{L}_\Gamma \subseteq \mathfrak{a}_{++}$ in order to get
		the estimate \eqref{eq:estkernelbypoincare}
		for almost all $\gamma\in \Gamma$
		because of the assumption on $H$ in Proposition~\ref{prop:decaykernel}.
		Clearly, $\eta_\lambda(z)> \delta'_\lambda$ if and only if $\eta_\lambda(z) > \max(0,\delta'_\lambda)$
		if and only if
		$z\notin \chi_{[0,\max(0,\delta'_\lambda)]\lambda +i \mathfrak{a}^\ast}(D_p)$.
		Thus we have
		\[
			z\notin \chi_{[0,\max(0,\delta'_\lambda)]\lambda +i \mathfrak{a}^\ast}(D_p)
			\implies
			z\notin \sigma(_\Gamma D_p)
		\]
		i.e.
		\begin{equation*}
			\label{eq:inclusionspec}
			\sigma(_\Gamma D_p) \subseteq 
			\chi_{[0,\max(0,\delta'_\lambda)]\lambda +i \mathfrak{a}^\ast}(D_p)
			\cap \R
			=p(\mathfrak{a}^\ast + i [0,\max(0,\delta_\lambda')]\lambda)\cap \R.
			\qedhere
		\end{equation*}
	\end{proof}

	\begin{remark}
		We note that Theorem~\ref{thm:spectrumsinglesa}
		is not sharp
		for all $\lambda$
		in the sense that
		\[
			\min \sigma(_\Gamma D_p) > 
			\min p(\mathfrak{a}^\ast +i[0,\max(0,\delta'_\lambda)]\lambda )\cap \R
		\]
		might hold.
		(One always has $\geq$ by Theorem~\ref{thm:spectrumsinglesa}.)
		Indeed, ignoring the assumption on the degree
		and considering the shifted Laplacian $D = \Delta-\|\rho\|^2$,
		we have $\chi_\lambda(D)=-\langle\lambda,\lambda\rangle$
		and $p(\lambda)= \langle\lambda,\lambda\rangle$.
		One easily calculates that
		\[
			\min p(\mathfrak{a}^\ast +i[0,\max(0,\delta'_\lambda)]\lambda )\cap \R
			= -\max(0,\delta'_\lambda)^2 \|\lambda\|^2
		\]
		On the other hand, it is known that
		$\min \sigma(_\Gamma D)=- \max(0,\delta')^2$
		where
		\[
			\delta' = \inf \left\{ s\in \R\colon \sum_{\gamma\in \Gamma} e^{-s\|\mu_+(\gamma)\|-\rho(\mu_+(\gamma))} \infty\right\}
			.
		\]
		Hence, if $\delta' >0$,
		Theorem~\ref{thm:spectrumsinglesa} yields
		$\delta' \leq \delta'_\lambda \|\lambda\|$ for all $\lambda\in \mathfrak{a}_+^\ast$.
		By \cite[Lemma~3.1]{limitcone}
		$\lambda\mapsto \delta'_\lambda \|\lambda\|$ is strictly convex
		which implies that we can have equality for at most one $\lambda$.
		If $\Gamma$ is Zariski dense, then this minimum is indeed achieved for $\lambda=\langle v_\Gamma',\cdot\rangle$
		where $v_\Gamma'$ maximizes $\psi_\Gamma-\rho$ on $\|v\|=1$
		where $\psi_\Gamma$ is the growth indicator function.
		See \cite{limitcone} for more details.
	\end{remark}

	\subsection{The joint spectrum}%
	\label{sub:The joint spectrum}
	In this section we consider the joint spectrum of all operators in
	$\mathbb{D}(G/K)$
	instead of their spectra individually.
	There are many equivalent definition for the joint spectrum, see \cite[Prop.~3.6]{WW23}.
	We will give a definition using only self-adjoint operators.
	This has the advantage that we do not have to use any decomposition
	of $L^2(\Gamma\bk G)$ into irreducible representations.
	\begin{definition}
		Let $p_1,\ldots,p_r$ be homogeneous generating polynomials
		of $\operatorname{Poly}(\mathfrak{a}_\C^\ast)^W$
		with $D_{p_j}$ symmetric
		(equivalently $p_j( \mathfrak{a}^\ast)\in\R$
		or $\overline{p_j(\lambda)} = p_j( \overline{\lambda})$).
		The set
		\[
			\left\{\lambda\in \mathfrak{a}_\C^\ast
				\colon {}_\Gamma \sum_{j=1}^r (D_{p_j} - p_j(\lambda))^\ast (D_{p_j}-p_j(\lambda))
				\text{ is not invertible on }
			L^2(\Gamma \backslash G/K)\right\}
		\]
		is called the joint spectrum $\wt \sigma(\Gamma \bk G/K)$ of $\Gamma\bk G/K$.
		We also write $\wt \sigma$ for $\wt \sigma(\Gamma\bk G/K)$.
	\end{definition}
	Note that the operator in question is essentially self-adjoint
	by \cite[Cor.~2.4]{NS59}
	so $\wt \sigma$ is well-defined.

	We will use the following description of $\wt \sigma$
	reducing the question to elliptic operators.
	\begin{lemma}
		\label{la:defjointspectrumelliptic}
		The joint spectrum $\wt \sigma$ coincides with
		\[
			\{\lambda\in \mathfrak{a}_\C^\ast\colon
				p(i\lambda)\in \sigma(_\Gamma D_{p}) \:
			\forall p\in \mathcal{P}_\R\}.
		\]
	\end{lemma}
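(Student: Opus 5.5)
We read the definition of $\wt\sigma$ with the spectral parameter $\chi_\lambda(D_{p_j})=p_j(i\lambda)$, consistent with the convention $D_p=\operatorname{HC}^{-1}(p(i\cdot))$, so that $\lambda\in\wt\sigma$ iff $T_\lambda\coloneqq{}_\Gamma\sum_j (D_{p_j}-p_j(i\lambda))^\ast(D_{p_j}-p_j(i\lambda))$ is not invertible. The plan is to apply the joint spectral theorem to the commuting self-adjoint operators $A_j\coloneqq\overline{{}_\Gamma D_{p_j}}$, $j=1,\ldots,r$. This yields a projection-valued measure $E$ on $\R^r$ with compact-free support $S\coloneqq\operatorname{supp}E\subseteq\R^r$. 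Both sides of the lemma are then described in terms of $S$ and the point $t(\lambda)\coloneqq(p_1(i\lambda),\ldots,p_r(i\lambda))\in\C^r$.

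\textbf{Step 1 (functional calculus).} First we must justify that the $A_j$ strongly commute and that, for every real $W$-invariant $p$, the closure of ${}_\Gamma D_p$ equals $q(A_1,\ldots,A_r)$. Here $q$ is the real polynomial with $p=q(p_1,\ldots,p_r)$, given by Chevalley's theorem with real generators as in the proof of Lemma~\ref{la:hermitegeneral}.

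For the identification I would argue as follows. The operator $q(A)$, defined through $E$, is self-adjoint. It extends ${}_\Gamma D_p$ on $C_c^\infty$, since the $D_{p_j}$ commute as differential operators. By the essential self-adjointness of ${}_\Gamma D_p$ for $p\in\mathcal P_\R$ (via \cite{NS59}), the two must coincide.

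Strong commutation is the delicate point. I would try to derive it by using the elliptic operator ${}_\Gamma D_{\langle\cdot,\cdot\rangle^N}$ for large $N$ together with elliptic regularity. The goal is to see that each ${}_\Gamma D_{p_j}$ maps its domain core into the domain of the others, and that the resolvents of $A_j$ and $A_k$ commute. Alternatively, I would cite the standard Nelson-type criterion: commuting symmetric operators that are essentially self-adjoint on a common invariant core on which a dominating elliptic operator is essentially self-adjoint strongly commute. I expect this step to be the main obstacle.

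\textbf{Step 2 (spectral mapping).} Given Step 1, $T_\lambda=f_\lambda(A)$ with $f_\lambda(s)=\sum_j|s_j-t_j(\lambda)|^2$. Hence $T_\lambda$ is non-invertible iff $f_\lambda$ has essential infimum $0$ on $S$, i.e.\ iff $t(\lambda)\in S$; in particular this forces $t(\lambda)\in\R^r$. Likewise $\sigma({}_\Gamma D_p)=\overline{q(S)}$.

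\emph{Inclusion $\subseteq$.} If $t(\lambda)\in S$, then $p(i\lambda)=q(t(\lambda))\in q(S)\subseteq\sigma({}_\Gamma D_p)$ for every $p\in\mathcal P_\R$.

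\textbf{Step 3 (converse by a separating polynomial).} Suppose $t\coloneqq t(\lambda)\notin S$. We construct $p\in\mathcal P_\R$ with $q(t)\notin\overline{q(S)}$.

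Put $a\coloneqq\Re t$. We may assume the generators include $p_2(\mu)=\langle\mu,\mu\rangle$, so that $p_2(i\mu)$ is real and $\le 0$ on $\mathfrak a^\ast$. Set
\[
  q_\varepsilon(s)=\sum_j (s_j-a_j)^2+\varepsilon s_2^{2N},
\]
with $2N\cdot 2$ exceeding $2\deg p_j$ for all $j$. Then $p(\mu)=q_\varepsilon(p_1(\mu),\ldots,p_r(\mu))$ has top homogeneous part $\varepsilon\langle\mu,\mu\rangle^{4N}$, which is nonvanishing on $\mathfrak a^\ast$. So $p$ is elliptic, real and $W$-invariant, i.e.\ $p\in\mathcal P_\R$. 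On $S\subseteq\R^r$ we have $q_\varepsilon\ge0$.

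If $t$ is real, then $\sum_j(s_j-a_j)^2\ge d^2>0$ on $S$, where $d=\operatorname{dist}(t,S)$, while $q_\varepsilon(t)=\varepsilon t_2^{2N}<d^2$ for small $\varepsilon$. Hence $q_\varepsilon(t)\notin\overline{q_\varepsilon(S)}$.

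If $t$ is not real, then $q_\varepsilon(t)=-|\Im t|^2+\varepsilon t_2^{2N}$. For small $\varepsilon$ this stays at positive distance from $[0,\infty)\supseteq\overline{q_\varepsilon(S)}$.

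In both cases $p(i\lambda)\notin\sigma({}_\Gamma D_p)$, which proves the inclusion $\supseteq$ and completes the argument.
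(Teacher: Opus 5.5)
Your argument is correct, but it is organized differently from the paper's.

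\textbf{The paper's route.} The paper does not construct a joint spectral measure itself. The inclusion of $\widetilde\sigma$ in the right-hand side is quoted from \cite[Prop.~3.6]{WW23}. For the reverse inclusion it uses the single polynomial $p=\sum_{j=0}^r (p_j-\overline{p_j(i\lambda)})(p_j-p_j(i\lambda))$, with the extra elliptic term $p_0=-\langle\cdot,\cdot\rangle^N$. This $p$ lies in $\mathcal P_\R$ and satisfies $p(i\lambda)=0$. Moreover $D_p=T_\lambda+(D_{p_0}-p_0(i\lambda))^\ast(D_{p_0}-p_0(i\lambda))$, so $T_\lambda\ge c>0$ on $C_c^\infty$ forces ${}_\Gamma D_p\ge c$, i.e.\ $0\notin\sigma({}_\Gamma D_p)$. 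That direction needs only essential self-adjointness on $C_c^\infty$ and a quadratic-form comparison, not strong commutativity.

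\textbf{Your route.} You derive both inclusions uniformly from the support $S$ of a joint spectral measure and the identity $\sigma({}_\Gamma D_p)=\overline{q(S)}$. This is more transparent and yields more information. However, it puts all the analytic weight on Step~1, which you only sketch: self-adjointness of the closures of the ${}_\Gamma D_{p_j}$, and their strong commutativity. The generators are generally non-elliptic (e.g.\ the cubic generator for $\mathfrak{sl}_3$). This is essentially the content the paper imports from \cite{WW23}.

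\textbf{On Step 1.} Your domination idea is the right way to close it. The plain Nelson criterion would need $A_j^2+A_k^2$ to be essentially self-adjoint on $C_c^\infty$, and that does not follow from ellipticity. What does work is an elliptic estimate $\|D_{p_j}f\|\le C(\|D_Pf\|+\|f\|)$ on $C_c^\infty(\Gamma\backslash G/K)$ for an elliptic $P\in\mathcal P_\R$ of high degree. Combined with commutation on the invariant core $C_c^\infty$, it shows that each $\overline{{}_\Gamma D_{p_j}}$ commutes with the resolvent of $\overline{{}_\Gamma D_P}$. Restricting to spectral subspaces of $\overline{{}_\Gamma D_P}$, where all $A_j$ become bounded, then gives self-adjointness and strong commutativity.

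\textbf{Simplifying Step 3.} Follow the paper: take $q(s)=\sum_j (s_j-\overline{t_j})(s_j-t_j)$, which has real coefficients, plus an elliptic term of the same form built from $p_0$, which also vanishes at $t$. Then $q(t)=0$ while $q\ge\operatorname{dist}(t,S)^2>0$ on $S$. This removes both the $\varepsilon$ and the real/non-real case distinction.

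\textbf{Minor slip.} The top homogeneous part of your $p$ is $\varepsilon\langle\mu,\mu\rangle^{2N}$, of degree $4N$, not $\varepsilon\langle\mu,\mu\rangle^{4N}$.
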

	\begin{proof}
		Let us call the set in question $S$.
		By \cite[Prop.~3.6]{WW23}, $S$ is contains $\wt \sigma$.
		For the converse direction consider
		\[
			p(\mu)=\sum_{j=0}^{r} (p_j(\mu) - \overline{p_j(i\lambda)})
			(p_j(\mu) -p_j(i\lambda))
		\]
		where $p_0(\mu)=-\langle \mu,\mu\rangle^N$
		for $N$ large so that $p$ is elliptic.
		Since $p( \mathfrak{a}^\ast)\subseteq\R$, we have $p\in \mathcal{P}_\R$.
		Furthermore, the operator $D_{p}$ is
		precisely $ \sum_{j=0}^r (D_{p_j} - p_j(i\lambda))^\ast (D_{p_j}-p_j(i\lambda))$.
		Hence, if $\lambda\in S$, then
		\[
			0=p(i\lambda) \in
			\sigma(_\Gamma D_{p}).
		\]
		But $(D_{p_0} - p_0(i\lambda))^\ast (D_{p_0}-p_0(i\lambda))$
		is a positive operator
		so that
		\[
			0\in \sigma\left({}_\Gamma \sum_{j=1}^r (D_{p_j} - p_j(i\lambda))^\ast (D_{p_j}-p_j(i\lambda))
			\right)
		\]
		and therefore $\lambda\in \wt \sigma$.
	\end{proof}

	Since the operators $D_{p}$, $p\in \mathcal{P}_\R$, are symmetric and therefore have real 
	spectrum,
	we directly get the well-known Hermitian property
	$-\overline{\lambda} \in W\lambda$
	of the joint spectrum from Lemma~\ref{la:hermitegeneral}.

	We now use Theorem~\ref{thm:spectrumsinglesa} to bound the joint spectrum.

	\begin{proposition}
		\label{prop:jointspecinintersectionofpolys}
		For $\lambda\in \mathfrak{a}^\ast_+$ we have
		\[
			\wt \sigma \subseteq
			\{\lambda\in \mathfrak{a}_\C^\ast
				\colon
			p(i\lambda) \in p(\mathfrak{a}^\ast + i[0,\max (0,\delta'_\lambda)]\lambda) \: \forall p\in \mathcal{P}_\R\}.
		\]
	\end{proposition}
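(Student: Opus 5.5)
The plan is to combine Lemma~\ref{la:defjointspectrumelliptic} with Theorem~\ref{thm:spectrumsinglesa}. The one subtlety is that Theorem~\ref{thm:spectrumsinglesa} needs $\deg p > \frac12 \dim N + \dim A - 1$, while the proposition quantifies over all $p\in\mathcal{P}_\R$. Throughout, I write $\nu$ for the spectral parameter and keep $\lambda\in\mathfrak{a}_+^\ast$ fixed, to avoid the clash of letters in the statement. I also set $T\coloneqq \mathfrak{a}^\ast + i[0,\max(0,\delta'_\lambda)]\lambda$.

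First, take $\nu\in\wt\sigma$ and $p\in\mathcal{P}_\R$ with $\deg p$ large. By Lemma~\ref{la:defjointspectrumelliptic}, $p(i\nu)\in\sigma({}_\Gamma D_p)$. By Theorem~\ref{thm:spectrumsinglesa}, this gives $p(i\nu)\in p(T)$. So the claimed inclusion holds for every $p\in\mathcal{P}_\R$ of sufficiently high degree.

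To remove the degree restriction, I pass to powers or compositions. For arbitrary $p\in\mathcal{P}_\R$ of degree $m$, consider $q = p^k$ for $k$ large. Then $q$ is again elliptic, $W$-invariant and real, so $q\in\mathcal{P}_\R$, and its degree $km$ exceeds the threshold. The first step gives $p(i\nu)^k\in p(T)^k$, that is, $p(i\nu)^k = p(\mu)^k$ for some $\mu\in T$. This only determines $p(i\nu)$ up to a $k$-th root of unity, so instead I use a polynomial composition $q = f\circ p$ with $f$ a real one-variable polynomial of high degree that is injective on the relevant values. The natural candidate is $f(t)=t^k$ with $k$ odd, which is injective on $\R$. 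This suffices because both relevant values are real:
\begin{itemize}
\item $p(i\nu)$ is real, since $\nu\in\wt\sigma$ means $p(i\nu)\in\sigma({}_\Gamma D_p)\subseteq\R$ by Lemma~\ref{la:defjointspectrumelliptic}. Alternatively, one can use the Hermitian property via Lemma~\ref{la:hermitegeneral}.
\item The preimage value lies in $p(T)\cap\R$. Indeed, from $q(i\nu)\in q(T)$ we get $\mu\in T$ with $p(\mu)^k = p(i\nu)^k\in\R$.
\end{itemize}
However, $p(\mu)$ itself need not be real, so to conclude $p(\mu)=p(i\nu)$ I would use the real-spectrum route more carefully.

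The real-spectrum route uses the spectral mapping theorem: for $k$ odd, $\sigma({}_\Gamma D_{p^k}) = \sigma({}_\Gamma D_p)^k$, since ${}_\Gamma D_{p^k} = ({}_\Gamma D_p)^k$ as self-adjoint operators. What we want is that every real point $y\in\sigma({}_\Gamma D_p)$ lies in $p(T)$. Applying Theorem~\ref{thm:spectrumsinglesa} to $p^k$ gives $y^k = p(\mu)^k$ for some $\mu\in T$.

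This leaves the step I expect to be the main obstacle: excluding $p(\mu)= \omega y$ with $\omega$ a nontrivial root of unity. I would try a continuity/connectedness argument. The map $k\mapsto$ root choice is ambiguous, but letting $k$ range over two coprime odd values $k_1,k_2$ forces $\omega$ to be a $\gcd(k_1,k_2)=1$ root only if the same $\mu$ works, which it need not.

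The cleaner fallback is to bypass powers entirely and verify the degree hypothesis is not really needed. Replace $p$ by $p + \epsilon\langle\cdot,\cdot\rangle^N$ with real $\epsilon$ of the sign making it still elliptic and $N$ large, apply the theorem to this perturbation, and let $\epsilon\to0$. This needs continuity of spectra in $\epsilon$ (via the resolvent and the elliptic estimates) and closedness of $p(T)\cap\R$, which follows from ellipticity because $T$ has bounded imaginary part. I would adopt this perturbation argument as the main route.
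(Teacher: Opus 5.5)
Your high-degree step via Lemma~\ref{la:defjointspectrumelliptic} and Theorem~\ref{thm:spectrumsinglesa} is exactly right, and you correctly diagnose why $p^k$ alone fails. The route you finally adopt, however, leaves the decisive step unproved.

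Fix $\nu\in\widetilde\sigma$ and set $p_\epsilon=p+\epsilon\langle\cdot,\cdot\rangle^N$. No continuity of spectra is needed. Lemma~\ref{la:defjointspectrumelliptic}, applied to $p_\epsilon\in\mathcal P_{\mathbb R}$, gives $p_\epsilon(i\nu)\in\sigma({}_\Gamma D_{p_\epsilon})\subseteq p_\epsilon(T)$ directly, i.e.\ $p_\epsilon(i\nu)=p_\epsilon(\mu_\epsilon)$ for some $\mu_\epsilon\in T$.

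The whole difficulty is the limit $\epsilon\to0$. Closedness of $p(T)\cap\mathbb R$ for the fixed $p$ does not help unless the $\mu_\epsilon$ stay in a compact set. Otherwise $\epsilon\langle\mu_\epsilon,\mu_\epsilon\rangle^N$ need not tend to $0$. You need a bound uniform in $\epsilon$, and it depends on the sign of $\epsilon$. Your criterion ``the sign making it still elliptic'' is vacuous, since both signs give an elliptic polynomial once $2N>\deg p$.

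With the wrong sign the argument yields nothing. In rank one take $p=\langle\cdot,\cdot\rangle$ and $\epsilon<0$. Then $p_\epsilon(\mathfrak a^\ast)\supseteq(-\infty,0]$, so $p_\epsilon(i\nu)\in p_\epsilon(T)$ holds for every $\nu\in\mathfrak a^\ast$ once $|\epsilon|$ is small. By contrast, $-\|\nu\|^2\in p(T)$ holds only for $\|\nu\|\le\max(0,\delta'_\lambda)\|\lambda\|$.

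Your route can be repaired as follows.
\begin{enumerate}
\item The leading form $p_m$ has constant sign on $\mathfrak a^\ast\setminus\{0\}$: by connectedness if $\dim\mathfrak a\ge2$, and by invariance under $W=\{\pm1\}$ in rank one. Choose the sign of $\epsilon$ to agree with it.
\item Say $p_m>0$ and $\epsilon\in(0,1]$. Show $\Re p_\epsilon(x+iy)\ge c\|x\|^m-C$ for $x\in\mathfrak a^\ast$ and $y\in[0,\max(0,\delta'_\lambda)]\lambda$, with $c,C$ independent of $\epsilon$. This uses $\arg\langle x+iy,x+iy\rangle^N\to0$ as $\|x\|\to\infty$.
\item This bounds $\Re\mu_\epsilon$, so a subsequence converges to some $\mu\in T$ with $p(\mu)=p(i\nu)$.
\end{enumerate}
As written, this estimate is missing.

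The paper avoids all of this with the idea you were circling around. Since $c\coloneqq p(i\nu)$ is real (as you observed), $q=(p-c)^N$ is real and $W$-invariant, and it is elliptic with leading part $p_m^N$. So $q\in\mathcal P_{\mathbb R}$ has large degree and satisfies $q(i\nu)=0$. Lemma~\ref{la:defjointspectrumelliptic} and Theorem~\ref{thm:spectrumsinglesa} then give $0\in q(T)$, i.e.\ $p(\mu)=c$ for some $\mu\in T$.

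The root-of-unity ambiguity you hit with $p^k$ disappears because $0$ is the only $N$-th root of $0$. In your language, $f$ need not be injective. It only needs $f^{-1}(f(c))=\{c\}$, which $f(t)=(t-c)^N$ satisfies.
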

	\begin{proof}
		It follows immediately from Theorem~\ref{thm:spectrumsinglesa}
		and Lemma~\ref{la:defjointspectrumelliptic}
		that
		\[
			p(i\lambda) \in p(\mathfrak{a}^\ast + i[0,\max (0,\delta'_\lambda)]\lambda)
		\]
		for $\lambda\in \wt \sigma$
		if $\deg p$ big enough.
		For $p$ without sufficiently big degree
		we use $(p-p(i\lambda))^N$, $N\gg 0$, to complete the proof.
	\end{proof}

	Our work in Section~\ref{sec:Separating tubes by invariant polynomials}
	now shows \eqref{eq:bdspecnonHerm}.
	In fact,
	Proposition~\ref{prop:jointspecinintersectionofpolys}
	implies
	\[
		-i \wt \sigma \subseteq H_\R(\max(0,\delta'_\lambda) \lambda)
	\]
	and from Theorem~\ref{thm:realpolynomialhull} together with
	Lemma~\ref{la:nonhermitianconvexhull} it follows
	\[
		\Im H_\R(
		\max(0,\delta'_\lambda) \lambda)
		\subseteq \max(0,\delta'_\lambda)\operatorname{conv}\left(W \frac{\lambda-w_0 \lambda}{2}\right)
	\]
	completing the proof.

	\section{Convergence Domain of the Poincaré series}%
	\label{sec:prooftempered}
	In this section 
	we study the region of convergence of the Poincaré series
	$P_\lambda = \sum_{\gamma\in \Gamma} e^{-\lambda(\mu_+(\gamma))}$.
	This will lead to a new proof for Theorem~\ref{thmB}
	from Theorem~\ref{thmA}
	which we think is clearer than the one given in \cite{LWW},
	see Section~\ref{sub:Precise bounds on the domain of convergence}.
	We will also complete the proof of Theorem~\ref{thmA} in Section~\ref{sub:Completion of the proof for thmA}.

	Recall that we denote the longest Weyl group element associated to
	$\mathfrak{a}_+$ by $w_0$.
	In this section we write $\iota$ for the involution $-w_0$
	on $\mathfrak{a}^\ast$ and $\mathfrak{a}$.
	Moreover, we will denote by 
	$\mathfrak{a}_\iota^\ast$ the subspace $\ker(\iota - 1)  = \{\lambda\in \mathfrak{a}^\ast\colon \iota \lambda=\lambda\}$ and by
	$\mathfrak{a}^\ast_{+,\iota}$ the subcone $\mathfrak{a}^\ast_+ \cap \mathfrak{a}^\ast_\iota$.

	\subsection{Elementary properties}%
	\label{sub:Elementary properties}

	As before, let $\mathcal{L}_\Gamma$ be the limit cone
	of a discrete subgroup $\Gamma \subseteq G$.
	Contrary to Section~\ref{sec:Spectralonlocally}
	we do not have to assume that $\mathcal{L}_\Gamma \subseteq \mathfrak{a}_{++}$
	due to the correctness of Theorem~\ref{thmA}
	without this assumption.
	However, to deduce Theorem~\ref{thmB}
	we will assume that $\mathcal{L}_\Gamma$ is disjoint from some walls of $\mathfrak{a}_{++}$ as in \cite{limitcone}.

	We denote by $\mathcal{L}_\Gamma^\star$ the dual cone
	\[
		\mathcal{L}_\Gamma^\star \coloneqq \{\lambda\in \mathfrak{a}^\ast\colon \lambda(\mathcal{L}_\Gamma) \geq 0\}.
	\]
	and by
	\[
		\operatorname{int}\mathcal{L}_\Gamma^\star=
		\{\lambda\in \mathcal{L}_\Gamma \colon
		\lambda(\mathcal{L}_\Gamma \setminus \{0\})>0\}
	\]
	its interior.
	Let
	\[
		R\coloneqq \operatorname{cl} \left\{ \lambda\in \mathcal{L}_\Gamma^\star \colon \sum_{\gamma\in \Gamma} e^{-\lambda(\mu_+(\gamma))} <\infty\right\}.
	\]
	Clearly, $2\rho\in R$ as $\Gamma$ is discrete.
	The property $\wt \sigma \subseteq i \mathfrak{a}^\ast$
	is equivalent to asking whether $\rho\in R$.
	Due to the $\rho$-shift present in our setting,
	we consider
	\[
		R' \coloneqq R-\rho =\operatorname{cl} \left\{ \lambda\in \mathcal{L}_\Gamma^\star \colon \sum_{\gamma\in \Gamma} e^{-(\lambda+\rho)(\mu_+(\gamma))} <\infty\right\}
	\]
	so that $\rho\in R$ if and only if $0\in R'$.
	We formulate the following classical lemma that follows from the Hölder inequality.
	\begin{lemma}
		\label{la:Rconvex}
		$R$ and $R'$ are convex.
	\end{lemma}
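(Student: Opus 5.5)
The plan is to apply Hölder's inequality directly to the Poincaré series. For any $\lambda\in\mathfrak{a}^\ast$ write
\[
	P_\lambda = \sum_{\gamma\in\Gamma} e^{-\lambda(\mu_+(\gamma))}\in(0,\infty].
\]
Take $\lambda_1,\lambda_2$ with $P_{\lambda_1},P_{\lambda_2}<\infty$ and $t\in[0,1]$. Applying Hölder with exponents $1/t$ and $1/(1-t)$ to the factorization
\[
	e^{-(t\lambda_1+(1-t)\lambda_2)(\mu_+(\gamma))}
	= \bigl(e^{-\lambda_1(\mu_+(\gamma))}\bigr)^{t}\bigl(e^{-\lambda_2(\mu_+(\gamma))}\bigr)^{1-t}
\]
gives
\[
	P_{t\lambda_1+(1-t)\lambda_2}\leq P_{\lambda_1}^{t}P_{\lambda_2}^{1-t}<\infty .
\]
The endpoint cases $t\in\{0,1\}$ are trivial. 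Hence the convergence set $\{\lambda\colon P_\lambda<\infty\}$ is convex; equivalently, $\lambda\mapsto\log P_\lambda$ is convex.

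Next, the dual cone $\mathcal{L}_\Gamma^\star$ is an intersection of half-spaces $\{\lambda\colon\lambda(v)\geq 0\}$, $v\in\mathcal{L}_\Gamma$, so it is convex and closed. The intersection $\{\lambda\in\mathcal{L}_\Gamma^\star\colon P_\lambda<\infty\}$ of two convex sets is convex. The closure of a convex set in the finite-dimensional space $\mathfrak{a}^\ast$ is convex, since taking limits of the segments $t x_n+(1-t)y_n$ shows it. Therefore $R$ is convex.

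Finally, $R'=R-\rho$ is a translate of $R$, and the defining series of $R'$ is exactly $P_{\lambda+\rho}$ with the constraint shifted accordingly. So $R'$ is convex as well.

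No step is a real obstacle. The only point needing care is that convexity must be established before taking the closure, and this is handled by the closure step above.
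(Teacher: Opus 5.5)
Your proof is correct and is the argument the paper has in mind. The paper gives no proof beyond remarking that the lemma is classical and follows from Hölder's inequality; you supply the details: Hölder with exponents $1/t$ and $1/(1-t)$ on the Poincaré series, intersection with the closed convex cone $\mathcal{L}_\Gamma^\star$, and the fact that convexity is preserved under closure and under translation by $\rho$.
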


	From the definition of $\delta'_\lambda$ it is clear that
	\[
		R' = \bigcup_{\lambda\in \mathcal{L}_\Gamma^\star} [\delta'_\lambda,\infty)\lambda.
	\]
	It is also obvious that, if $\mu\in R'$ and $\lambda\geq \mu$ on $\mathcal{L}_\Gamma$,
	then $\lambda\in R'$.
	In particular,
	for $\mu\in R'$,
	\begin{equation}
		\label{eq:addlimitconeinR}
		\mu + {}_+\mathfrak{a}^\ast \subseteq
		\mu + \mathcal{L}_\Gamma^\star\subseteq
		R'
	\end{equation}
	where
	\[
		{}_+\mathfrak{a}^\ast = \{\mu\in \mathfrak{a}^\ast\colon \mu \geq 0 \text{ on } \mathfrak{a}_+\}=
		\sum_{\alpha\in \Pi} \R_{\geq 0} \alpha.
	\]
	\begin{remark}
		Let $\psi_\Gamma$ be the growth indicator function (see e.g.~\cite[Eq.~(1.1)]{KMO24}).
		For $\lambda \in \operatorname{int}\mathcal{L}_\Gamma^\star$ \cite[Lemma~III.3.1]{Qui02}
		or if $\psi_\Gamma$ is concave \cite[Thm.~2.5]{KMO24}
		one has
		\[
			\lambda\in R \iff \psi_\Gamma \leq \lambda.
		\]
		Thus $R$ coincides with
		\begin{equation}
			\label{eq:defRbygif}
			\{\lambda\in \mathcal{L}_\Gamma^\star \colon \psi_\Gamma \leq \lambda\}
		\end{equation}
		in $\operatorname{int} \mathcal{L}_\Gamma^\star$.
		To avoid some technicalities in \cite{limitcone}
		it seems to be better to work with \eqref{eq:defRbygif} as the definition for $R$.
		Similarly, $\delta'_\lambda$ is defined as
		$\inf\{t\in \R \colon \psi_\Gamma \leq t \lambda +\rho\}$ there.
		Here, we only consider $R'$ inside $\operatorname{int} \mathcal{L}_\Gamma^\star$
		where the definitions are equivalent.
	\end{remark}
	Finally, we recall the definition of $\mu_\Gamma\in \mathfrak{a}^\ast_+$
	as is used in \cite{LWW, limitcone}.
	It is unique element in $\mathfrak{a}^\ast_+$ satisfying
	\[
		\bigcap \{\operatorname{conv}(W \lambda) \colon
			\lambda\in \mathfrak{a}^\ast \text{ with }
		\Re \wt \sigma \subseteq \operatorname{conv}(W\lambda)\}
		= \operatorname{conv}(W \mu_\Gamma).
	\]
	We note that $\iota\mu_\Gamma =\mu_\Gamma$
	as well as
	$\wt \sigma \subseteq i \mathfrak{a}^\ast$ if and only if $\mu_\Gamma=0$.
	We refer to Figure~\ref{fig:mugamma} for a visualization
	of the definition for $\mu_\Gamma$.

	\begin{figure}
		\centering

		\includegraphics[width=\textwidth,trim=0cm 0.9cm 0.2cm 0.8cm, clip]
		{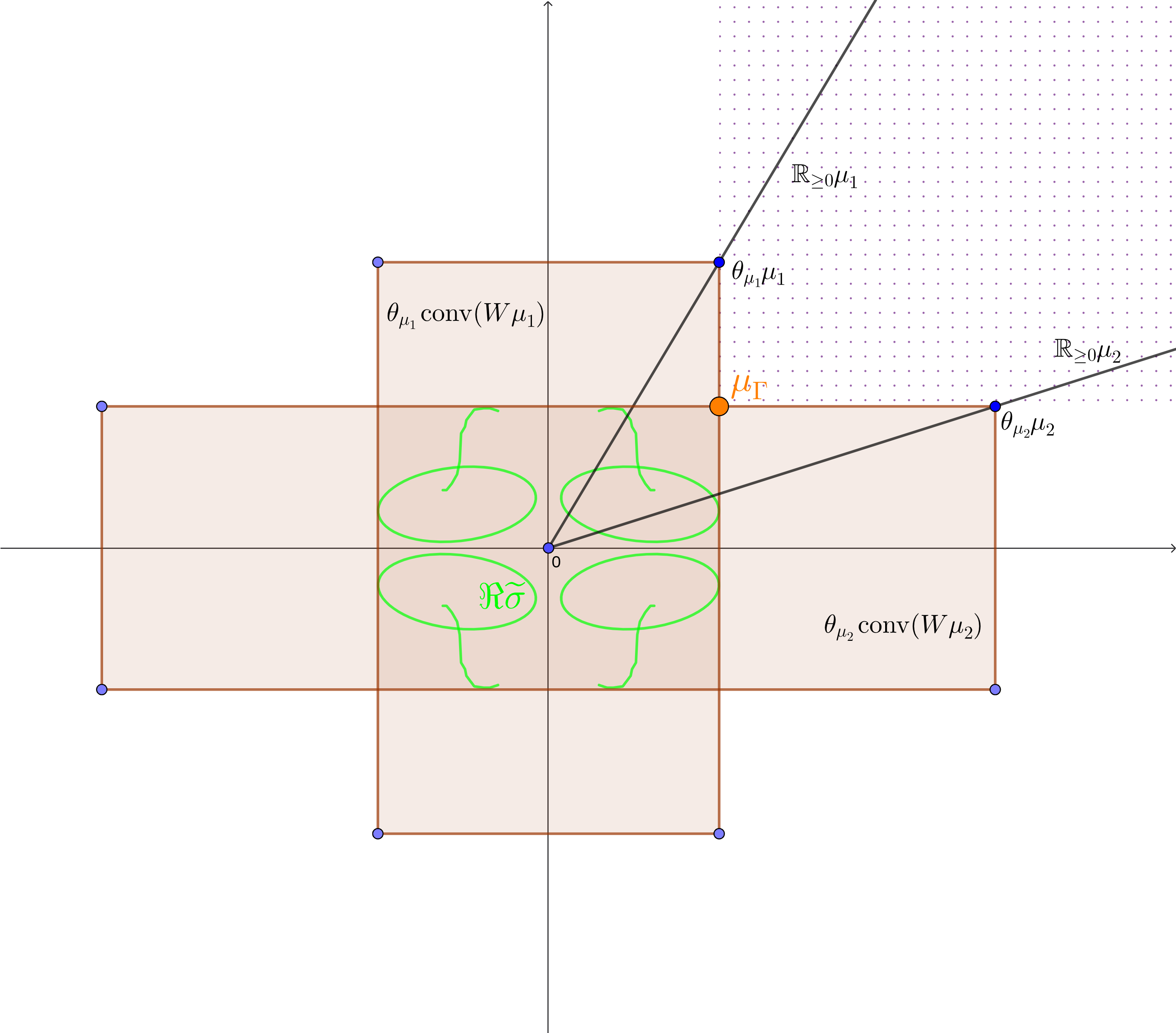}

		\caption{Definition of $\mu_\Gamma$ by the property that 
			$\operatorname{conv}(W\mu_\Gamma)$ is the smallest $W$-invariant
			convex hull that contains $\Re \wt \sigma$
			here depicted for the root system $A_1\times A_1$ where the Weyl group
			is generated by the reflections across the axes.
			$\theta_\mu \mu$ is the smallest element in $\R_{\geq 0} \mu$
			such that $\Re \wt \sigma$
			is contained in $\theta_\mu \operatorname{conv}(W\mu)$.
			Thus, the dotted region is the set $\{\mu \in \mathfrak{a}_+ \colon \Re \wt \sigma \subseteq \operatorname{conv}(W\mu)\}$.
		One should compare this to \eqref{eq:R'bymuGamma}.}
		\label{fig:mugamma}

	\end{figure}
	\subsection{Completion of the proof for Theorem~\ref{thmA}}%
	\label{sub:Completion of the proof for thmA}
	Recall that we showed
	\[
		\Re \wt \sigma \subseteq \max(0,\delta'_\lambda)
		\operatorname{conv}(W \lambda)
	\]
	or equivalently
	\[
		\theta_\lambda \leq \max(0,\delta'_\lambda)
	\]
	for $\lambda \in \mathfrak{a}_+^\ast$
	with $-\lambda \in W\lambda$
	in Sections~\ref{sec:analysisonGK}, \ref{sec:Separating tubes by invariant polynomials}, and \ref{sec:Spectralonlocally}
	if $\mathcal{L}_\Gamma \subseteq \mathfrak{a}_{++}$
	by analytical methods.
	In this section we will show the optimality of this result,
	i.e.~
	\begin{equation}
		\label{eq:optimality}
		\theta_\lambda = \max(0,\delta'_\lambda)
		\quad \text{for all }\lambda\in  \mathfrak{a}_+^\ast \text{ with } -\lambda \in W\lambda
	\end{equation}
	if $\Gamma$ is Zariski dense.
	This will finish our new proof of Theorem~\ref{thmA}.

	Let us first show that it suffices to see \eqref{eq:optimality}
	for $\mu_\Gamma$.

	\begin{proof}[Proof of \eqref{eq:optimality} from
		$\theta_{\mu_\Gamma}=\delta'_{\mu_\Gamma}$]
		Let us assume $\mu_\Gamma \neq 0$ so that $\delta_\lambda' >0$ for all $\lambda\in \mathfrak{a}_+^\ast$
		and $1 = \theta_{\mu_\Gamma} = \delta_{\mu_\Gamma}'$.
		In particular,
		$\mu_\Gamma \in R'$.
		By \eqref{eq:addlimitconeinR}
		\[
			\mu_\Gamma + {}_+\mathfrak{a}^\ast \subseteq R' = \bigcup_{\lambda\in \mathcal{L}_\Gamma^\star}
			[\delta'_\lambda, \infty) \lambda.
		\]
		On the other hand,
		for $\lambda\in \mathfrak{a}_+^\ast$,
		\[
			\operatorname{conv}(W \mu_\Gamma) \subseteq
			\theta_\lambda \operatorname{conv}(W \lambda)
		\]
		implies
		\[
			\theta_\lambda \lambda \in \mu_\Gamma + {}_+\mathfrak{a}^\ast
		\]
		by \cite[Lemma~IV.8.3]{gaga}.
		Thus, 
		\begin{align}
			\label{eq:R'bymuGamma}
			\mathfrak{a}^\ast_{+,\iota} \cap (\mu_\Gamma + {}_+ \mathfrak{a}^\ast)
			\subseteq
			\mathfrak{a}^\ast_{+,\iota} \cap R'
			\subseteq
			\bigcup_{\lambda\in \mathfrak{a}^\ast_{+,\iota}} [\delta'_\lambda,\infty) \lambda
			\subseteq
			\bigcup_{\lambda\in \mathfrak{a}^\ast_{+,\iota}} [\theta_\lambda,\infty) \lambda
			\subseteq
			\mathfrak{a}^\ast_{+,\iota} \cap (\mu_\Gamma + {}_+ \mathfrak{a}^\ast)
		\end{align}
		and we must have equality everywhere.
		This proves $\theta_\lambda = \delta'_\lambda$
		for all $\lambda\in 
		\mathfrak{a}^\ast_{+,\iota} \cap (\mu_\Gamma + {}_+ \mathfrak{a}^\ast)
		$
		from $\delta'_{\mu_\Gamma}=\theta_{\mu_\Gamma} = 1$.
	\end{proof}

	Let us now show that $\delta'_{\mu_\Gamma}=1$ to complete
	the new proof of Theorem~\ref{thmA}.
	For this we have to assume that $\Gamma$ is Zariski dense
	in the real algebraic group $G$.
	\begin{proof}
		[Proof of $\theta_{\mu_\Gamma} = \delta'_{\mu_\Gamma}$
		assuming $\Gamma$ is Zariski dense]
		The additional ingredient that we will use is a sharp bound on the spectrum
		of the Laplace-Beltrami operator 
		obtained by Anker and Zhang \cite{AZ22}.
		They showed
		\[
			\min (_\Gamma \Delta) = \|\rho\|^2 - \max (0,\delta')^2
		\]
		where
		\[
			\delta' \coloneqq \inf \left\{s\in \R\colon
			\sum_{\gamma\in \Gamma}^{} e^{-s\|\mu_+(\gamma)\|} <\infty\right\},
		\]
		see \cite{WZ23}.
		In contrast to Theorem~\ref{thm:spectrumsinglesa}
		this is a sharp bound on the spectrum.
		By \cite[Prop.~3.4]{limitcone} there is a unique normalized
		$\mu_\Delta \in \mathfrak{a}_+^\ast$
		such that $\delta' = \delta'_{\mu_\Delta}$.
		We can now copy the proof of \cite[Prop.~3.5]{limitcone}
		to get $\delta'_{\mu_\Delta} \mu_\Delta \in \wt \sigma$.
		We thus have
		\begin{align*}
			\delta'_{\mu_\Delta}\mu_\Delta \in \operatorname{conv}(W \mu_\Gamma)
			\subseteq
			\theta _{\mu_\Delta} \operatorname{conv}(W \mu_\Delta)
			\subseteq
			\delta'_{\mu_\Delta} \operatorname{conv}(W\mu_\Delta)
		\end{align*}
		by definition of $\mu_\Gamma$
		and $\theta _{\mu_\Delta} \leq \delta'_{\mu_\Delta}$.
		We infer
		$\mu_\Gamma \in W (\delta' _{\mu_\Delta}\mu_\Delta)$
		and $\delta'_{\mu_\Delta} = \theta _{\mu_\Delta}$
		which implies
		$\delta'_{\mu_\Gamma} =\theta_{\mu_\Gamma}$.
	\end{proof}

	To conclude this section let us formulate \eqref{eq:R'bymuGamma} as an independent result.
	\begin{proposition}
		\label{prop:R'inposchamber}
		\[
			\mathfrak{a}^\ast_{+,\iota} \cap R'
			=
			\mathfrak{a}^\ast_{+,\iota} \cap (\mu_\Gamma + {}_+ \mathfrak{a}^\ast)
		\]
	\end{proposition}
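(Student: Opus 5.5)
The identity is already contained in the chain of inclusions \eqref{eq:R'bymuGamma}, so the plan is to re-establish each link of that chain under the standing assumptions of this section. Those assumptions are: $\Gamma$ Zariski dense, and Theorem~\ref{thmA} available in the form $\theta_\lambda=\max(0,\delta'_\lambda)$ for $\lambda\in\mathfrak{a}^\ast_{+,\iota}$. The latter applies because, for $\lambda\in\mathfrak{a}^\ast_+$, the condition $\iota\lambda=\lambda$ is exactly $-\lambda=w_0\lambda\in W\lambda$.

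I would first treat the degenerate case $\mu_\Gamma=0$. Then $\wt\sigma\subseteq i\mathfrak{a}^\ast$, so $\theta_\lambda=0$ for all $\lambda$. Theorem~\ref{thmA} then gives $\delta'_\lambda\le 0$, so $0\cdot\lambda$ lies in the closure of convergent points, i.e.\ $0\in R'$. By \eqref{eq:addlimitconeinR} we get ${}_+\mathfrak{a}^\ast\subseteq R'$, which is the inclusion "$\supseteq$". The reverse inclusion is trivial here, because $\mathfrak{a}^\ast_{+}\subseteq{}_+\mathfrak{a}^\ast$: positive multiples of dominant weights are nonnegative combinations of simple roots.

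For $\mu_\Gamma\neq 0$, I would normalize so that $\theta_{\mu_\Gamma}=1$; this holds by the definition of $\mu_\Gamma$, since $\operatorname{conv}(W\mu_\Gamma)$ is the minimal hull. Theorem~\ref{thmA} then gives $\delta'_{\mu_\Gamma}=1$, hence $\mu_\Gamma\in R'$, and \eqref{eq:addlimitconeinR} yields
\[
\mathfrak{a}^\ast_{+,\iota}\cap(\mu_\Gamma+{}_+\mathfrak{a}^\ast)\subseteq\mathfrak{a}^\ast_{+,\iota}\cap R'.
\]
For the converse, I take $\nu\in\mathfrak{a}^\ast_{+,\iota}\cap R'$ and write $\nu=t\lambda$ with $\lambda\in\mathfrak{a}^\ast_{+,\iota}$ and $t\ge\delta'_\lambda$, using $R'=\bigcup[\delta'_\lambda,\infty)\lambda$. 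The point needing care is that this representation can be chosen with $\lambda$ in the $\iota$-fixed cone itself; it can, because one may simply take $\lambda=\nu/\|\nu\|$. Since $\theta_\lambda=\max(0,\delta'_\lambda)$ by Theorem~\ref{thmA}, we get $t\ge\theta_\lambda$.

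Next I use $\Re\wt\sigma\subseteq\theta_\lambda\operatorname{conv}(W\lambda)$. By the minimality defining $\mu_\Gamma$, this gives $\operatorname{conv}(W\mu_\Gamma)\subseteq\theta_\lambda\operatorname{conv}(W\lambda)\subseteq t\operatorname{conv}(W\lambda)$; the last inclusion holds because $0\in\operatorname{conv}(W\lambda)$, so these hulls increase with the scaling. Applying \cite[Lemma~IV.8.3]{gaga} to the dominant weights $\mu_\Gamma$ and $t\lambda$ then gives $t\lambda-\mu_\Gamma\in{}_+\mathfrak{a}^\ast$, so $\nu\in\mu_\Gamma+{}_+\mathfrak{a}^\ast$.

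The main obstacle I anticipate is the closure in the definition of $R'$ together with the edge case $\delta'_\lambda\le 0$. I would handle it by noting that both sides of the identity are closed. Moreover, when $\delta'_\lambda<0$ the hull $\theta_\lambda\operatorname{conv}(W\lambda)$ degenerates to $\{0\}$, which forces $\mu_\Gamma=0$, and that case was settled above.
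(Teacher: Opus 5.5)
Your proposal is correct and follows the paper's own argument, namely the chain \eqref{eq:R'bymuGamma}. One inclusion comes from $\delta'_{\mu_\Gamma}=\theta_{\mu_\Gamma}=1$ together with \eqref{eq:addlimitconeinR}. The other comes from $\theta_\lambda\le\max(0,\delta'_\lambda)$, the minimality of $\operatorname{conv}(W\mu_\Gamma)$ and \cite[Lemma~IV.8.3]{gaga}; your separate treatment of $\mu_\Gamma=0$ just spells out what the paper states in a remark.

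The only point you leave unaddressed is $\nu=0$ when $\mu_\Gamma\neq 0$, where $\nu/\|\nu\|$ is undefined. That case cannot occur: $0\in R'$ would give $\varepsilon\rho\in R'$ for all $\varepsilon>0$ by \eqref{eq:addlimitconeinR}. Theorem~\ref{thmA} would then give $\theta_\rho=0$, hence $\mu_\Gamma=0$.
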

	We remark that Proposition~\ref{prop:R'inposchamber} also holds for $\mu_\Gamma =0$
	and we need neither $\mathcal{L}_\Gamma \subseteq \mathfrak{a}_{++}$
	nor that $\Gamma$ is Zariski dense
	if we use Theorem~\ref{thmA} directly in the proof.

	\subsection{Precise bounds on the domain of convergence}%
	\label{sub:Precise bounds on the domain of convergence}

	In this section we use Proposition~\ref{prop:R'inposchamber} and
	the convexity of $R$ (Lemma~\ref{la:Rconvex}) to bound $R'$ outside of $\mathfrak{a}^{\ast}_+$.
	The result we obtain is the following.
	\begin{proposition}
		\label{prop:upperboundR'}
		Let $I \coloneqq \{\alpha\in \Pi\colon \langle \alpha,\mu_\Gamma\rangle >0\}$.
		Then,
		\[
			R' \cap \mathfrak{a}^{\ast}_\iota \subseteq
			\mu_\Gamma + \sum_{\alpha\in I} \R_{\geq 0} \alpha + \sum_{\alpha\notin I} \R \alpha
			.
		\]
	\end{proposition}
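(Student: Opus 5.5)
The plan is to argue by convexity. We move from $\mu_\Gamma$ a short distance towards a given $\nu\in R'\cap\mathfrak{a}^\ast_\iota$, and then push the resulting point back into the positive chamber by adding a small correction. The correction is supported only on the simple roots \emph{not} in $I$, so it does not affect the coefficients we want to control. Proposition~\ref{prop:R'inposchamber} then gives those coefficients.

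Fix $\nu\in R'\cap\mathfrak{a}^\ast_\iota$ and write $\nu-\mu_\Gamma=\sum_{\alpha\in\Pi}c_\alpha\alpha$. We must show $c_\alpha\geq 0$ for $\alpha\in I$. Put $J\coloneqq\Pi\setminus I$. By definition of $I$, $\langle\beta,\mu_\Gamma\rangle=0$ for $\beta\in J$ and $\langle\alpha,\mu_\Gamma\rangle>0$ for $\alpha\in I$.

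\emph{Step 1.} For small $t\in(0,1]$ set $x_t\coloneqq\mu_\Gamma+t(\nu-\mu_\Gamma)$. Since $\mu_\Gamma\in R'$ (with $\delta'_{\mu_\Gamma}=1$ when $\mu_\Gamma\neq0$, and trivially from Proposition~\ref{prop:R'inposchamber} otherwise), Lemma~\ref{la:Rconvex} gives $x_t\in R'$. Moreover $x_t\in\mathfrak{a}^\ast_\iota$, because $\iota\mu_\Gamma=\mu_\Gamma$ and $\iota\nu=\nu$.

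\emph{Step 2.} We look for a correction $\delta_t=\sum_{\beta\in J}d_\beta\beta$ with $d_\beta\geq0$. We prescribe $\langle\beta',\delta_t\rangle=\max\bigl(0,-t\langle\beta',\nu-\mu_\Gamma\rangle\bigr)$ for $\beta'\in J$. This is a linear system whose matrix is the Gram (Cartan-type) matrix of the subsystem $J$. That matrix is positive definite, and its inverse has nonnegative entries (a standard fact for simple systems). Hence a solution with $d_\beta\geq0$ exists and $|\delta_t|=O(t)$. Since $\mu_\Gamma$ is $\iota$-fixed, $\iota$ permutes $\Pi$ preserving $I$ and $J$. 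The right-hand side is $\iota$-symmetric, so by uniqueness $\iota\delta_t=\delta_t$.

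Set $y_t=x_t+\delta_t$. Then:
\begin{enumerate}
\item $y_t\in R'$ by \eqref{eq:addlimitconeinR}, since $\delta_t\in{}_+\mathfrak{a}^\ast$;
\item $y_t\in\mathfrak{a}^\ast_\iota$;
\item $\langle\beta',y_t\rangle\geq0$ for $\beta'\in J$, by construction;
\item $\langle\alpha,y_t\rangle=\langle\alpha,\mu_\Gamma\rangle+O(t)>0$ for $\alpha\in I$ and $t$ small.
\end{enumerate}
Thus $y_t\in\mathfrak{a}^\ast_{+,\iota}\cap R'$.

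\emph{Step 3.} Proposition~\ref{prop:R'inposchamber} gives $y_t-\mu_\Gamma\in{}_+\mathfrak{a}^\ast$. The coefficient of $\alpha\in I$ in $y_t-\mu_\Gamma=t(\nu-\mu_\Gamma)+\delta_t$ is exactly $tc_\alpha$, because $\delta_t$ is supported on $J$. Hence $c_\alpha\geq0$, which is the claim.

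The main obstacle is Step~2: producing a nonnegative, $\iota$-invariant correction supported on $J$ whose size is $O(t)$. This relies on the inverse Cartan matrix of the subsystem $J$ having nonnegative entries. A further point to check is that the off-diagonal terms $\langle\alpha,\beta\rangle\leq0$ for $\alpha\in I$, $\beta\in J$ cannot destroy positivity on $I$; this holds because they contribute only $O(t)$.
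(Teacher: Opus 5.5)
Your proof is correct and follows essentially the same route as the paper. Both arguments use the convexity of $R'$ together with a correction that has nonnegative coefficients, is supported on $\Pi\setminus I$, and pushes the point into $\mathfrak{a}^\ast_{+,\iota}$; both then apply Proposition~\ref{prop:R'inposchamber} and read off the coefficients on $I$. The only difference is the order of the two moves: the paper first replaces $\mu_\Gamma$ by the regular point $\mu_\Gamma+\varepsilon\mu_I$ and only then takes the convex combination with $\lambda$. Here $\langle\mu_I,\beta\rangle=1$ for $\beta\notin I$, and $\mu_I\in{}_+\mathfrak{a}^\ast$ rests on the same nonnegativity of the inverse Gram matrix you invoke. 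This lets the paper use a single fixed correction instead of your $t$-dependent $\delta_t$.
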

	Let us shortly describe the idea in the case that $I=\Pi$,
	see Figure~\ref{fig:convexityargument}.
	For $\lambda\in R'\cap \mathfrak{a}_\iota^\ast$
	we consider the convex combination $\lambda_t \coloneqq\mu_\Gamma + t(\lambda-\mu_\Gamma)$ with $t\in [0,1]$.
	For $t\ll 1$, $\lambda_t$ is close to $\mu_\Gamma$
	and thus $\lambda_t \in \mathfrak{a}^\ast_{+,\iota}$.
	Proposition~\ref{prop:R'inposchamber} exactly determines $R'\cap \mathfrak{a}^\ast _{+,\iota}$
	and we get $\lambda_t \in \mu_\Gamma + {}_+ \mathfrak{a}^\ast$.
	Hence,
	\[
		\lambda -\mu_\Gamma \in {}_+ \mathfrak{a}^\ast
		=
		\sum_{\alpha\in \Pi} \R_{\geq 0} \alpha.
	\]

	\begin{figure}
		\centering
		\includegraphics[width=\textwidth,trim=0cm 1.8cm 2cm 1cm, clip]{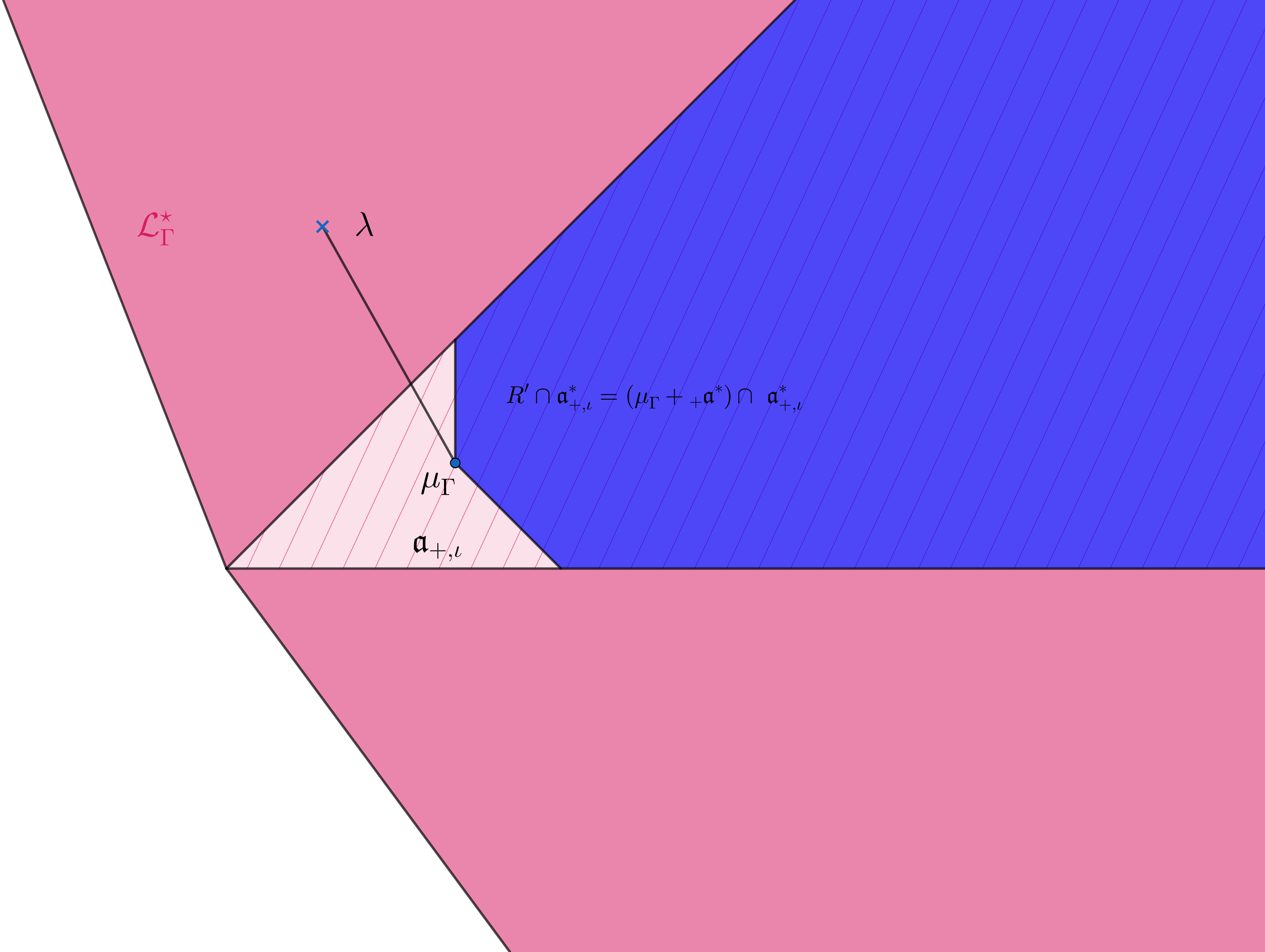}
		\caption{Visualization of the convexity argument used in Proposition~\ref{prop:upperboundR'}
			for the case that $\mu_\Gamma$ is regular.
			The whole line connecting $\lambda$ and $\mu_\Gamma$
			must be contained in $R'$.
			However, $R'$ in $\mathfrak{a}^\ast _{+,\iota}$ is given by the blue region.
		This gives a contradiction if $\lambda\notin \mu_\Gamma + {}_+ \mathfrak{a}^\ast$ and $\lambda\in \mathfrak{a}^\ast_\iota$.}
		\label{fig:convexityargument}
	\end{figure}
	For the general case this argument has to be refined in order to ensure $\lambda_t\in \mathfrak{a}^\ast_+$ for $t\ll 1$.

	\begin{proof}
		Let $\lambda = \iota \lambda\in R'$.
		We fix $\mu_I \in \mathfrak{a}^\ast$ such that
		$\mu_I(v_\alpha)=0$ for all $\alpha\in I$ and
		$\langle \mu_I,\beta\rangle =1$ for all $\beta\notin I$.
		This is possible as $v_\alpha$, $\alpha\in I$, together with $\Pi\setminus I$
		is a basis.
		We have $\mu_I(v_\beta)>0$ for $\beta\notin I$
		so that $\mu_I\in {}_+ \mathfrak{a}^\ast$.
		From this we infer $\mu_\Gamma + \varepsilon \mu_I\in R'$
		by \eqref{eq:addlimitconeinR}.
		Furthermore, $\mu_\Gamma +\varepsilon \mu_I \in \mathfrak{a}_{++}^\ast$
		for small $\varepsilon>0$.
		Since $I$ is $\iota$-invariant we also have $\mu_\Gamma +\varepsilon \mu_I \in \mathfrak{a}^{\ast}_{+,\iota}$.
		We now use the convexity of $R'$ to find
		\[
			t\lambda +(1-t)(\mu_\Gamma + \varepsilon \mu_I) \in R' \cap \mathfrak{a}^{\ast,\mathrm{Her}}_+
		\]
		for small $t>0$.
		The positivity follows because $\mu_\Gamma + \varepsilon \mu_I$ is in the interior of the positive chamber,
		and the $\iota$-invariance follows from the invariance of $\lambda$ and $\mu_\Gamma + \varepsilon \mu_I$.
		Proposition~\ref{prop:R'inposchamber} implies, for all $\alpha\in I$,
		\[
			\mu_\Gamma(v_\alpha) \leq 
			(
			t\lambda +(1-t)(\mu_\Gamma + \varepsilon \mu_I)
			)
			(v_\alpha)
			=\mu_\Gamma(v_\alpha) + t(\lambda(v_\alpha) - \mu_\Gamma(v_\alpha)).
		\]
		Dividing by $t$ gives
		\begin{equation}
			\label{eq:positivecoeff}
			\mu_\Gamma(v_\alpha) \leq \lambda(v_\alpha).
		\end{equation}
		The proposition follows by writing
		\[
			\lambda - \mu_\Gamma = \sum_{\alpha\in \Pi} c_\alpha \alpha 
		\]
		as \eqref{eq:positivecoeff} yields $c_\alpha \geq 0$ for $\alpha\in I$.
	\end{proof}
	Proposition~\ref{prop:upperboundR'} has some interesting consequences.
	Together with \eqref{eq:addlimitconeinR} it
	gives the following corollary.
	\begin{corollary}
		\label{cor:inclusion}
		\begin{align*}
			\mathcal{L}_\Gamma^\star \cap \mathfrak{a}_\iota^\ast
			\subseteq
			\sum_{\alpha\in I} \R_{\geq 0} \alpha + \sum_{\alpha\notin I} \R \alpha
		\end{align*}
		and by duality
		\begin{equation}
			\label{eq:inclinlimitcone}
			\sum_{\alpha\in I} \R_{\geq 0} (v_\alpha + v _{\iota\alpha})
			\subseteq
			\operatorname{conv}(\mathcal{L}_\Gamma).
		\end{equation}
		In particular, 
		\begin{equation}
			\label{eq:limitconeintersectsCalpha}
			C_\alpha \cap \mathcal{L}_\Gamma \neq \{0\}
			\qquad \forall \alpha\in I
			,
		\end{equation}
		where \[
			C_\alpha=\mathfrak{a}_+ \cap \bigcap_{\beta\in \Pi\setminus\{\alpha,\iota\alpha\}} \ker \beta
		= \R_{\geq 0} v_\alpha + \R_{\geq 0} v_{\iota \alpha}\]
		for $\alpha\in \Pi$.
	\end{corollary}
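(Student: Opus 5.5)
We prove the three statements of Corollary~\ref{cor:inclusion} in order: first the cone inclusion for $\mathcal{L}_\Gamma^\star\cap\mathfrak{a}_\iota^\ast$, then \eqref{eq:inclinlimitcone} by duality, then \eqref{eq:limitconeintersectsCalpha}.

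For the first inclusion, let $\nu\in\mathcal{L}_\Gamma^\star\cap\mathfrak{a}_\iota^\ast$. Since $\mu_\Gamma\in R'$ (it bounds $\Re\wt\sigma$ optimally, and Theorem~\ref{thmA} gives $\delta'_{\mu_\Gamma}\le 1$ after normalization), \eqref{eq:addlimitconeinR} gives $\mu_\Gamma+s\nu\in R'$ for every $s\ge0$. Because $\iota\mu_\Gamma=\mu_\Gamma$ and $\iota\nu=\nu$, this element lies in $R'\cap\mathfrak{a}_\iota^\ast$. Proposition~\ref{prop:upperboundR'} then yields
\[
s\nu\in\sum_{\alpha\in I}\R_{\ge0}\alpha+\sum_{\alpha\notin I}\R\alpha .
\]
This set is a cone, so dividing by $s>0$ gives the claim. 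If $\mu_\Gamma=0$, then $I=\emptyset$ and the statement is trivial.

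For \eqref{eq:inclinlimitcone} we dualize. Let $\{v_\alpha\}$ be the basis of $\mathfrak{a}$ dual to $\Pi$, and note that $\iota v_\alpha=v_{\iota\alpha}$. The dual cone of $\operatorname{conv}(\mathcal{L}_\Gamma)$ is $\mathcal{L}_\Gamma^\star$, and the bidual of a closed convex cone is the cone itself; $\mathcal{L}_\Gamma$ is closed by definition. By the bipolar theorem it therefore suffices to show $\nu(v_\alpha+v_{\iota\alpha})\ge0$ for every $\nu\in\mathcal{L}_\Gamma^\star$ and $\alpha\in I$. The cone $\mathcal{L}_\Gamma^\star$ is $\iota$-invariant, since $\mathcal{L}_\Gamma$ is $\iota$-invariant: $\mu_+(\gamma^{-1})=\iota\mu_+(\gamma)$. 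So the element $\nu^\circ=\tfrac12(\nu+\iota\nu)$ lies in $\mathcal{L}_\Gamma^\star\cap\mathfrak{a}_\iota^\ast$. Expanding $\nu^\circ$ in the basis $\Pi$, the coefficient of $\alpha\in I$ is $\nu^\circ(v_\alpha)\ge0$ by the first part. Hence
\[
\nu(v_\alpha+v_{\iota\alpha})=\nu(v_\alpha)+(\iota\nu)(v_\alpha)=2\nu^\circ(v_\alpha)\ge0 .
\]

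For \eqref{eq:limitconeintersectsCalpha}, take $\alpha\in I$. By \eqref{eq:inclinlimitcone}, the vector $u=v_\alpha+v_{\iota\alpha}$ is a convex combination of finitely many $\ell_j\in\mathcal{L}_\Gamma\subseteq\mathfrak{a}_+$. For $\beta\in\Pi\setminus\{\alpha,\iota\alpha\}$ we have $\beta(u)=0$, while $\beta(\ell_j)\ge0$ for each $j$. Hence $\beta(\ell_j)=0$ for every $j$ with positive weight. Each such $\ell_j$ therefore lies in $C_\alpha$, and they cannot all vanish because $u\neq0$. This gives $C_\alpha\cap\mathcal{L}_\Gamma\neq\{0\}$.

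The main point needing care is the duality step: we must ensure the closedness and $\iota$-invariance of the cones so that the bipolar theorem applies and the symmetrization $\nu\mapsto\nu^\circ$ stays inside $\mathcal{L}_\Gamma^\star$.
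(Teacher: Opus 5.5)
Your proof is correct and follows the paper's route. The first inclusion comes from Proposition~\ref{prop:upperboundR'} combined with \eqref{eq:addlimitconeinR}, then duality, then the face argument for $C_\alpha$; you also usefully make explicit, via the $\iota$-symmetrization $\nu\mapsto\nu^\circ$, the duality step that the paper leaves as ``by duality''.

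One justification needs correcting: closedness of $\mathcal{L}_\Gamma$ alone does not imply that $\operatorname{conv}(\mathcal{L}_\Gamma)$ is closed, since this can fail for non-pointed closed cones. What makes it work here is that $\mathcal{L}_\Gamma\subseteq\mathfrak{a}_+$ and $\rho>0$ on $\mathfrak{a}_+\setminus\{0\}$. Hence $\operatorname{conv}(\mathcal{L}_\Gamma)=\mathbb{R}_{\geq 0}\operatorname{conv}(\mathcal{L}_\Gamma\cap\{\rho=1\})$ is the cone over a compact convex set avoiding $0$, and is therefore closed.
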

	\begin{proof}
		The only part to explain is that $v_\alpha +v _{\iota\alpha}\in \operatorname{conv}(\mathcal{L}_\Gamma)$
		implies $C_\alpha \cap \mathcal{L}_\Gamma \neq \{0\}$.
		However, this is true since the $v_\alpha$ are the extremal vertices
		of $\mathfrak{a}_+$ and $\mathcal{L}_\Gamma \subseteq \mathfrak{a}_+$.
	\end{proof}

	Let us consider the example of real rank $2$.

	\begin{example}
		Let us assume that $\iota$ is trivial
		and $G$ has real rank $2$, i.e.~$\Pi=\{\alpha,\beta\}$.
		There are essentially three cases.
		\begin{description}
			\item[The tempered case $I=\emptyset$] In this case $\mu_\Gamma=0$.
				We have
				\[
					\rho + \mathcal{L}_\Gamma^\star\subseteq R
					\subseteq\mathcal{L}_\Gamma^\star.
				\]
				There is no restriction on $\mathcal{L}_\Gamma$.
			\item[The non-tempered case $I=\{\alpha\}$]
				Since $\langle\mu_\Gamma,\beta\rangle=0$
				we must have
				$\mu_\Gamma=\delta'_{v_\alpha} v_\alpha$
				with $v_\alpha\in \mathfrak{a}^\ast_+$
				by identifying $\mathfrak{a}^\ast$ and $\mathfrak{a}$
				and $\delta'_{v_\alpha }>0.$
				Then,
				\[
					\delta'_{v_\alpha} v_\alpha + \mathcal{L}_\Gamma^\star
					\subseteq
					R'
					\subseteq
					\delta'_{v_\alpha} v_\alpha + \R _{\geq 0}\alpha + \R \beta.
				\]
				We must have $v_\alpha \in \mathcal{L}_\Gamma$.
			\item[The regular case $I=\{\alpha,\beta\}$] 
				In this case $\mu_\Gamma$ is regular.
				We can determine $R'$ completely.
				\[
					R' = \mu_\Gamma + \R_{\geq 0} \alpha + \R_{\geq 0} \beta
					.
				\]
				Moreover, $\mathcal{L}_\Gamma = \mathfrak{a}_+$.
		\end{description}
		See Figures~\ref{fig:temp}, \ref{fig:nontemp}, and \ref{fig:reg} for a visualization.
	\end{example}

	\begin{figure}
		\centering

		\includegraphics[width=\textwidth,trim=0cm 0.5cm 2cm 0.7cm, clip]{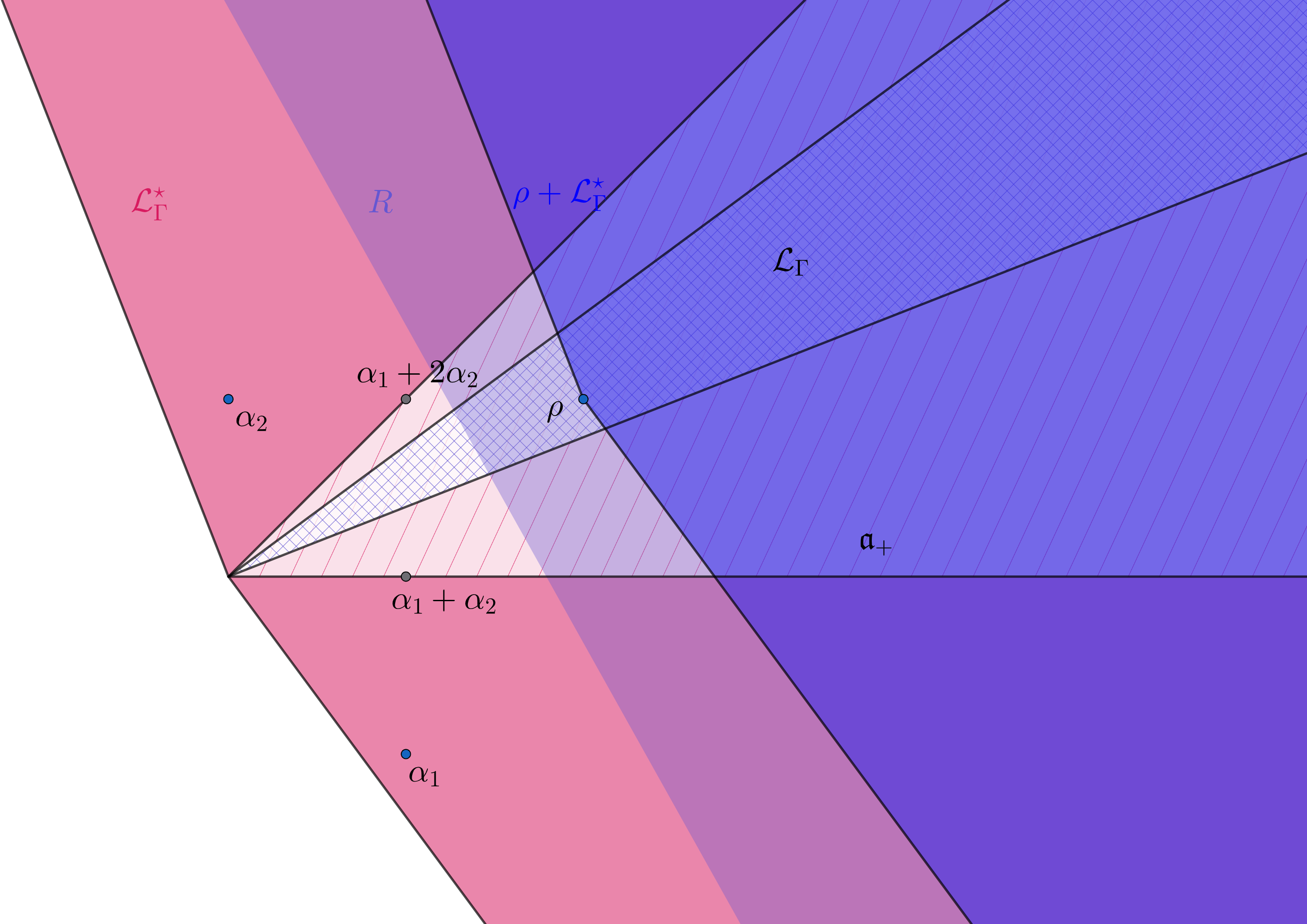}

		\caption{Tempered case}
		\label{fig:temp}

	\end{figure}
	\begin{figure}
		\centering

		\includegraphics[width=\textwidth,trim=0cm 0cm 3cm 2cm, clip]{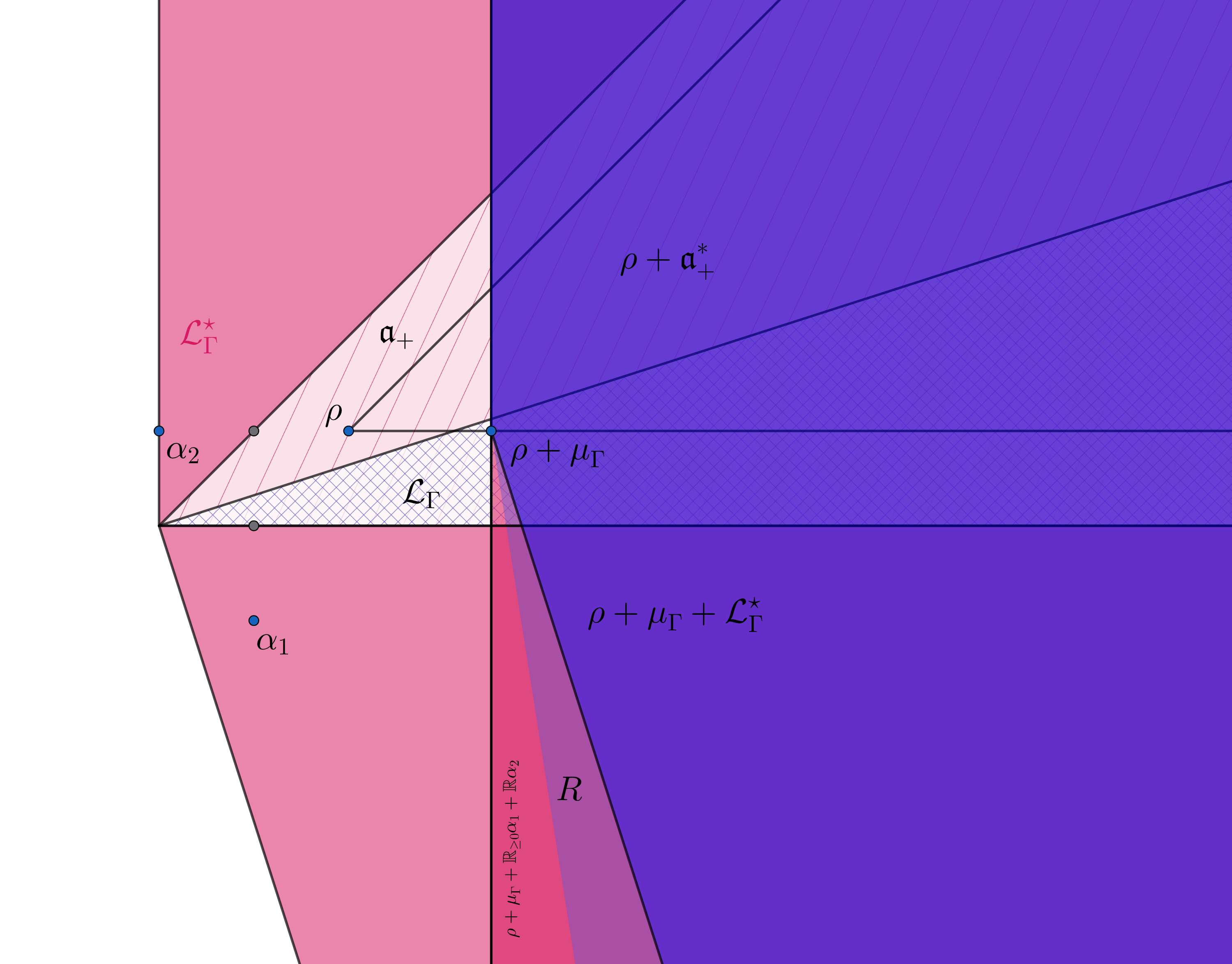}
		\caption{Non-tempered case}
		\label{fig:nontemp}

	\end{figure}
	\begin{figure}
		\centering

		\includegraphics[width=\textwidth,trim=0cm 2cm 2cm 2cm, clip]{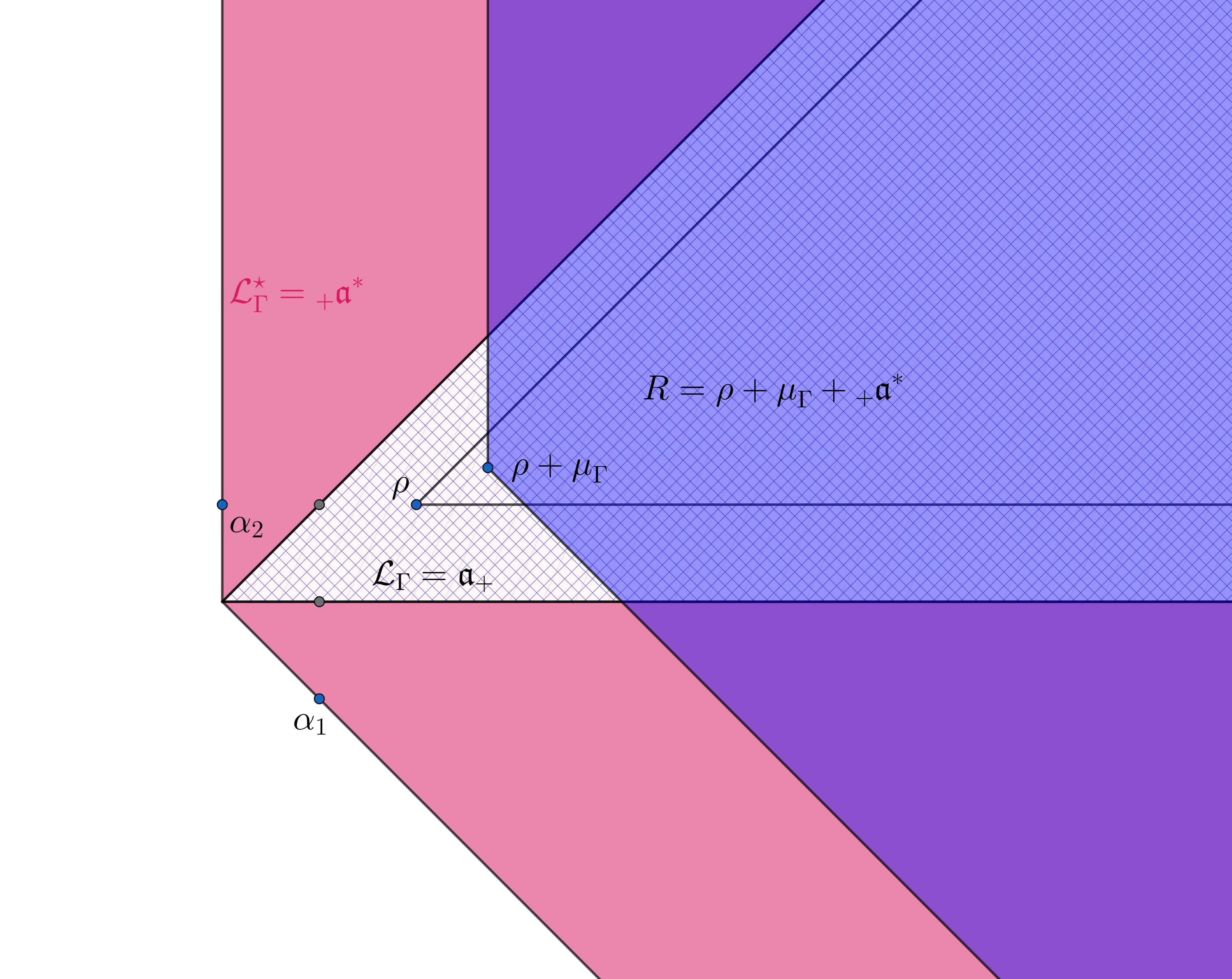}

		\caption{Regular case}
		\label{fig:reg}

	\end{figure}
	\begin{remark}
		Let us write first and third case in the usual generality.
		\begin{enumerate}
			\item $\mu_\Gamma=0$.
				Then
				\[
					\rho + \mathcal{L}_\Gamma^\star\subseteq R
					\quad\text{and}\quad
					R\cap \mathfrak{a}^\ast_\iota \subseteq\mathcal{L}_\Gamma^\star\cap \mathfrak{a}^\ast_\iota.
				\]
			\item $\mu_\Gamma$ is regular.
				Then
				\[\mu_\Gamma + {}_+\mathfrak{a}^\ast
					\subseteq
					R'
					\quad\text{and}
					\quad
					R'\cap \mathfrak{a}^\ast_\iota \subseteq  (\mu_\Gamma + {}_+\mathfrak{a}^\ast) \cap \mathfrak{a}_\iota^\ast.
				\]
				Moreover, $C_\alpha \cap \mathcal{L}_\Gamma\neq \{0\}$ 
				for all $\alpha\in \Pi$.
				Assuming that $\mathcal{L}_\Gamma$ is convex
				(e.g.~if $\Gamma$ is Zariski dense)
				then
				\[
					\mathcal{L}_\Gamma \cap \mathfrak{a}_{+,\iota}^\ast
					=
					\mathfrak{a}_{+,\iota}^\ast
					.
				\]
		\end{enumerate}
	\end{remark}

	We also remark that
	Corollary~\ref{cor:inclusion} implies the following version of \cite[Cor.~4.5(i)]{limitcone}:
	\begin{equation}
		\label{eq:onewallavoided}
		C_\alpha \cap \mathcal{L}_\Gamma =\{0\} \implies \langle\mu_\Gamma,\alpha\rangle =0.
	\end{equation}
	We also obtain Theorem~\ref{thmB} from \eqref{eq:onewallavoided}.
	Indeed, if $\mathfrak{g}$ is not one of $\mathfrak{sl}_3(\mathbb{K})$,
	$\mathbb{K}=\R,\C,\mathbb{H}$ or $\mathfrak{e}_6^{-26}$ nor of real rank one,
	then $C_\alpha$ is in the boundary of $\mathfrak{a}_+$ for all $\alpha\in \Pi$.
	Therefore $\mathcal{L}_\Gamma \subseteq \mathfrak{a}_{++}$
	implies $C_\alpha \cap \mathcal{L}_\Gamma = \{0\}$
	for all $\alpha\in \Pi$.
	By \eqref{eq:onewallavoided}, $\mu_\Gamma=0$
	which is equivalent to $\wt \sigma \subseteq i \mathfrak{a}^\ast$.

	\bibliographystyle{alpha}
	\bibliography{reference}

	\end{document}